\title{ Strongly self-absorbing \cstar-dynamical systems, II }
\author{ Gábor Szabó }
\address{Westfälische Wilhelms-Universität, Fachbereich Mathematik, \phantom{--------------}\linebreak \text{}\hspace{3.5mm} Einsteinstrasse 62, 48149 Münster, Germany}
\email{ gabor.szabo@uni-muenster.de }
\thanks{\emph{Supported by:} SFB 878 \emph{Groups, Geometry and Actions}. }
\subjclass[2010]{46L55}
\numberwithin{equation}{section}
\begin{document}

\renewcommand\matrix[1]{\left(\begin{array}{*{10}{c}} #1 \end{array}\right)}  
\newcommand\set[1]{\left\{#1\right\}}  

\newcommand{\IC}[0]{\mathbb{C}} 
\newcommand{\IN}[0]{\mathbb{N}}
\newcommand{\IQ}[0]{\mathbb{Q}} 
\newcommand{\IR}[0]{\mathbb{R}}
\newcommand{\IT}[0]{\mathbb{T}}
\newcommand{\IZ}[0]{\mathbb{Z}}

\newcommand{\CC}[0]{\mathcal{C}} 
\newcommand{\CD}[0]{\mathcal{D}}
\newcommand{\CE}[0]{\mathcal{E}} 
\newcommand{\CK}[0]{\mathcal{K}}
\newcommand{\CM}[0]{\mathcal{M}}
\newcommand{\CO}[0]{\mathcal{O}}
\newcommand{\CU}[0]{\mathcal{U}}
\newcommand{\CZ}[0]{\mathcal{Z}}

\renewcommand{\phi}[0]{\varphi}
\newcommand{\eps}[0]{\varepsilon}

\newcommand{\id}[0]{\operatorname{id}}		
\newcommand{\eins}[0]{\mathbf{1}}			
\newcommand{\ad}[0]{\operatorname{Ad}}
\newcommand{\ev}[0]{\operatorname{ev}}
\newcommand{\fin}[0]{{\subset\!\!\!\subset}}
\newcommand{\Hom}[0]{\operatorname{Hom}}
\newcommand{\Aut}[0]{\operatorname{Aut}}
\newcommand{\dimrok}[0]{\dim_{\mathrm{Rok}}}
\newcommand{\dimrokc}[0]{\dim_{\mathrm{Rok}}^{\mathrm{c}}}
\newcommand{\dst}[0]{\displaystyle}
\newcommand{\cstar}[0]{$\mathrm{C}^*$}
\newcommand{\dist}[0]{\operatorname{dist}}
\newcommand{\ann}[0]{\operatorname{Ann}}
\newcommand{\cc}[0]{\simeq_{\mathrm{cc}}}
\newcommand{\scc}[0]{\simeq_{\mathrm{scc}}}

\newcommand{\cf}[2]{cf.\ {\cite[#2]{#1}}}
\renewcommand{\see}[2]{see {\cite[#2]{#1}}}

\newtheorem{satz}{Satz}[section]		
\newtheorem{cor}[satz]{Corollary}
\newtheorem{lemma}[satz]{Lemma}
\newtheorem{prop}[satz]{Proposition}
\newtheorem{theorem}[satz]{Theorem}
\newtheorem*{theoreme}{Theorem}

\theoremstyle{definition}
\newtheorem{defi}[satz]{Definition}
\newtheorem{nota}[satz]{Notation}
\newtheorem{rem}[satz]{Remark}
\newtheorem*{reme}{Remark}
\newtheorem{example}[satz]{Example}
\newtheorem{question}[satz]{Question}


\begin{abstract} 
This is a continuation of the study of strongly self-absorbing actions of locally compact groups on \cstar-algebras. Given a strongly self-absorbing action $\gamma: G\curvearrowright\CD$, we establish permanence properties for the class of separable \cstar-dynamical systems absorbing $\gamma$ tensorially up to cocycle conjugacy. Generalizing results of both Toms-Winter and Dadarlat-Winter, it is proved that the desirable equivariant analogues of the classical permanence properties hold in this context. For the permanence with regard to equivariant extensions, we need to require a mild extra condition on $\gamma$, which replaces $K_1$-injectivity assumptions in the classical theory. This condition turns out to be automatic for equivariantly Jiang-Su absorbing \cstar-dynamical systems, yielding a large class of examples. It is left open whether this condition is redundant for all strongly self-absorbing actions, and we do consider examples that satisfy this condition but are not equivariantly Jiang-Su absorbing.
\end{abstract}

\maketitle

\tableofcontents


\setcounter{section}{-1}

\section{Introduction}

\noindent
This is a continuation of my previous paper \cite{Szabo15ssa}, which introduced (semi-)\-strongly self-absorbing \cstar-dynamical systems and provided an equivariant McDuff-type theorem characterizing equivariant tensorial absorption of such systems, generalizing classical results of Toms-Winter \cite{TomsWinter07} and Kirchberg \cite{Kirchberg04}. The primary motivation comes from the fundamental importance of strongly self-absorbing \cstar-algebras within the Elliott classification program for simple, nuclear \cstar-algebras. Early on, the Cuntz algebras $\CO_2$ and $\CO_\infty$ have arisen as cornerstones of the classification of simple and purely infinite \cstar-algebras, known as the Kirchberg-Phillips classification; see \cite{KirchbergPhillips00, Phillips00, KirchbergC}. The method of localizing the classification of a certain class of \cstar-algebras at a strongly self-absorbing \cstar-algebra, such as the Jiang-Su algebra or a UHF algebra of infinite type, was coined by an influential paper \cite{Winter14Lin} of Winter, and has become a key method in state-of-the-art classification results such as \cite{GongLinNiu15}. It stands to reason that introducing strongly self-absorbing \cstar-dynamical systems could lead to new and significant input into the classification of group actions on \cstar-algebras. This area is of fundamental importance, but surprisingly underdeveloped considering how far the classification of group actions on von Neumann algebras has progressed in comparison. The reader is referred to the introduction of the prequel \cite{Szabo15ssa} and the references therein for a brief historical overview of the significant achievements within these subjects. The reader is in particular recommended to consult Izumi's survey article \cite{Izumi10} in order to get an idea about recent developments in these subjects.

Strongly self-absorbing \cstar-algebras have historically been looked at by example, but for the first time conceptually fleshed out by Toms-Winter in \cite{TomsWinter07}, and later enhanced by Kirchberg's work \cite{Kirchberg04} on central sequence algebras. A significant achievement of these papers was, among other things, to prove highly non-trivial permanence results for $\CD$-stable \cstar-algebras in an abstract and elegant fashion, where $\CD$ is any strongly self-absorbing \cstar-algebra. While the prequel \cite{Szabo15ssa} of this paper is primarily concerned with providing an equivariant McDuff-type theorem and discussing examples, this paper is focused on proving equivariant analogues of the permanence properties that hold in the classical context.
There are some permanence properties that follow, either directly or without too much effort, from the equivariant McDuff-type theorem and general properties of the central sequence algebra. These are treated in the first section and concern the passage to invariant hereditary subsystems, equivariant quotients, equivariantly Morita equivalent systems, or equivariant inductive limits. Some of these were in fact observed already in collaboration with Barlak \cite{BarlakSzabo15} as a consequence of more general observations about sequentially split $*$-homomorphisms between \cstar-dynamical systems. 

As is the case in the classical theory, it poses a far greater challenge to show that the absorption of a (semi-)strongly self-absorbing action is preserved under forming equivariant extensions. This is one of the main objectives of this paper. Since the proof of the classical result is already very technical, it is not surprising that the proof in the equivariant context has to be done with extra care to respect the underlying dynamical structure. Moreover, the proofs in the classical context from \cite{TomsWinter07, Kirchberg04} make use of $K_1$-injectivity in an essential way, or rather that the commutator subgroup of the unitary group of a strongly self-absorbing \cstar-algebra is in the connected component of the unit. This later turned out to be redundant due to Winter's result \cite{Winter11} that every strongly self-absorbing \cstar-algebra is $\CZ$-stable. An equivariant analogue of this unitary commutator condition, which we name unitary regularity, plays a key role from the second section onward. A $G$-action $\alpha$ on a unital \cstar-algebra $A$ is called unitarily regular, if for two unitaries $u,v\in A$, which are approximately fixed by $\alpha$ on a large compact subset of $G$, the unitary commutator $uvu^*v^*\in A$ can be connected to the unit $\eins_A$ by a continuous path that goes entirely through unitaries that are approximately fixed by $\alpha$ on that compact set. As it turns out, a semi-strongly self-absorbing $G$-action $\gamma$ is unitarily regular precisely when the class of all separable, $\gamma$-absorbing $G$-\cstar-dynamical systems is closed under equivariant extensions; see Theorem \ref{ext}. 

Along the way, we will also prove an equivariant analogue of an important result of Dadarlat-Winter \cite{DadarlatWinter09}, which asserts that unital $*$-homomorphisms of the form $\CD\to A\otimes\CD$ are unique up to strong asymptotic unitary equivalence. The equivariant version states that for semi-strongly self-absorbing \cstar-dynamical systems $(\CD,\gamma)$, unital and equivariant $*$-homomorphisms of the form $(\CD,\gamma)\to (A,\alpha)$ are unique up to strong asymptotic $G$-unitary equivalence (cf.\ Definition \ref{sasGue}), if $\alpha$ is strongly cocycle conjugate to $\alpha\otimes\gamma$.
The classical result again makes use of $K_1$-injectivity in an essential way, and we prove the equivariant version under the condition that $\gamma$ is unitarily regular; see Theorem \ref{asue}. In fact, proving this result earlier in the second section comes in handy when applying it to the permanence of $\gamma$-absorption under equivariant extensions, and spares us some of the technical computations that were necessary in \cite{TomsWinter07, Kirchberg04}.

As unitary regularity for semi-strongly self-absorbing actions is essential for proving the more difficult results of this paper, it is natural to ask how abundant this condition is. Unitary regularity turns out to be automatic for equivariantly $\CZ$-stable actions, thus providing a large class of examples. We will also discuss examples that are not equivariantly $\CZ$-stable, and in particular, the equivariant analogue of Winter's result \cite{Winter11} does \emph{not} hold in complete generality. However, all examples of semi-strongly self-absorbing actions that I know of are unitarily regular, and it would not be surprising if it ends up being redundant in this case.

Although the objectives of this paper are of a somewhat technical nature, I expect that further developement of the general structure theory of semi-strongly self-absorbing actions will inevitably bear fruits in applications to classification. For example, it is discovered in joint work with Hirshberg, Winter and Wu in \cite{HirshbergSzaboWinterWu16} that the classical permanence of $\CD$-stability under extensions can be applied in a surprising way to show $\CD$-stability for certain (possibly simple) crossed products by flows. Applications of this spirit in a more dynamical context, using the results presented here, will be pursued in subsequent work.

Part of this research was conducted at the Mathematics Department of the University of Kyoto during a research visit in January 2016, and parts of this paper were completed during a visit to the institute Mittag-Leffler between January and March 2016. I am grateful to both institutes for their kind hospitality and support. I am moreover grateful to Sel{\c c}uk Barlak, Masaki Izumi, Sven Raum, Wilhelm Winter, Jianchao Wu and Joachim Zacharias for some inspiring discussions on the subject of or subjects closely related to this paper.


\section{Preliminaries}

\begin{nota}
Unless specified otherwise, we will stick to the following notational conventions in this paper:
\begin{itemize}
\item $A$ and $B$ denote \cstar-algebras.
\item $\CM(A)$ denotes the multiplier algebra of $A$.
\item $G$ denotes a second-countable, locally compact group.
\item The symbol $\alpha$ is used for a continuous action $\alpha: G\curvearrowright A$. 
\item If $\alpha: G\curvearrowright A$ is an action, then $A^\alpha$ denotes the fixed-point algebra of $A$.
\item If $(X,d)$ is some metric space with elements $a,b\in X$, then we write $a=_\eps b$ as a shortcut for $d(a,b)\leq\eps$.
\item For real numbers $\eps_n\in\IR$, the notation $\eps_n\searrow 0$ shall mean $\eps_n>0$ and that the sequence converges to zero.
\item Given a $\sigma$-compact space $X$ and compact subsets $\varnothing\neq K_n\subset X$, the notation $K_n\nearrow X$ shall mean $K_n\subset K_{n+1}^\circ$ and $X=\bigcup_{n\in\IN} K_n$.
\end{itemize}
\end{nota}

We first recall some needed definitions and notation.

\begin{defi}[cf.~{\cite[3.2]{PackerRaeburn89} and \cite[1.3, 1.6]{Szabo15ssa}}]
\label{def:scc}
Let $\alpha: G\curvearrowright A$ be an action. Consider a strictly continuous map $w: G\to\CU(\CM(A))$.
\begin{enumerate}[label=(\roman*),leftmargin=*] 
\item $w$ is called an $\alpha$-1-cocycle, if one has $w_g\alpha_g(w_h)=w_{gh}$ for all $g,h\in G$.
In this case, the map $\alpha^w: G\to\Aut(A)$ given by $\alpha_g^w=\ad(w_g)\circ\alpha_g$ is again an action, and is called a cocycle perturbation of $\alpha$. Two $G$-actions on $A$ are called exterior equivalent if one of them is a cocycle perturbation of the other.
\item Assume that $w$ is an $\alpha$-1-cocycle. It is called an approximate coboundary, if there exists a sequence of unitaries $x_n\in\CU(\CM(A))$ such that $x_n\alpha_g(x_n^*) \stackrel{n\to\infty}{\longrightarrow} w_g$ in the strict topology for all $g\in G$ and uniformly on compact sets. Two $G$-actions on $A$ are called strongly exterior equivalent, if one of them is a cocycle perturbation of the other via an approximate coboundary.
\item Let $\beta: G\curvearrowright B$ be another action. The actions $\alpha$ and $\beta$ are called cocycle conjugate, written $\alpha\cc\beta$, if there exists an isomorphism $\psi: A\to B$ such that $\psi^{-1}\circ\beta\circ\psi$ and $\alpha$ are exterior equivalent. If $\psi$ can be chosen such that $\psi^{-1}\circ\beta\circ\psi$ and $\alpha$ are strongly exterior equivalent, then $\alpha$ and $\beta$ are called strongly cocycle conjugate, written $\alpha\scc\beta$.
\end{enumerate}
\end{defi}

\begin{rem}
In this paper, it is necessary to make use of (central) sequence algebras and their continuous versions with respect to a given action on the underlying \cstar-algebra, which we shall recall below. We note that we will only need (central) sequence algebras arising from the Fr{\'e}chet filter of cofinite sets, even though one could, in principle, use other filters as well. This is merely a matter of taste; in fact, it should be emphasized that all results in this paper can also be proved by working with ultrapowers, with slightly changed definitions. All the reindexation tricks carried out here will work with an elementary hands-on approach, which is one of the reasons why it is presented using ordinary sequence algebras. In other applications, it might be useful to work with ultrapowers, but then the hands-on reindexation tricks have to be replaced with applications of the so-called $\eps$-test of Kirchberg, see \cite[A.1]{Kirchberg04} or \cite[3.1]{KirchbergRordam12}, which uses the axiom of choice.
\end{rem}

\begin{defi}[cf.~{\cite[1.1]{Kirchberg04} and \cite[1.7, 1.9, 1.10]{Szabo15ssa}}] 
Let $A$ be a \cstar-algebra and $\alpha: G\curvearrowright A$ an action of a locally compact group $G$. 
\begin{enumerate}[label={\textup{(\roman*)}},leftmargin=*]
\item The sequence algebra of $A$ is given as 
\[
A_\infty = \ell^\infty(\IN,A)/\set{ (x_n)_n \mid \lim_{n\to\infty}\| x_n\|=0}.
\]
There is a standard embedding of $A$ into $A_\infty$ by sending an element to its constant sequence. We shall always identify $A\subset A_\infty$ this way, unless specified otherwise.
\item Pointwise application of $\alpha$ on representing sequences defines a (not necessarily continuous) $G$-action $\alpha_\infty$ on $A_\infty$. Let
\[
A_{\infty,\alpha} = \set{ x\in A_\infty \mid [g\mapsto\alpha_{\infty,g}(x)]~\text{is continuous} }
\]
be the continuous part of $A_\infty$ with respect to $\alpha$.
\item For some \cstar-subalgebra $B\subset A_\infty$, the (corrected) relative central sequence algebra is defined as
\[
F(B,A_\infty) = (A_\infty\cap B')/\ann(B,A_\infty).
\]
\item If $B\subset A_\infty$ is $\alpha_\infty$-invariant, then the $G$-action $\alpha_\infty$ on $A_\infty$ induces a (not necessarily continuous) $G$-action $\tilde{\alpha}_\infty$ on $F(B,A_\omega)$. Let
\[
F_\alpha(B,A_\infty) = \set{ y\in F(B,A_\infty) \mid [g\mapsto\tilde{\alpha}_{\infty,g}(y)]~\text{is continuous} }
\]
be the continuous part of $F(B,A_\infty)$ with respect to $\alpha$.
\item In case $B=A$, we write $F(A,A_\infty)=F_\infty(A)$ and $F_\alpha(A,A_\infty)=F_{\infty,\alpha}(A)$.
\end{enumerate}
\end{defi}

\begin{defi}[\cf{Szabo15ssa}{3.1, 4.1}]
Let $\CD$ be a separable, unital \cstar-algebra and $G$ a second-countable, locally compact group. Let $\gamma: G\curvearrowright\CD$ be an action. We say that
\begin{enumerate}[label={(\roman*)},leftmargin=*]
\item $\gamma$ is a strongly self-absorbing action, if the equivariant first-factor embedding
\[
\id_\CD\otimes\eins_\CD: (\CD,\gamma)\to (\CD\otimes\CD,\gamma\otimes\gamma)
\]
is approximately $G$-unitarily equivalent to an isomorphism.
\item $\gamma$ is semi-strongly self-absorbing, if it is strongly cocycle conjugate to a strongly self-absorbing action.
\end{enumerate}
\end{defi}

Let us recall the main result from \cite{Szabo15ssa}, which we will use throughout. Note that we will only treat genuine actions instead of cocycle actions in this paper, so we only cite a special case.

\begin{theorem}[cf.~{\cite[3.7, 4.7]{Szabo15ssa} and \cite[Subsection 4.5]{BarlakSzabo15}}]
\label{equi-McDuff}
Let $G$ be a second-countable, locally compact group. Let $A$ be a separable \cstar-algebra and $\alpha: G\curvearrowright A$ an action. Let $\CD$ be a separable, unital \cstar-algebra and $\gamma: G\curvearrowright\CD$ a semi-strongly self-absorbing action. The following are equivalent:
\begin{enumerate}[label=\textup{(\roman*)},leftmargin=*] 
\item $\alpha$ and $\alpha\otimes\gamma$ are strongly cocycle conjugate. \label{equi-McDuff1}
\item $\alpha$ and $\alpha\otimes\gamma$ are cocycle conjugate.  \label{equi-McDuff2}
\item There exists a unital, equivariant $*$-homomorphism from $(\CD,\gamma)$ to $\big( F_{\infty,\alpha}(A), \tilde{\alpha}_\infty \big)$. \label{equi-McDuff3}
\item The equivariant first-factor embedding $\id_A\otimes\eins: (A,\alpha)\to (A\otimes\CD,\alpha\otimes\gamma)$ is equivariantly sequentially split. \label{equi-McDuff4}
\end{enumerate}
\end{theorem}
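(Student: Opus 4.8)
The plan is to prove the cyclic chain of implications $\ref{equi-McDuff1}\Rightarrow\ref{equi-McDuff2}\Rightarrow\ref{equi-McDuff3}\Rightarrow\ref{equi-McDuff4}\Rightarrow\ref{equi-McDuff1}$. Of these, the step $\ref{equi-McDuff1}\Rightarrow\ref{equi-McDuff2}$ is formal, being the observation that strong cocycle conjugacy is a special case of cocycle conjugacy. For $\ref{equi-McDuff2}\Rightarrow\ref{equi-McDuff3}$, I would first record the key consequence of semi-strong self-absorption: since the first-factor embedding $(\CD,\gamma)\to(\CD\otimes\CD,\gamma\otimes\gamma)$ is approximately $G$-unitarily equivalent to an isomorphism, conjugating by the resulting sequence of unitaries produces a unital equivariant $*$-homomorphism $(\CD,\gamma)\to(F_{\infty,\gamma}(\CD),\tilde\gamma_\infty)$. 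Next I would use the general functoriality of the equivariant central sequence algebra: a cocycle conjugacy $\alpha\cc\alpha\otimes\gamma$ induces an equivariant isomorphism $F_{\infty,\alpha}(A)\cong F_{\infty,\alpha\otimes\gamma}(A\otimes\CD)$. Composing the canonical equivariant map coming from the second tensor factor, $(F_{\infty,\gamma}(\CD),\tilde\gamma_\infty)\to\big(F_{\infty,\alpha\otimes\gamma}(A\otimes\CD),\widetilde{(\alpha\otimes\gamma)}_\infty\big)$, with this isomorphism then yields the desired unital equivariant $*$-homomorphism into $(F_{\infty,\alpha}(A),\tilde\alpha_\infty)$.

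For $\ref{equi-McDuff3}\Rightarrow\ref{equi-McDuff4}$, I would upgrade the unital equivariant homomorphism $\theta:(\CD,\gamma)\to(F_{\infty,\alpha}(A),\tilde\alpha_\infty)$ from the corrected central sequence algebra into an honest left inverse in the sequence algebra. The point is that the bilinear multiplication $A\times(A_\infty\cap A')\to A_\infty$ descends to a well-defined product $A\times F_{\infty,\alpha}(A)\to A_\infty$ (using that $A$ is $\sigma$-unital, via a quasicentral approximate unit), which together with $\theta$ and nuclearity of the relevant tensor product assembles into an equivariant $*$-homomorphism $\psi:(A\otimes\CD,\alpha\otimes\gamma)\to(A_{\infty,\alpha},\alpha_\infty)$ satisfying $\psi(a\otimes\eins)=a$ for all $a\in A$. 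This is precisely an equivariant sequential splitting of $\id_A\otimes\eins$. The care needed here is to verify that $\psi$ actually lands in the continuous part $A_{\infty,\alpha}$ and is equivariant, which should follow from the continuity of $\theta$ together with the approximate $\alpha$-invariance of the approximate unit modulo the annihilator.

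Finally, for $\ref{equi-McDuff4}\Rightarrow\ref{equi-McDuff1}$, I would invoke the machinery of equivariantly sequentially split $*$-homomorphisms from \cite{BarlakSzabo15}: such a map admits, after a reindexation argument, approximate equivariant left inverses with good commutation properties, which can be fed into a two-sided approximate intertwining of Elliott type. Using that $\gamma$ is semi-strongly self-absorbing, hence $(\CD\otimes\CD,\gamma\otimes\gamma)\scc(\CD,\gamma)$, one sets up the back-and-forth between $(A,\alpha)$ and $(A\otimes\CD,\alpha\otimes\gamma)$ so as to produce in the limit an isomorphism $A\to A\otimes\CD$ that intertwines $\alpha$ with $\alpha\otimes\gamma$ up to an approximate coboundary. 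The presence of the approximate coboundary, rather than a mere cocycle, is exactly what yields strong rather than ordinary cocycle conjugacy.

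I expect this last step to be the main obstacle. The approximate intertwining has to be organized so that the connecting unitaries form an approximate coboundary while simultaneously respecting the $G$-action uniformly on compact subsets of $G$; keeping track of the strict-topology continuity of the cocycles over $\CM(A)$, and ensuring the errors introduced at each stage of the back-and-forth are summable in a way compatible with both the dynamics and the unitary bookkeeping, is the technically delicate heart of the argument. The passage from $\ref{equi-McDuff2}$ to $\ref{equi-McDuff3}$ is conceptually clean once the functoriality of $F_{\infty,\alpha}(A)$ under cocycle conjugacy is in place, but that functoriality itself requires verifying that the induced action $\tilde\alpha_\infty$ and its continuous part behave correctly under the isomorphism implementing the cocycle conjugacy.
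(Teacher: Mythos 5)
Note first that this paper contains no proof of Theorem \ref{equi-McDuff}: it is recalled verbatim from the prequel (\cite[3.7, 4.7]{Szabo15ssa} and \cite[Subsection 4.5]{BarlakSzabo15}), so the only comparison possible is with those cited proofs. Your cycle (i)$\Rightarrow$(ii)$\Rightarrow$(iii)$\Rightarrow$(iv)$\Rightarrow$(i), with its constituent ingredients --- triviality of (i)$\Rightarrow$(ii); invariance of $F_{\infty,\alpha}(A)$ under cocycle perturbations together with the second-factor map $F_{\infty,\gamma}(\CD)\to F_{\infty,\alpha\otimes\gamma}(A\otimes\CD)$ for (ii)$\Rightarrow$(iii); the multiplication-induced $*$-homomorphism $A\otimes_{\max}F_{\infty,\alpha}(A)\to A_{\infty,\alpha}$ for (iii)$\Rightarrow$(iv); and an equivariant Elliott-type unitary intertwining, with the approximate coboundary supplying the ``strong'' in strong cocycle conjugacy, for (iv)$\Rightarrow$(i) --- reproduces essentially the architecture of those proofs, the one caveat being that the intertwining step, which you correctly identify as carrying all the real difficulty, is outlined rather than executed.
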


\begin{reme}
For the rest of this paper, an action $\alpha$ satisfying condition \ref{equi-McDuff1} from above is called $\gamma$-absorbing or $\gamma$-stable. In the particular case that $\gamma$ is the trivial $G$-action on a strongly self-absorbing \cstar-algebra $\CD$, we will say that $\alpha$ is equivariantly $\CD$-stable.
\end{reme}

Let us first prove or recall some of the permanence properties of $\gamma$-absorbing \cstar-dynamical systems that follow from the above theorem either rather directly or without too much effort.

\begin{rem}
Let $G$ be a second-countable, locally compact group. Let $A$ be a separable \cstar-algebra and $\alpha: G\curvearrowright A$ an action. Let $J\subset A$ be an $\alpha$-invariant ideal and $\pi: A\to A/J$ the quotient map. We have a naturally induced action $\alpha^{\hspace{-2mm}\mod J}: G\curvearrowright A/J$ via $\alpha^{\hspace{-2mm}\mod J}_g(a+J)=\alpha_g(a)+J$ for all $a\in A$, which is well-defined because $J$ is $\alpha$-invariant. Consider the quotient map $\pi_\infty: A_\infty\to (A/J)_\infty$ that is induced on sequence algebra level via componentwise application of $\pi$. Then $\pi_\infty$ is equivariant with respect to $\alpha_\infty$ and $\alpha^{\hspace{-2mm}\mod J}_\infty$. Moreover, it is clear that $\pi_\infty(A_\infty\cap A')\subset (A/J)_\infty\cap (A/J)'$ and $\pi_\infty\big( \ann(A,A_\infty) \big)\subset\ann(A/J, (A/J)_\infty)$, and thus we obtain a well-defined, unital and equivariant $*$-homomorphism $\tilde{\pi}_\infty: F_\infty(A)\to F_\infty(A/J)$. Combining this general observation with \ref{equi-McDuff}, we obtain:
\end{rem}

\begin{cor}
Let $G$ be a second-countable, locally compact group and let $\gamma: G\curvearrowright\CD$ be a semi-strongly self-absorbing action on a separable, unital \cstar-algebra. Let $A$ be a separable \cstar-algebra and $\alpha: G\curvearrowright A$ an action. Let $J\subset A$ be an $\alpha$-invariant ideal. If $\alpha\cc\alpha\otimes\gamma$, then $\alpha^{\hspace{-2mm}\mod J}\cc\alpha^{\hspace{-2mm}\mod J}\otimes\gamma$.
\end{cor}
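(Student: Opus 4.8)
The plan is to translate both cocycle conjugacy statements into the existence of unital equivariant $*$-homomorphisms into the relevant continuous central sequence algebras, using the equivalence \ref{equi-McDuff2}$\Leftrightarrow$\ref{equi-McDuff3} of Theorem \ref{equi-McDuff}, and then to push such a homomorphism through the quotient map with the machinery set up in the preceding remark. Write $\beta=\alpha^{\hspace{-2mm}\mod J}$ for the induced action on $A/J$, and note that $A/J$ is again separable, so Theorem \ref{equi-McDuff} applies to $\beta$ as well.

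First, since $\alpha\cc\alpha\otimes\gamma$, the implication \ref{equi-McDuff2}$\Rightarrow$\ref{equi-McDuff3} of Theorem \ref{equi-McDuff} yields a unital, equivariant $*$-homomorphism
\[
\phi: (\CD,\gamma)\to\big( F_{\infty,\alpha}(A), \tilde{\alpha}_\infty \big).
\]
Next, I would invoke the induced map $\tilde{\pi}_\infty: F_\infty(A)\to F_\infty(A/J)$ from the preceding remark, which is unital and equivariant with respect to $\tilde{\alpha}_\infty$ and $\tilde{\beta}_\infty$. The one point requiring verification is that $\tilde{\pi}_\infty$ restricts to a map between the continuous parts, i.e.\ $\tilde{\pi}_\infty\big( F_{\infty,\alpha}(A) \big)\subset F_{\infty,\beta}(A/J)$. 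This follows directly from equivariance: if $y\in F_\infty(A)$ is such that $g\mapsto\tilde{\alpha}_{\infty,g}(y)$ is continuous, then $g\mapsto\tilde{\beta}_{\infty,g}\big(\tilde{\pi}_\infty(y)\big)=\tilde{\pi}_\infty\big(\tilde{\alpha}_{\infty,g}(y)\big)$ is continuous as the composite of a continuous map with the norm-contractive $*$-homomorphism $\tilde{\pi}_\infty$, so $\tilde{\pi}_\infty(y)\in F_{\infty,\beta}(A/J)$.

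Finally, the composite
\[
\tilde{\pi}_\infty\circ\phi: (\CD,\gamma)\to\big( F_{\infty,\beta}(A/J), \tilde{\beta}_\infty \big)
\]
is again a unital, equivariant $*$-homomorphism. Applying the implication \ref{equi-McDuff3}$\Rightarrow$\ref{equi-McDuff2} of Theorem \ref{equi-McDuff} to the action $\beta$ then gives $\beta\cc\beta\otimes\gamma$, which is precisely the assertion.

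I do not expect a genuine obstacle in this argument; the corollary is essentially a formal consequence of the equivariant McDuff-type theorem together with the functoriality of $F_\infty(\,\cdot\,)$ under equivariant quotient maps. The only step demanding a moment of care is the passage to the \emph{continuous} parts of the central sequence algebras, where one must use that $\tilde{\pi}_\infty$ is equivariant (and not merely a $*$-homomorphism) in order to conclude that continuity of $g\mapsto\tilde{\alpha}_{\infty,g}(y)$ is inherited by the image.
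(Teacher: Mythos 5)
Your proposal is correct and is exactly the argument the paper intends: the corollary is stated as a direct consequence of the preceding remark (which constructs the unital equivariant $*$-homomorphism $\tilde{\pi}_\infty: F_\infty(A)\to F_\infty(A/J)$) combined with Theorem \ref{equi-McDuff}, which is precisely your route. Your explicit verification that $\tilde{\pi}_\infty$ carries the continuous part $F_{\infty,\alpha}(A)$ into $F_{\infty,\beta}(A/J)$ via equivariance is the one detail the paper leaves implicit, and you handle it correctly.
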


The following was proved in \cite{BarlakSzabo15} via methods related to equivariantly sequentially split $*$-homomorphisms:

\begin{theorem}[\see{BarlakSzabo15}{Subsection 4.5}]
Let $G$ be a second-countable, locally compact group. Let $A$ be a separable \cstar-algebra and $\alpha: G\curvearrowright A$ an action. Let $\CD$ be a separable, unital \cstar-algebra and $\gamma: G\curvearrowright\CD$ a semi-strongly self-absorbing action. Assume $\alpha\cc\alpha\otimes\gamma$.
\begin{enumerate}[label=\textup{(\roman*)},leftmargin=*]
\item If $E\subset A$ is a hereditary and $\alpha$-invariant \cstar-subalgebra, then $\alpha|_E\cc\alpha|_E\otimes\gamma$.
\item If $\beta: G\curvearrowright B$ is another action on a separable \cstar-algebra such that $(A,\alpha)$ and $(B,\beta)$ are equivariantly Morita equivalent, then $\beta\cc\beta\otimes\gamma$.
\end{enumerate}
\end{theorem}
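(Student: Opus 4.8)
The plan is to reduce both statements, via the equivariant McDuff-type theorem \ref{equi-McDuff}, to the construction of a unital equivariant $*$-homomorphism from $(\CD,\gamma)$ into the appropriate continuous relative central sequence algebra. The only hypothesis is $\alpha\cc\alpha\otimes\gamma$, which by condition (iii) of Theorem \ref{equi-McDuff} provides a unital equivariant $*$-homomorphism $\phi\colon(\CD,\gamma)\to\big(F_{\infty,\alpha}(A),\tilde\alpha_\infty\big)$. The work is therefore to understand how Kirchberg's corrected relative central sequence algebra behaves under passage to an invariant hereditary subalgebra and under equivariant Morita equivalence; these are the equivariant analogues of the classical facts exploited in \cite{TomsWinter07, Kirchberg04}.

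For part (i), fix an approximate unit $(u_\lambda)_\lambda$ of the hereditary $\alpha$-invariant subalgebra $E\subseteq A$. First I would record that $E_\infty$ is a hereditary subalgebra of $A_\infty$, since $E_\infty A_\infty E_\infty\subseteq E_\infty$ already on the level of representing sequences. Because $E\subseteq A$, every element of $A_\infty\cap A'$ also lies in $A_\infty\cap E'$ and $\ann(A,A_\infty)\subseteq\ann(E,A_\infty)$, so restriction yields a natural unital equivariant $*$-homomorphism $F(A,A_\infty)\to F(E,A_\infty)$; multiplying representing central sequences by $(u_\lambda)_\lambda$ and using heredity identifies $F(E,A_\infty)$ with $F_\infty(E)$ without altering classes modulo $\ann(E,A_\infty)$. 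Passing to continuous parts, I obtain a unital equivariant $*$-homomorphism $F_{\infty,\alpha}(A)\to F_{\infty,\alpha|_E}(E)$; composing with $\phi$ and invoking Theorem \ref{equi-McDuff} gives $\alpha|_E\cc\alpha|_E\otimes\gamma$. (Alternatively, and more concretely, one may restrict an equivariant approximate left inverse of the first-factor embedding $\id_A\otimes\eins$ to the hereditary invariant subalgebra $E\otimes\CD$; heredity of $E_\infty$ forces its image into the continuous part of $E_\infty$, witnessing that $\id_E\otimes\eins$ is equivariantly sequentially split.)

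For part (ii), I would pass to an equivariant linking algebra. The equivariant Morita equivalence between $(A,\alpha)$ and $(B,\beta)$ is implemented by a $G$-equivariant $A$-$B$-imprimitivity bimodule, whose linking algebra $L$ carries a canonical continuous action $\delta\colon G\curvearrowright L$ restricting to $\alpha$ on the corner $A$ and to $\beta$ on the corner $B$. Since $A$ and $B$ are separable, so is $L$, and both corners sit in $L$ as full, $\delta$-invariant hereditary subalgebras. The crucial point is the equivariant Morita-invariance of the corrected relative central sequence algebra: restriction to the corner $A$ induces a natural equivariant isomorphism $F_{\infty,\delta}(L)\cong F_{\infty,\alpha}(A)$, and likewise $F_{\infty,\delta}(L)\cong F_{\infty,\beta}(B)$. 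Here heredity identifies $F(A,L_\infty)$ with $F_\infty(A)$ exactly as in part (i), while fullness ensures that commuting with $A$ modulo $\ann(A,L_\infty)$ is the same as commuting with all of $L$ modulo $\ann(L,L_\infty)$, identifying $F(A,L_\infty)$ with $F_\infty(L)$. Transporting $\phi$ along the chain $F_{\infty,\alpha}(A)\cong F_{\infty,\delta}(L)\cong F_{\infty,\beta}(B)$ yields a unital equivariant $*$-homomorphism from $(\CD,\gamma)$ into $F_{\infty,\beta}(B)$, whence $\beta\cc\beta\otimes\gamma$ by Theorem \ref{equi-McDuff}.

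The step I expect to be the main obstacle is the bookkeeping behind these natural transformations of corrected relative central sequence algebras: verifying that the restriction maps are well-defined unital equivariant $*$-homomorphisms, that heredity genuinely pushes representing central sequences into the smaller sequence algebra without altering their classes modulo the annihilator, and—most delicately—that fullness upgrades the restriction map for $A\subseteq L$ to an isomorphism onto $F_{\infty,\delta}(L)$. One must also keep track of the a priori discontinuous induced actions and check that all identifications restrict correctly to the continuous parts, and confirm that the equivariant linking algebra really does carry a continuous $G$-action with full invariant corners. These are precisely the equivariant refinements of the central-sequence computations of \cite{Kirchberg04} and the Morita-invariance arguments of \cite{TomsWinter07}.
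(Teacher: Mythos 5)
Your part (i) is essentially correct. Both routes you sketch work: the passage $F_{\infty,\alpha}(A)\to F_{\alpha}(E,A_\infty)\cong F_{\infty,\alpha|_E}(E)$ is a reindexation argument using an almost-invariant quasicentral approximate unit of $E$ (Lemma \ref{fixed-approx-unit}), in the same spirit as Lemma \ref{F(A)-alternativ}; and your parenthetical argument, restricting an equivariant approximate left inverse of $\id_A\otimes\eins$ to $E\otimes\CD$ and using that $E_\infty$ is hereditary in $A_\infty$, is in fact the argument behind the citation: note that the paper gives no proof of this theorem at all, but quotes it from \cite{BarlakSzabo15}, where it is proved via equivariantly sequentially split $*$-homomorphisms.

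Part (ii), however, has a genuine gap at its central step. To transport $\phi$ along your chain you must \emph{invert} the restriction map $F_{\infty,\delta}(L)\to F_{\delta}(A,L_\infty)$, so what you actually need is its surjectivity: every $\delta_\infty$-continuous $A$-central element of $L_\infty$ must agree, modulo $\ann(A,L_\infty)$, with a $\delta_\infty$-continuous $L$-central element. Your justification via fullness gives only injectivity (if $x\in L_\infty\cap L'$ annihilates $A$, then it annihilates $\overline{LAL}=L$). Surjectivity, i.e.\ extending central sequences from a full corner to the whole algebra, is the real content: since every other step in your argument is soft, this single step necessarily carries the entire weight of the theorem, so it cannot be dismissed as bookkeeping. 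Even with no group present this is a nontrivial theorem of Kirchberg (see \cite{Kirchberg04}), and its proof uses elements witnessing fullness (or Brown's stabilization theorem) which cannot be chosen invariant under $\delta$, so it does not equivariantize by inspection. That this is a serious issue is shown by the failure of the naive equivariant Brown--Green--Rieffel theorem: $(\IC,\mathrm{triv})$ and $(\CK,\ad u)$ are equivariantly Morita equivalent for any unitary representation $u:\IT\to\CU(\CM(\CK))$, yet $\ad u\otimes\id_\CK=\ad(u\otimes\eins)$ is conjugate to a trivial action only when $u$ is scalar-valued.

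A complete fix, compatible with the methods of \cite{BarlakSzabo15}, avoids the linking algebra isomorphism altogether: by Combes' equivariant Brown--Green--Rieffel theorem, equivariant Morita equivalence of separable systems implies that $\alpha\otimes\ad(\lambda\otimes\eins)$ and $\beta\otimes\ad(\lambda\otimes\eins)$ are conjugate on $A\otimes\CK(L^2(G)\otimes\ell^2)$ and $B\otimes\CK(L^2(G)\otimes\ell^2)$, respectively. Since $g\mapsto\eins\otimes\lambda_g\otimes\eins$ is a strictly continuous cocycle for $\beta\otimes\id_{\CK}$, the action $\beta\otimes\ad(\lambda\otimes\eins)$ is exterior equivalent to $\beta\otimes\id_\CK$, and likewise for $\alpha$. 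As $\gamma$-absorption is invariant under cocycle conjugacy and under tensoring with $(\CK,\id_\CK)$, it passes from $\alpha$ to $\alpha\otimes\id_\CK$, hence to $\alpha\otimes\ad(\lambda\otimes\eins)$, hence to $\beta\otimes\ad(\lambda\otimes\eins)$ and to $\beta\otimes\id_\CK$; finally your part (i), applied to the invariant hereditary corner $B\otimes e\subset B\otimes\CK(L^2(G)\otimes\ell^2)$ for a rank-one projection $e$, gives $\beta\cc\beta\otimes\gamma$. Your proposed equivariant isomorphism $F_{\infty,\delta}(L)\cong F_{\infty,\alpha}(A)$ for full invariant hereditary corners may well be true, but it would require a proof of its own and is precisely where the difficulty of (ii) resides.
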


As it turns out, $\gamma$-absorbing \cstar-dynamical systems are also closed under equivariant inductive limits:

\begin{theorem}
Let $G$ be a second-countable, locally compact group. Let $\CD$ be a separable, unital \cstar-algebra and $\gamma: G\curvearrowright\CD$ a semi-strongly self-absorbing action. Then the class of all separable, $\gamma$-absorbing $G$-\cstar-dynamical systems is closed under equivariant inductive limits.
\end{theorem}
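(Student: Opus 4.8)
The plan is to reduce everything to the central-sequence characterization of Theorem \ref{equi-McDuff}, specifically the implication \ref{equi-McDuff3}$\Rightarrow$\ref{equi-McDuff1}, and to build the required unital equivariant $*$-homomorphism into $F_{\infty,\alpha}(A)$ by a diagonal reindexation of the ones available over the building blocks. So let $(A,\alpha)=\varinjlim (A_n,\alpha_n,\varphi_n)$ be an equivariant inductive limit of separable, $\gamma$-absorbing systems; since $A$ is separable we may assume the system is indexed by $\IN$, and we write $\varphi_{n,\infty}: (A_n,\alpha_n)\to(A,\alpha)$ for the canonical equivariant maps, whose images have dense union in $A$. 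As $\gamma$ is semi-strongly self-absorbing, $\CD$ is nuclear, so every unital equivariant $*$-homomorphism out of $\CD$ is completely positive and contractive and lifts, by the Choi--Effros theorem, to a sequence of c.p.c.\ maps at the level of $\ell^\infty(\IN,-)$.

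Fix data $\mathcal{F}_1\subset\mathcal{F}_2\subset\cdots$ finite subsets of $\CD$ with dense union, $\mathcal{G}_1\subset\mathcal{G}_2\subset\cdots$ finite subsets of $A$ with dense union, compact sets $K_m\nearrow G$, and $\eps_m\searrow 0$. For each $m$, choose $n(m)\geq n(m-1)$ large enough that every $a\in\mathcal{G}_m$ satisfies $a=_{\eps_m}\varphi_{n(m),\infty}(b_a)$ for some $b_a\in A_{n(m)}$, and put $S_m=\set{b_a\mid a\in\mathcal{G}_m}\fin A_{n(m)}$. Since $\alpha_{n(m)}$ is $\gamma$-absorbing, Theorem \ref{equi-McDuff} supplies a unital equivariant $*$-homomorphism $\CD\to F_{\infty,\alpha_{n(m)}}(A_{n(m)})$; lifting it to a sequence of c.p.c.\ maps $\CD\to A_{n(m)}$ and selecting a single suitable term, we obtain a c.p.c.\ map $\phi^{(m)}: \CD\to A_{n(m)}$ that is, to within $\eps_m$, multiplicative and self-adjoint on $\mathcal{F}_m$, commutes with $S_m$, satisfies $\phi^{(m)}(\eins_\CD)b=_{\eps_m}b$ for $b\in S_m$, and is equivariant in the sense that $\alpha_{n(m),g}(\phi^{(m)}(d))=_{\eps_m}\phi^{(m)}(\gamma_g(d))$ for all $d\in\mathcal{F}_m$, $g\in K_m$. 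Composing, set $\Phi_m=\varphi_{n(m),\infty}\circ\phi^{(m)}: \CD\to A$.

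Then the sequence $(\Phi_m)_m$ defines a map $\CD\to A_\infty$ which I claim is a unital equivariant $*$-homomorphism into $F_{\infty,\alpha}(A)$. Indeed, approximate multiplicativity and self-adjointness pass through the $*$-homomorphism $\varphi_{n(m),\infty}$ and the sets $\mathcal{F}_m$ exhaust $\CD$; centrality and approximate unitality relative to any fixed $a\in A$ follow because for large $m$ one has $a=_{\eps_m}\varphi_{n(m),\infty}(b_a)$ with $b_a\in S_m$, so the corresponding conditions for $\phi^{(m)}$ against $S_m$ push forward along $\varphi_{n(m),\infty}$; and equivariance, uniformly on each $K_m$, follows from equivariance of $\varphi_{n(m),\infty}$ together with the last property of $\phi^{(m)}$, which moreover places the image in the continuous part $F_{\infty,\alpha}(A)$. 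Hence condition \ref{equi-McDuff3} holds for $(A,\alpha)$, and \ref{equi-McDuff1} gives $\alpha\scc\alpha\otimes\gamma$, that is, $A$ is $\gamma$-absorbing.

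The argument is in essence a bookkeeping exercise, so I do not expect a genuinely deep obstacle; the only point requiring care is organizing the diagonal sequence so that the conditions defining a unital equivariant $*$-homomorphism into $F_{\infty,\alpha}(A)$—which are phrased relative to all of $A$ and to compact subsets of $G$—are all met simultaneously by maps that factor through the finite stages $A_{n(m)}$. This is handled by choosing $n(m)$ large enough to $\eps_m$-capture $\mathcal{G}_m$ inside $\varphi_{n(m),\infty}(A_{n(m)})$ and by testing the finite-stage conditions against the pullbacks $S_m$, while the uniform-on-compacta equivariance is precisely what guarantees that the resulting homomorphism lands in the continuous part $F_{\infty,\alpha}(A)$ rather than merely in $F_\infty(A)$.
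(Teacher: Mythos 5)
Your overall strategy is sound, and it is genuinely different from the paper's proof. The paper argues softly: for each $n$ the equivariant first-factor embedding $(A^{(n)},\alpha^{(n)})\to(A^{(n)}\otimes\CD,\alpha^{(n)}\otimes\gamma)$ is equivariantly sequentially split by Theorem \ref{equi-McDuff}\ref{equi-McDuff4}, the permanence of equivariantly sequentially split $*$-homomorphisms under equivariant inductive limits (\cite[3.16]{BarlakSzabo15}) transfers this to the limit embedding $\id_A\otimes\eins\colon (A,\alpha)\to(A\otimes\CD,\alpha\otimes\gamma)$, and then \ref{equi-McDuff}\ref{equi-McDuff2} finishes. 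You instead verify condition \ref{equi-McDuff}\ref{equi-McDuff3} directly by a diagonal selection from Choi--Effros lifts of the central-sequence homomorphisms over the building blocks. What your route buys is self-containedness: you do not need the inductive-limit permanence of sequentially split maps, only the McDuff-type theorem itself. The price is the bookkeeping you describe, plus the (automatic, but worth citing) fact that $\CD$ is nuclear, since it is isomorphic to a strongly self-absorbing \cstar-algebra. Your closing observation -- that contractivity of the maps $\Phi_m$ together with $(\eps_m,K_m)$-equivariance forces the limit map to land in the continuous part -- is exactly the right mechanism.

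One step, however, fails as literally stated, namely the extraction of a c.p.c.\ map $\phi^{(m)}\colon\CD\to A_{n(m)}$ that is $\eps_m$-multiplicative on $\mathcal{F}_m$ \emph{in absolute norm} while also satisfying $\phi^{(m)}(\eins_\CD)b=_{\eps_m}b$ for $b\in S_m$. The algebra $F_{\infty,\alpha_{n(m)}}(A_{n(m)})$ is the \emph{corrected} central sequence algebra, so the homomorphism you are lifting is multiplicative only modulo $\ann\big(A_{n(m)},(A_{n(m)})_\infty\big)$, and in the non-unital case no lift can do better. Concretely, take $A_{n(m)}=C_0(\IR)\otimes\CZ$ (projectionless, and absorbing for a trivially-acting $\gamma$): absolute $\eps_m$-multiplicativity at $\eins_\CD$ would make $\phi^{(m)}(\eins_\CD)$ an approximate projection, hence close to $0$ by functional calculus, contradicting approximate unitality on $S_m$. (Equivalently: a c.p.c.\ order zero lift has the form $d\mapsto h\pi(d)$, with multiplicativity defect $(h^2-h)\pi(xy)$, which is not small unless $h$ is close to a projection.) The repair is straightforward and stays within your scheme: demand only the relative conditions $\big(\phi^{(m)}(xy)-\phi^{(m)}(x)\phi^{(m)}(y)\big)b=_{\eps_m}0$ for $x,y\in\mathcal{F}_m$, $b\in S_m$ -- the same form you already, correctly, use for unitality and centrality. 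This weaker form is all the assembly needs, because multiplicativity of the limit map into $F_{\infty,\alpha}(A)$ is likewise required only modulo $\ann(A,A_\infty)$, and the class of $\Phi(x)$ is still $\tilde{\alpha}_\infty$-continuous by contractivity of $\Phi$. A second, smaller point: extracting the $(\eps_m,K_m)$-equivariance of a \emph{single} term from the lift requires uniformity over the compact set $K_m$, whereas the lift a priori gives only pointwise-in-$g$ statements. Here you should take the Choi--Effros lift with values in $A_{\infty,\alpha}\cap A'$ (possible by Lemma \ref{F(A)-alternativ}) and invoke Lemma \ref{sequence-lift}, which provides the equicontinuity of representing sequences needed to upgrade pointwise convergence to uniform convergence on $K_m$. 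With these two repairs, your proof is complete.
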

\begin{proof}
Let $(A^{(n)},\alpha^{(n)})$ be a sequence of $\gamma$-absorbing $G$-\cstar-dynamical systems with connecting maps $\phi_n: (A^{(n)},\alpha^{(n)})\to (A^{(n+1)},\alpha^{(n+1)})$. Let $(A,\alpha)$ be the equivariant inductive limit of this system. Then clearly 
\[
(A\otimes\CD,\alpha\otimes\gamma)\cong \lim_{\longrightarrow} \set{ (A^{(n)}\otimes\CD,\alpha^{(n)}\otimes\gamma), \phi_n\otimes\id }.
\] 
Then we have a commutative diagram of the form
\[
\xymatrix@C+3mm{
(A^{(n)},\alpha^{(n)}) \ar[r]^{\phi_n} \ar[d]_{\id\otimes\eins} & (A^{(n+1)},\alpha^{(n+1)}) \ar[d]_{\id\otimes\eins} \ar[r]^{\phi_{n+1,\infty}} & (A,\alpha) \ar[d]_{\id\otimes\eins} \\
(A^{(n)}\otimes\CD,\alpha^{(n)}\otimes\gamma) \ar[r]^{\phi_n\otimes\id} & (A^{(n+1)}\otimes\CD,\alpha^{(n+1)}\otimes\gamma) \ar[r]^{\phi_{n+1,\infty}\otimes\id} & (A\otimes\CD,\alpha\otimes\gamma)
}
\]
for every $n\in\IN$. As the actions $\alpha^{(n)}$ are $\gamma$-absorbing,  the first two downward maps are equivariantly sequentially split by \ref{equi-McDuff}\ref{equi-McDuff4}. It then follows from \cite[3.16]{BarlakSzabo15} that the last downward map is also equivariantly sequentially split. The claim follows from \ref{equi-McDuff}\ref{equi-McDuff2}.
\end{proof}

A substantial part of the rest of this paper is dedicated to proving that under a mild (and possibly redundant) extra condition on $\gamma$, the class of all separable, $\gamma$-absorbing \cstar-dynamical systems is closed under extensions. In the classical setting, this is a highly non-trivial result of Toms-Winter from \cite{TomsWinter07}. Along the way, we will also prove a strengthened uniqueness result for unital and equivariant $*$-homomorphisms of the form $(\CD,\gamma)\to (A,\alpha)$, where $\alpha\scc\alpha\otimes\gamma$, following the example of Dadarlat-Winter \cite{DadarlatWinter09} in the classical theory. We shall use the rest of this preliminary section to gather some technical tools that we need for this purpose.


\begin{lemma}[cf.~{\cite[3.4]{BarlakSzabo15}}]
\label{sequence-lift}
Let $G$ be a second-countable, locally compact group. Let $A$ be a \cstar-algebra and $\alpha: G\curvearrowright A$ an action.
Let $x\in A_{\infty,\alpha}$ and $(x_n)_n\in\ell^\infty(\IN,A)$ a bounded sequence representing $x$. Then for every $g_0\in G$ and $\delta>0$, there exists an open neighbourhood $U$ of $g_0$ such that
\[
\sup_{k\in\IN}~ \sup_{g\in U}~ \|\alpha_g(x_k)-\alpha_{g_0}(x_k) \| \leq \delta.
\]
In particular, the sequence $(x_n)_n$ is a continuous element with respect to the componentwise action of $\alpha$ on $\ell^\infty(\IN,A)$.
\end{lemma}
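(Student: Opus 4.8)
The plan is to reduce to the case $g_0=e$ and then to convert the hypothesis, which controls a $\limsup$ over the sequence pointwise in the group variable, into the desired control that is uniform over the sequence but local in the group variable. Since each $\alpha_{g_0}$ is isometric, $\|\alpha_g(x_k)-\alpha_{g_0}(x_k)\|=\|\alpha_{g_0^{-1}g}(x_k)-x_k\|$, so after replacing $U$ by $g_0^{-1}U$ it suffices to produce a neighbourhood $U$ of the identity $e$ with $\sup_k\sup_{h\in U}f_k(h)\le\delta$, where $f_k(h):=\|\alpha_h(x_k)-x_k\|$. Each $f_k$ is continuous with $f_k(e)=0$ and $f_k\le 2M$ for $M:=\sup_k\|x_k\|$. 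The assumption $x\in A_{\infty,\alpha}$ says precisely that the function $L(h):=\limsup_k f_k(h)$, which is the $A_\infty$-norm of $\alpha_{\infty,h}(x)-x$, satisfies $L(h)\to 0$ as $h\to e$.

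The main obstacle is exactly the interchange of $\sup_k$ with the group limit: the hypothesis only gives that for each fixed $h$ near $e$ the tail $f_k(h)$ is eventually small, with a threshold depending on $h$, whereas a naive ``diagonal'' could in principle let a single large term $f_{k_m}(h_m)$ survive along some $h_m\to e$ with $k_m\to\infty$. Pure continuity together with local compactness does not rule this out: invariance of the action makes the uniform continuity of $h\mapsto\alpha_{\infty,h}(x)$ on compacta degenerate back into plain continuity at $e$. What saves the day is the smoothing afforded by the group action, combined with the uniform bound $f_k\le 2M$.

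Concretely, I would fix a mollifier $\phi\in C_c(G)_+$ with $\int_G\phi\,dg=1$ whose support lies in a prescribed small neighbourhood of $e$, and pass to the averaged elements $y_k:=\int_G\phi(g)\alpha_g(x_k)\,dg\in A$. Two estimates then drive the argument. First, averaging tames the action uniformly in $k$: a change of variables gives $\alpha_h(y_k)-y_k=\int_G\big(\phi(h^{-1}g)-\phi(g)\big)\alpha_g(x_k)\,dg$, whence $\|\alpha_h(y_k)-y_k\|\le M\,\|L_h\phi-\phi\|_1$, and the right-hand side tends to $0$ as $h\to e$ by continuity of left translation on $L^1(G)$ (this is where local compactness enters, via Haar measure). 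Second, $\|y_k-x_k\|\le\int_G\phi(g)f_k(g)\,dg$. The key point is that the uniform bound $f_k\le 2M$ makes $2M\phi$ an integrable majorant, so the reverse Fatou lemma yields $\limsup_k\int_G\phi f_k\,dg\le\int_G\phi\,\limsup_k f_k\,dg=\int_G\phi\,L\,dg\le\sup_{g\in\operatorname{supp}\phi}L(g)$, which can be made arbitrarily small by shrinking $\operatorname{supp}\phi$, since $L(g)\to 0$ as $g\to e$.

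Putting these together, the triangle inequality gives $f_k(h)\le 2\|x_k-y_k\|+\|\alpha_h(y_k)-y_k\|\le 2\int_G\phi f_k\,dg+M\|L_h\phi-\phi\|_1$ for all $k$ and $h$. Choosing $\operatorname{supp}\phi$ small enough that $\int_G\phi L\,dg\le\delta/8$, there is some $N$ with $\int_G\phi f_k\,dg\le\delta/4$ for all $k\ge N$; choosing then a neighbourhood $U\ni e$ with $M\|L_h\phi-\phi\|_1\le\delta/2$ for $h\in U$ yields $f_k(h)\le\delta$ for all $k\ge N$ and all $h\in U$. The finitely many remaining indices $k<N$ are handled by the ordinary continuity of each $f_k$ at $e$ together with $f_k(e)=0$, by further shrinking $U$ to meet the corresponding neighbourhoods; this produces the desired $U$ with $\sup_k\sup_{h\in U}f_k(h)\le\delta$. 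The final ``in particular'' is then immediate, as the displayed estimate is exactly the statement that $h\mapsto(\alpha_h(x_k))_k$ is norm-continuous into $\ell^\infty(\IN,A)$.
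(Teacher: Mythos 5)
Your proof is correct. Note that the paper itself contains no proof of this lemma: it is quoted from \cite[3.4]{BarlakSzabo15}, so there is no internal argument to compare against line by line. Judged on its own terms, your mollification argument is complete and rigorous. You correctly isolate the real difficulty, namely upgrading the hypothesis $\limsup_k\|\alpha_h(x_k)-x_k\|\to 0$ (as $h\to e$) to a bound uniform over \emph{all} $k$, and you correctly observe that abstract continuity of the functions $f_k(h)=\|\alpha_h(x_k)-x_k\|$ alone cannot do this; some structural input from the action is needed. Your two inputs are exactly right: the Haar-averaging estimate $\|\alpha_h(y_k)-y_k\|\le M\|L_h\phi-\phi\|_1$, which is uniform in $k$, and the reverse Fatou bound $\limsup_k\int\phi f_k\le\int\phi L$, which is legitimate because $2M\phi$ is an integrable majorant. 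Together with the triangle inequality and handling the finitely many initial indices by plain continuity, this closes the argument, and the ``in particular'' clause follows as you say.

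For comparison, there is an alternative route that avoids Haar measure and vector-valued integration entirely: the functions $f_k$ are symmetric and subadditive, $f_k(gh)\le f_k(g)+f_k(h)$ and $f_k(g^{-1})=f_k(g)$, since $\alpha_g$ is isometric. Writing an open neighbourhood $W$ of $e$ on which $L<\delta/4$ as the countable union of the relatively closed sets $E_N=\{h\in W \mid f_k(h)\le\delta/4~\text{for all}~k\ge N\}$, the Baire category theorem yields some $N$ and a nonempty open set $O\subset E_N$; then for all $g,g'\in O$ and $k\ge N$ one has $f_k(g^{-1}g')\le f_k(g)+f_k(g')\le\delta/2$, so $f_k\le\delta/2$ on the neighbourhood $O^{-1}O$ of $e$, and the indices $k<N$ are again absorbed by continuity. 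That version trades your analytic machinery (Haar measure, $L^1$-continuity of translation, reverse Fatou) for Baire category plus the algebraic subadditivity identity; your version has the advantage of being constructive in how the neighbourhood is produced. Either argument is a valid substitute for the external citation.
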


\begin{lemma} \label{relative-commutant}
Let $G$ be a second-countable, locally compact group. Let $A$ be a (not necessarily separable) \cstar-algebra and $\alpha: G\curvearrowright A$ an action. Let $\CD$ be a separable, unital \cstar-algebra and $\gamma: G\curvearrowright\CD$ a semi-strongly self-absorbing action. Assume $(A,\alpha)\cc (A\otimes\CD, \alpha\otimes\gamma)$. Then for every separable, $\alpha_\infty$-invariant \cstar-subalgebra $B\subset A_{\infty,\alpha}$, there exists a unital and equivariant $*$-homomorphism from $(\CD,\gamma)$ to $F_\alpha(B,A_\infty)$.
\end{lemma}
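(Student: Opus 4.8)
The plan is to reduce to the separable case and then run a reindexation argument. First I would separabilize. Since $B$ is separable and $\alpha_\infty$-invariant and $G$ is second-countable, I can choose for each element of a countable dense subset of $B$ a bounded representing sequence in $A$, collect all their entries, and pass to the $\alpha$-invariant \cstar-subalgebra of $A$ that they generate; because each orbit map $g\mapsto\alpha_g(a)$ is norm-continuous and $G$ is separable, this subalgebra is again separable. Using the hypothesis $(A,\alpha)\cc (A\otimes\CD,\alpha\otimes\gamma)$ and a Blackadar-type separable inheritance argument (building an increasing chain of separable, $\alpha$-invariant subalgebras compatible with the implementing isomorphism and its perturbing cocycle), I would enlarge this to a separable, $\alpha$-invariant subalgebra $A_0\subseteq A$ such that $B$ lies in the image of $(A_0)_\infty$ in $A_\infty$ and such that the restricted system is itself $\gamma$-absorbing, i.e.\ $\alpha|_{A_0}\cc\alpha|_{A_0}\otimes\gamma$. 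Since the elements of $B$ are $\alpha_\infty$-continuous, we then in fact have $B\subset (A_0)_{\infty,\alpha|_{A_0}}$.

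Second, as $A_0$ is separable and $\gamma$-absorbing, Theorem \ref{equi-McDuff}\ref{equi-McDuff3} supplies a unital, equivariant $*$-homomorphism $\phi:(\CD,\gamma)\to (F_{\infty,\alpha|_{A_0}}(A_0),\tilde\alpha_\infty)$, that is, a map into the relative commutant of the \emph{constant} sequences from $A_0$. To upgrade this to the relative commutant of $B$, whose elements are genuine (non-constant) sequences, I would reindex: picking lifts of the $\phi$-images of a generating set of $\CD$ to bounded sequences in $(A_0)_\infty\cap A_0'$, together with a countable approximating family for $B$ and a countable approximate unit, a diagonal sequence selects, as $\eps\to 0$ over larger and larger finite data, indices along which the image of $\CD$ becomes approximately central for $B$ (not merely for $A_0$), approximately multiplicative, approximately unital modulo $\ann(B,(A_0)_\infty)$, and approximately equivariant. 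This yields a unital, equivariant $*$-homomorphism $(\CD,\gamma)\to F_{\alpha|_{A_0}}(B,(A_0)_\infty)$. Here one must ensure that the reindexed element lands in the \emph{continuous} part $F_{\alpha|_{A_0}}$, and this is exactly what Lemma \ref{sequence-lift} secures: it provides the uniform-in-index equicontinuity of the representing sequences needed for the diagonal element to remain $\alpha_\infty$-continuous.

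Finally, the inclusion $A_0\hookrightarrow A$ induces $(A_0)_\infty\hookrightarrow A_\infty$, hence an equivariant map $(A_0)_\infty\cap B'\to A_\infty\cap B'$ carrying $\ann(B,(A_0)_\infty)$ into $\ann(B,A_\infty)$, and therefore a unital, equivariant $*$-homomorphism $F_{\alpha|_{A_0}}(B,(A_0)_\infty)\to F_\alpha(B,A_\infty)$. Composing it with the map from the previous step produces the desired unital, equivariant $*$-homomorphism $(\CD,\gamma)\to F_\alpha(B,A_\infty)$. I expect the main obstacle to be the separable inheritance step that makes $A_0$ genuinely $\gamma$-absorbing, namely keeping the perturbing cocycle and the $G$-action under control throughout the inductive construction while staying separable and $\alpha$-invariant, together with verifying that equivariant continuity survives the reindexation; the reindexation combinatorics themselves are routine in this framework and could alternatively be packaged via Kirchberg's $\eps$-test.
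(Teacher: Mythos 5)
Your proposal is correct in outline, but it takes a genuinely different route from the paper's. The paper never separabilizes $A$ and never invokes Theorem \ref{equi-McDuff}: its proof cites the prequel result \cite[4.6]{Szabo15ssa}, which provides a unital equivariant $*$-homomorphism $\psi\colon(\CD,\gamma)\to(\CD_{\infty,\gamma}\cap\CD',\gamma_\infty)$, composes with the second-factor embedding into $(A\otimes\CD)_\infty$, transports along the isomorphism implementing $(A,\alpha)\cc(A\otimes\CD,\alpha\otimes\gamma)$ (the perturbing multiplier cocycle is harmless here, since relative-commutant classes commute with multipliers modulo the annihilator), and then performs a single reindexation to trade the commutant of constant sequences for the commutant of $B$. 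Your route instead reduces to the separable case so that Theorem \ref{equi-McDuff}\ref{equi-McDuff3} applies to $A_0$, reindexes (this intermediate step is exactly Lemma \ref{reindex1}, which the paper states separately), and pushes forward along $(A_0)_\infty\hookrightarrow A_\infty$; these last steps are sound, including the observations that $B$ pulls back into $(A_0)_{\infty,\alpha|_{A_0}}$ and that the induced map $F_{\alpha|_{A_0}}(B,(A_0)_\infty)\to F_\alpha(B,A_\infty)$ is unital and equivariant. What each approach buys: the paper's argument is short because all the work is carried by \cite[4.6]{Szabo15ssa} plus one reindexation, and it avoids inheritance questions entirely; yours is more modular and produces reusable tools, but it concentrates the real difficulty in the separabilization step.

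Two points in that step, which you rightly flag as the main obstacle, deserve to be made concrete. First, a naive chain construction along the implementing isomorphism $\theta\colon A\to A\otimes\CD$ yields $\theta(A_0)=C\otimes\CD$ for some separable, $\alpha$-invariant $C\subseteq A$ that need not equal $A_0$; to conclude $\alpha|_{A_0}\cc\alpha|_{A_0}\otimes\gamma$ you must either interleave the chain so that $C=A_0$ (at each stage put $C_n$ into $A_{n+1}$ and $A_n$ into $C_{n+1}$, and put $\theta^{-1}(C_n\otimes\CD)$ into $A_{n+1}$), or close the loop via $\alpha|_{A_0}\cc\alpha|_{C}\otimes\gamma\cc\alpha|_{C}\otimes\gamma\otimes\gamma\cc\alpha|_{A_0}\otimes\gamma$, using that $\gamma\cc\gamma\otimes\gamma$ for semi-strongly self-absorbing $\gamma$. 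Second, a unitary in $\CU(\CM(A\otimes\CD))$ does not automatically define a multiplier of a \cstar-subalgebra, so the chain must in addition absorb the products $w_gc$, $cw_g$, $w_g^*c$, $cw_g^*$ for $c$ in countable dense subsets and $g$ in a countable dense subset of $G$, extended to all $g$ by strict continuity of the cocycle; only then does $w$ restrict to a strictly continuous $(\alpha|_{A_0}\otimes\gamma)$-cocycle in $\CM(\theta(A_0))$. With these two points supplied, your argument is complete.
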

\begin{proof}
As $\gamma$ is semi-strongly self-absorbing, we know (\cf{Szabo15ssa}{4.6}) that there is a unital and equivariant $*$-homomorphism $\psi: (\CD,\gamma)\to(\CD_{\infty,\gamma}\cap\CD', \gamma_\infty)$. The statement is then a simple reindexation trick, whose details we omit.
\end{proof}

The following is also proved via a simple reindexation trick:

\begin{lemma} \label{reindex1}
Let $G$ be a second-countable, locally compact group. Let $A$ be a $\sigma$-unital \cstar-algebra and $\alpha: G\curvearrowright A$ an action. Let $D$ be a separable, unital \cstar-algebra with an action $\gamma: G\curvearrowright D$. Suppose that there exists a unital and equivariant $*$-homomorphism from $(D,\gamma)$ to $\big( F_{\infty,\alpha}(A), \tilde{\alpha}_\infty \big)$. Then for every separable, $\alpha_\infty$-invariant \cstar-subalgebra $C\subset A_{\infty,\alpha}$, there exists a unital and equivariant $*$-homomorphism from $(D,\gamma)$ to $\big( F_\alpha(C,A_\infty), \tilde{\alpha}_\infty \big)$.
\end{lemma}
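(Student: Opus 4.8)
The plan is to prove this by a diagonal reindexation argument, in the spirit of Kirchberg's $\eps$-test but carried out by hand, since all the relevant algebras are separable or $\sigma$-unital. Write $\phi\colon (D,\gamma)\to(F_{\infty,\alpha}(A),\tilde\alpha_\infty)$ for the given unital equivariant $*$-homomorphism. Since $D$ is separable I would fix a dense sequence $(d_k)_k$ in its unit ball; since $C$ is separable I would fix a dense sequence $(c_j)_j$ in $C$ together with bounded representing sequences $(c_{j,n})_n$ in $A$ for each $c_j\in A_{\infty,\alpha}$; and since $G$ is second-countable and locally compact I would fix compact sets $K_m\nearrow G$.

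First I would lift $\phi$ to the sequence level. For each $k$, choose a lift of $\phi(d_k)$ to an element of $A_\infty\cap A'$ lying in the continuous part $A_{\infty,\alpha}$, and then a bounded representing sequence $(y_{k,n})_n$ in $A$; likewise fix representing sequences $(w_{k,k',n})_n$, $(v_{k,n})_n$ and $(u_n)_n$ of $\phi(d_kd_{k'})$, $\phi(d_k^*)$ and $\phi(\eins_D)$. Unpacking that $\phi$ is a unital equivariant $*$-homomorphism into the \emph{corrected} relative commutant, this data satisfies, for each fixed $a\in A$, each $g\in G$ and as $n\to\infty$: approximate centrality $\|[y_{k,n},a]\|\to 0$; the multiplicative, adjoint and unital relations modulo the annihilator, so that the defects $(y_{k,n}y_{k',n}-w_{k,k',n})$, $(y_{k,n}^*-v_{k,n})$ and $(u_n-\eins)$ all satisfy $\|(\cdot)\,a\|\to 0$; and approximate equivariance $\|(\alpha_g(y_{k,n})-y^{(g)}_{k,n})\,a\|\to 0$ uniformly for $g\in K_m$, where $(y^{(g)}_{k,n})_n$ represents $\phi(\gamma_g(d_k))$. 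By Lemma \ref{sequence-lift} each $(y_{k,n})_n$ is a continuous element of $\ell^\infty(\IN,A)$, with a modulus of continuity uniform in $n$; I would record this for later.

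The heart of the argument is the diagonal choice. For each $n$ the finite family $\set{c_{j,n}\mid j\le n}$ consists of \emph{fixed} elements of $A$, so the asymptotic relations above, applied with these finitely many elements in the role of $a$ and with $g$ ranging over the fixed compact set $K_n$, let me choose a strictly increasing sequence $\lambda(n)\nearrow\infty$ such that, setting $\tilde y_{k,n}=y_{k,\lambda(n)}$ (and correspondingly $\tilde w, \tilde v, \tilde u$), all of the following hold to within $1/n$ for indices $j,k,k'\le n$: $\|[\tilde y_{k,n},c_{j,n}]\|\le 1/n$; $\|(\tilde y_{k,n}\tilde y_{k',n}-\tilde w_{k,k',n})\,c_{j,n}\|\le 1/n$ and likewise for the adjoint and unit relations; and $\|(\alpha_g(\tilde y_{k,n})-\tilde y^{(g)}_{k,n})\,c_{j,n}\|\le 1/n$ for $g\in K_n$. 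Since $\ann(C,A_\infty)$ is precisely the set of classes annihilated by $C$, these estimates say exactly that the classes $\Phi(d_k):=[(\tilde y_{k,n})_n]\in F(C,A_\infty)$ commute with $C$, are multiplicative, $*$-preserving and unital, and transform equivariantly. Continuity of $g\mapsto\tilde\alpha_\infty(\Phi(d_k))$, i.e.\ membership in $F_\alpha(C,A_\infty)$, follows because a reindexing of the uniformly continuous sequence $(y_{k,n})_n$ is again continuous by Lemma \ref{sequence-lift}. Thus $\Phi$ extends to the desired unital equivariant $*$-homomorphism $(D,\gamma)\to(F_\alpha(C,A_\infty),\tilde\alpha_\infty)$.

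I expect the main technical point to be the bookkeeping of the \emph{corrected} relative commutant: the relations defining $\phi$ hold only modulo $\ann(A,A_\infty)$, whereas the target requires them modulo $\ann(C,A_\infty)$, and these ideals are a priori unrelated. The mechanism bridging them is exactly the observation that testing an annihilator condition against the separable algebra $C$ only ever involves the individual elements $c_{j,n}\in A$, against which the given relations already decay; $\sigma$-unitality of $A$ guarantees that this testing reduces to countably many conditions, so that a single diagonal sequence $\lambda(n)$ can be chosen to meet all requirements simultaneously. The equivariance estimates require uniform control of the $G$-action across the reindexing, which is the precise purpose of Lemma \ref{sequence-lift}.
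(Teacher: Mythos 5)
Your overall strategy---a diagonal reindexation over countably many conditions coming from dense sequences in $D$ and $C$---is exactly the ``simple reindexation trick'' the paper has in mind (it omits the proof entirely), but your argument has a genuine gap at its very first step: you ``choose a lift of $\phi(d_k)$ to an element of $A_\infty\cap A'$ lying in the continuous part $A_{\infty,\alpha}$''. This is not automatic. By definition $F_{\infty,\alpha}(A)$ is the continuous part of the \emph{quotient} $(A_\infty\cap A')/\ann(A,A_\infty)$, and continuity of the induced orbit map $g\mapsto\tilde{\alpha}_{\infty,g}(\phi(d_k))$ says nothing a priori about the existence of a representative whose own orbit map $g\mapsto\alpha_{\infty,g}(\cdot)$ is norm-continuous. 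The statement ``continuous part of the quotient equals quotient of the continuous part'' is precisely the content of Lemma \ref{F(A)-alternativ}, which the paper advertises as a non-trivial fact and proves only in Section 3 (after \ref{reindex1}) using Kasparov's quasicentral, approximately invariant approximate units (\ref{fixed-approx-unit}) plus a further diagonal argument; moreover that lemma is stated for separable $C\subset A_{\infty,\alpha}$, whereas you need its analogue for $C=A$ with $A$ merely $\sigma$-unital---an adaptation that is true (and is in fact the only place where $\sigma$-unitality of $A$ genuinely enters) but must be proved. Since everything downstream in your proof rests on this lift---the appeal to \ref{sequence-lift} for a modulus of continuity uniform in $n$, the uniformity over $g\in K_n$ in the diagonal choice, and the final verification that $\Phi(d_k)$ lands in $F_\alpha(C,A_\infty)$ rather than merely in $F(C,A_\infty)$---the gap is structural, not cosmetic.

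A secondary problem is the equivariance bookkeeping. You posit representing sequences $(y^{(g)}_{k,n})_n$ of $\phi(\gamma_g(d_k))$ for \emph{every} $g$ and assert that the defects decay ``uniformly for $g\in K_m$''. That is an uncountable family of unconstrained choices, and uniform decay does not follow from the pointwise-in-$g$ statements you unpack from equivariance of $\phi$: one needs either to reduce to a countable dense set of group elements (using continuity in $g$ of both sides, which again presupposes the continuous lifts), or a finite-net argument on $K_n$ that combines the equicontinuity from \ref{sequence-lift} with approximation of $\gamma_g(d_k)$ by members $d_{k'}$ of your dense sequence, so that only the lifts $y_{k'}$ ever appear. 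Both repairs are available once the lifting step is secured, but as written the uniform estimate is asserted rather than proved. Finally, a small misattribution: the reduction to countably many conditions comes from separability of $C$ and $D$, not from $\sigma$-unitality of $A$; $\sigma$-unitality is needed exactly in the lifting step you skipped.
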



In order to prove the main results of this paper, we will need to carefully keep track of certain kinds of unitary homotopies in unital \cstar-algebras.
In particular, given a unital \cstar-dynamical system $(A,\alpha)$ and a unitary $u\in \CU_0(A)$ that is approximately fixed by $\alpha$, we need to consider when it is possible to find a unitary path connecting $u$ with the unit $\eins_A$, which goes entirely through unitaries that are still approximately fixed by $\alpha$. This requires some extra care and gives rise to some technical observations that we shall treat for the rest of this section.

\begin{nota} \label{1-unitaries}
Let $G$ be a second-countable, locally compact group. Let $A$ be a \cstar-algebra and $\alpha: G\curvearrowright A$ an action. For $\eps>0$ and a compact set $K\subset G$, define the closed set
\[
A^\alpha_{\eps,K}=\set{a\in A \mid \|\alpha_g(a)-a\|\leq\eps~\text{for all}~g\in K} \subset A.
\]
If $A$ is unital, then also consider
\[
\CU(A^\alpha_{\eps,K}) = \CU(A)\cap A^\alpha_{\eps,K}
\]
and
\[
\CU_0(A^\alpha_{\eps,K}) = \set{ u(1)\in\CU(A^\alpha_{\eps,K}) \mid u: [0,1]\to\CU(A^\alpha_{\eps,K})~\text{continuous,}~u(0)=\eins}.
\]
\end{nota}

\begin{rem} \label{rem:1-unitaries}
Concerning the above notation for unital $A$, the following observations are trivial, but useful:
\begin{enumerate}[label={(\roman*)},leftmargin=*]
\item One has the product rules
\[
\CU(A^\alpha_{\eps_1,K})\cdot \CU(A^\alpha_{\eps_2,K}) \subset \CU(A^\alpha_{\eps_1+\eps_2,K})
\]
and
\[
\CU_0(A^\alpha_{\eps_1,K})\cdot \CU_0(A^\alpha_{\eps_2,K}) \subset \CU_0(A^\alpha_{\eps_1+\eps_2,K})
\]
for all $\eps_1,\eps_2>0$. \label{1-unitaries:1}
\item It follows from an easy functional calculus argument that for every unitary $u\in\CU(A)$ and $0<\eps<2$, the condition $\|\eins-u\|\leq\eps$ implies $u\in\CU_0(A^\alpha_{2\eps,K})$ for every compact set $K\subset G$. \label{1-unitaries:2}
\item If $\delta>0$, $0<\eps<2$, $K\subset G$ is a compact set and $u\in\CU_0(A^\alpha_{\delta,K})$, then every unitary $v\in\CU(A)$ with $\|v-u\|\leq\eps$ is in $\CU_0(A^\alpha_{\delta+2\eps,K})$. \label{1-unitaries:3}
\end{enumerate}
\end{rem}

\begin{lemma} \label{lifts}
Let $G$ be a second-countable, locally compact group. Let $A$ be a unital \cstar-algebra and $\alpha: G\curvearrowright A$ an action. Let $\eps>0$ and $K\subset G$ a compact set. Then for every unitary $u\in\CU_0( \big(A_{\infty,\alpha}\big)^{\alpha_\infty}_{\eps,K} )\subset A_{\infty,\alpha}$, any representing sequence of unitaries $u_n\in A$ satisfies $u_n\in\CU_0( A^\alpha_{\eps+\delta_n,K})$ for some sequence $\delta_n\searrow 0$.
\end{lemma}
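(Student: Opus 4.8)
The plan is to realize the given continuous path witnessing $u \in \CU_0\big((A_{\infty,\alpha})^{\alpha_\infty}_{\eps,K}\big)$ at the level of representing sequences, while controlling the approximate fixedness on $K$ up to a vanishing error. Write $u = w(1)$ for a norm-continuous path $w\colon [0,1] \to \CU\big((A_{\infty,\alpha})^{\alpha_\infty}_{\eps,K}\big)$ with $w(0) = \eins$, and fix a small parameter $\tau \in (0,2)$. Since $[0,1]$ is compact, $w$ is uniformly continuous, so I can choose a partition $0 = t_0 < t_1 < \dots < t_m = 1$ with $\|w(t_j) - w(t_{j-1})\| < \tau$ for all $j$. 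The idea is to lift the finitely many points $w(t_j)$ to representing unitaries in $A$ that are uniformly well fixed on $K$, and then to join consecutive lifts by short, carefully anchored paths whose concatenation runs from $\eins$ to $u_n$.

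For each $j$ choose a representing sequence of unitaries $(w^{(j)}_n)_n$ for $w(t_j)$, with $w^{(0)}_n = \eins$ and $w^{(m)}_n = u_n$ the given sequence. Each $w(t_j)$ lies in $A_{\infty,\alpha}$ and satisfies $\|\alpha_{\infty,g}(w(t_j)) - w(t_j)\| \leq \eps$ for $g \in K$, which at the level of representing sequences reads $\limsup_n \|\alpha_g(w^{(j)}_n) - w^{(j)}_n\| \leq \eps$ for each fixed $g$. To make this bound uniform over $g \in K$ I apply Lemma \ref{sequence-lift} to the finitely many elements $w(t_j)$: it provides equicontinuity of the maps $g \mapsto \alpha_g(w^{(j)}_k)$ in $k$, and a finite-subcover argument over the compact set $K$ then upgrades the pointwise bound to $\limsup_n \sup_{g\in K}\|\alpha_g(w^{(j)}_n) - w^{(j)}_n\| \leq \eps$. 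As there are only finitely many $j$, this yields a single sequence $\delta_n \searrow 0$ and an index $N$ with $w^{(j)}_n \in A^\alpha_{\eps+\delta_n,K}$ for all $j$ and all $n \geq N$; enlarging $N$ I also arrange $\|w^{(j)}_n - w^{(j-1)}_n\| < \tau$ for $n \geq N$, since that norm has $\limsup_n$ bounded by $\|w(t_j) - w(t_{j-1})\| < \tau$.

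Now comes the essential point. For $n \geq N$ set $c = (w^{(j-1)}_n)^*w^{(j)}_n$, a unitary with $\|c - \eins\| < \tau$, and let $h = \log c$ via the principal branch, so $\|h\|$ is of order $\tau$. The segment $\sigma \mapsto w^{(j-1)}_n \exp(i\sigma h)$ runs from $w^{(j-1)}_n$ to $w^{(j)}_n$ and stays within $\tau$ of the anchor $w^{(j-1)}_n$, because $\|\exp(i\sigma h) - \eins\| \leq \|c - \eins\| < \tau$ for $\sigma \in [0,1]$. Consequently every unitary $v$ on this segment satisfies, for $g \in K$, the estimate $\|\alpha_g(v) - v\| \leq \|v - w^{(j-1)}_n\| + \|\alpha_g(w^{(j-1)}_n) - w^{(j-1)}_n\| + \|w^{(j-1)}_n - v\| \leq \eps + \delta_n + 2\tau$. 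Concatenating the $m$ segments produces a continuous path of unitaries from $\eins$ to $u_n$ lying entirely in $A^\alpha_{\eps+\delta_n+2\tau,K}$, whence $u_n \in \CU_0(A^\alpha_{\eps+\delta_n+2\tau,K})$ for $n \geq N$. Since $\tau$ was arbitrary and for each fixed $\tau$ this holds for all large $n$ with $\delta_n \to 0$, a diagonal argument over $\tau = 1/k$ delivers a single sequence $\delta'_n \searrow 0$ with $u_n \in \CU_0(A^\alpha_{\eps+\delta'_n,K})$ for all large $n$, which suffices as finitely many initial terms of a representing sequence are immaterial.

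The main obstacle is exactly the bookkeeping of the approximate-fixedness level in this last step. The naive temptation is to telescope $u_n = c_1 c_2\cdots c_m$ into a product of increments and invoke the product rule of Remark \ref{rem:1-unitaries}\ref{1-unitaries:1}; but this lets the error grow like $2m\tau$, which does not tend to $\eps$ as the partition is refined. Equally, treating $\exp(i\sigma\log((w^{(j-1)}_n)^*w^{(j)}_n))$ as an honest increment would inherit the full defect $\approx 2\eps$ of that increment's approximate fixedness. The resolution is to regard the construction as a \emph{single} continuous path rather than a product: each point of every segment hugs a uniformly well-fixed anchor $w^{(j-1)}_n$, so its level is governed by the anchor's level $\eps + \delta_n$ plus twice the small spacing $\tau$, with no accumulation in $j$. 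Letting $\tau \to 0$ then removes the excess.
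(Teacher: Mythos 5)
Your argument is correct, but it takes a genuinely different route from the paper's. The paper lifts the \emph{entire} path at once: writing $\Pi:\ell^\infty(\IN,A)\to A_\infty$ for the quotient map, it invokes the unitary path-lifting theorem for quotients of \cstar-algebras to produce a continuous path $w=(w_n)_n:[0,1]\to\CU\big(\Pi^{-1}(A_{\infty,\alpha})\big)$ with $w(0)=\eins$ and $\Pi(w(t))=u(t)$, then uses \ref{sequence-lift} exactly as you do --- but over the product $[0,1]\times K$ --- to get $\limsup_n\max_{0\le t\le 1}\max_{g\in K}\|\alpha_g(w_n(t))-w_n(t)\|\leq\eps$, so that $u_n:=w_n(1)$ works with no further correction; the clause about an arbitrary representing sequence is disposed of at the outset via \ref{rem:1-unitaries}\ref{1-unitaries:3}. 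You avoid the path-lifting theorem entirely: you lift only finitely many points (elementary, since unitaries in $A_\infty$ lift to unitary sequences) and rebuild the path by hand from short exponential arcs. Your anchoring observation is precisely what makes this work --- a naive telescoping via the product rule \ref{rem:1-unitaries}\ref{1-unitaries:1} would accumulate an error of order $m\tau$, a problem the paper never faces because its lifted path is global --- and the residual $2\tau$ is removed by your diagonalization over $\tau=1/k$. What each approach buys: the paper's proof is shorter and yields the bound $\eps+\delta_n$ in one stroke, at the price of citing a nontrivial (if standard) lifting theorem; yours is self-contained, using only functional calculus, compactness of $K$ and $[0,1]$, and \ref{sequence-lift}. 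Two cosmetic points: your $h$ should be the \emph{self-adjoint} logarithm, i.e.\ $c=\exp(ih)$ with $\|h\|\leq\pi$, so that $\sigma\mapsto\exp(i\sigma h)$ is the intended arc (the estimate $\|\exp(i\sigma h)-\eins\|\leq\|c-\eins\|$ then holds by spectral calculus); and your closing remark that finitely many initial terms are immaterial is the same harmless convention the paper adopts implicitly, since its own reduction via \ref{rem:1-unitaries}\ref{1-unitaries:3} likewise only controls all sufficiently large $n$ for an arbitrary representing sequence.
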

\begin{proof}
Let $\eps>0$ and $K\subset G$ be given, and let $u\in\CU_0( \big(A_{\infty,\alpha}\big)^{\alpha_\infty}_{\eps,K} )$. By \ref{rem:1-unitaries}\ref{1-unitaries:3}, it suffices to show that $u$ admits some representing sequence with this property. Let $\Pi: \ell^\infty(\IN, A)\to A_\infty$ denote the quotient map.
 Let $u:[0,1]\to \CU( \big(A_{\infty,\alpha}\big)^{\alpha_\infty}_{\eps,K} )$ be a unitary path with $u(0)=\eins$ and $u(1)=u$. Applying the unitary lifting theorem (\cf{Blackadar15}{5.1}), we can find a unitary path
\[
w=(w_n)_n: [0,1]\to\CU\big( \Pi^{-1}(A_{\infty,\alpha}) \big) \subset\ell^\infty(\IN, A)
\]
with $w(0)=\eins$ and $\Pi(w(t))=u(t)$ for all $0\leq t\leq 1$.

We have for every $g\in K$ and $0\leq t\leq 1$ that
\begin{equation}  \label{lifts:e1}
\limsup_{n\to\infty} \|\alpha_g(w_n(t))-w_n(t)\|=\|\alpha_{\infty,g}(u(t))-u(t)\| \leq \eps.
\end{equation}
By \ref{sequence-lift}, we know that the elements in $\Pi^{-1}(A_{\infty,\alpha})$ are continuous with respect to the induced action of $\alpha$ on $\ell^\infty(\IN, A)$. In particular, this implies that the map
\[
[0,1]\times G\to\ell^\infty(\IN, A),\quad (t,g)\mapsto \big( \alpha_g(w_n(t)) \big)_n
\]
is uniformly continuous on compact subsets. It thus follows from \eqref{lifts:e1} that we have
\[
\limsup_{n\to\infty}~ \max_{0\leq t\leq 1}~ \max_{g\in K}~\|\alpha_g(w_n(t))-w_n(t)\| \leq \eps.
\]
In particular, we can indeed find some sequence $\delta_n\searrow 0$ such that
\[
\max_{0\leq t\leq 1}~ \max_{g\in K}~\|\alpha_g(w_n(t))-w_n(t)\| \leq \eps+\delta_n\quad\text{for all}~n\in\IN.
\]
The sequence of unitaries $u_n = w_n(1)$ then has the desired properties.
\end{proof}

The following acts as an equivariant analogue of what it means for a unital \cstar-algebra $A$ that the commutator subgroup of $\CU(A)$ is contained in $\CU_0(A)$. It will serve as a replacement for the $K_1$-injectivity assumptions in Toms-Winter's and Dadarlat-Winter's work \cite{TomsWinter07, DadarlatWinter09} on strongly self-absorbing \cstar-algebras when we generalize some of their results to the context of semi-strongly self-absorbing actions:

\begin{defi} \label{unireg}
Let $G$ be a second-countable, locally compact group. Let $A$ be a unital \cstar-algebra and $\alpha: G\curvearrowright A$ an action. We say that $\alpha$ is unitarily regular, if for every compact set $K\subset G$ and $\eps>0$, there exists $\delta>0$ such that $uvu^*v^*\in\CU_0(A^\alpha_{\eps,K})$ for every $u,v\in\CU(A^\alpha_{\delta,K})$.
\end{defi}

We shall observe that all equivariantly $\CZ$-stable actions fall into this class, thus providing a rich class of examples.

\begin{lemma}[cf.\ {\cite[2.6]{jiang}}]
\label{jiang}
Let $B$ be a unital \cstar-algebra and $u\in\CU(B)$ a unitary with trivial $K_1$-class. Then $u\otimes\eins_\CZ\in B\otimes\CZ$ is homotopic to $\eins_{B\otimes\CZ}$.
\end{lemma}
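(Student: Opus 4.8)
The plan is to convert the hypothesis $[u]_1=0$ into an honest unitary homotopy by using the internal matrix structure that $\CZ$ supplies. First I would reformulate triviality of the $K_1$-class in the usual unstable form: there is some $n\in\IN$ such that $u\oplus\eins_n=\mathrm{diag}(u,\eins,\dots,\eins)\in\CU(M_{n+1}(B))$ is homotopic to $\eins_{n+1}$ through a continuous path of unitaries in $M_{n+1}(B)$. The whole difficulty is that this homotopy only lives after a matrix amplification, whereas we must stay inside the \emph{unital} algebra $B\otimes\CZ$; note that a single matrix algebra $M_{n+1}$ does not embed unitally into $\CZ$ for $K$-theoretic reasons, so the amplification has to be realized differently.

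The mechanism I would use is the presentation of $\CZ$ as an inductive limit of prime dimension-drop algebras $Z_{p,q}$ with $\gcd(p,q)=1$ and $p,q\to\infty$. Fixing a unital embedding $Z_{p,q}\hookrightarrow\CZ$ with $p,q$ large and coprime, it suffices to connect $u\otimes\eins$ to $\eins$ by a path of unitaries in $B\otimes Z_{p,q}$, i.e.\ by a norm-continuous unitary-valued function $f$ on $[0,1]$ with the boundary conditions $f(0)\in\CU(B\otimes M_p)$ and $f(1)\in\CU(B\otimes M_q)$. Over the interior of the interval the full amplification $M_{pq}(B)$ is available, which gives enough room to contract the class-zero unitary $u\otimes\eins$ (here one uses that $p,q$ exceed the stabilization index $n$, so that the given homotopy $u\oplus\eins_n\simeq\eins$ can be imported after suitable Whitehead rotations), while the coprimality $\gcd(p,q)=1$ is what lets the two different amplification patterns demanded at the two endpoints be reconciled into a single globally defined homotopy.

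The main obstacle is precisely this last construction: producing a two-parameter family $H(s,t)\in\CU(M_{pq}(B))$ that respects the dimension-drop boundary conditions $H(s,0)\in B\otimes M_p$ and $H(s,1)\in B\otimes M_q$ for every $s$, while interpolating from $u\otimes\eins$ to $\eins$. This is the technical heart, and it is where the essential role of $\CZ$ (as opposed to a single matrix algebra) appears, since it is the coprime dimension-drop structure that supplies enough room to absorb the padding that a finite amplification alone cannot; this is the content of the cited computation of Jiang and Su. I would also point to a more conceptual route that bypasses the explicit homotopy: combine the Künneth isomorphism $K_1(B)\cong K_1(B\otimes\CZ)$ with the $K_1$-injectivity of $\CZ$-stable \cstar-algebras to conclude directly that $u\otimes\eins_\CZ$, having trivial $K_1$-class, lies in the connected component of $\eins$.
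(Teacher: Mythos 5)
The paper does not actually prove this lemma: it is quoted directly from Jiang's (unpublished) preprint, cited as \cite[2.6]{jiang}, and that citation constitutes the paper's entire treatment. Your first route is essentially a reconstruction of the skeleton of that cited argument — reduce to a prime dimension-drop algebra $Z_{p,q}$ unitally embedded in $\CZ$, use the matrix fibres over the interior of the interval to import the unstable homotopy between $u\oplus\eins_n$ and $\eins_{n+1}$, and patch subject to the boundary conditions at $t=0$ and $t=1$. You correctly identify where the real content lies, namely the two-parameter family $H(s,t)$ respecting the dimension-drop boundary conditions for every $s$ (equivalently, killing the obstruction in $\pi_1$ of the unitary group of $M_{pq}(B)$ to filling in the square whose vertical edges are the two endpoint homotopies), and you defer exactly that step to the citation. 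Since the paper defers the whole lemma to the same source, your treatment is consistent with the paper's; just be aware that you have not produced an independent proof, and that your phrase ``connect $u\otimes\eins$ to $\eins$ by a path of unitaries in $B\otimes Z_{p,q}$, i.e.\ by a unitary-valued function $f$ on $[0,1]$'' conflates a single unitary of $B\otimes Z_{p,q}$ with a homotopy of such unitaries — your later formulation in terms of $H(s,t)$ is the correct one.

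The fallback ``conceptual route'' you offer is, however, circular and should be discarded. The statement that $\CZ$-stable \cstar-algebras are $K_1$-injective is not an independent input: it is the main theorem of the very preprint being cited here, and it is proved there \emph{from} Lemma 2.6. The same logical order is visible inside the present paper, where Proposition \ref{jiang-su-k1} derives $K_1$-injectivity of fixed-point algebras of relative commutants from this lemma, and where the concluding remarks emphasize that no direct proof of $K_1$-injectivity is known for strongly self-absorbing algebras without passing through $\CZ$-stability. So invoking $K_1$-injectivity of $B\otimes\CZ$ to prove the lemma inverts the order in which these facts are established; it would only be admissible if you could point to a proof of $K_1$-injectivity of $\CZ$-stable algebras that does not itself rest on this lemma, and no such proof is available. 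Incidentally, even within that (circular) argument the Künneth isomorphism is superfluous: functoriality of $K_1$ under the unital embedding $b\mapsto b\otimes\eins_\CZ$ already gives $[u\otimes\eins_\CZ]_1=0$, without identifying $K_1(B\otimes\CZ)$ with $K_1(B)$.
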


\begin{prop} \label{jiang-su-k1}
Let $G$ be a second-countable, locally compact group. Let $A$ be a unital \cstar-algebra and $\alpha: G\curvearrowright A$ an action. Assume $\alpha\cc\alpha\otimes\id_\CZ$. Then:
\begin{enumerate}[label={\textup{(\roman*)}},leftmargin=*]
\item for every separable, $\alpha_\infty$-invariant \cstar-subalgebra $B\subset A_{\infty,\alpha}$, the fixed-point algebra of the relative commutant $(A_{\infty,\alpha}\cap B')^{\alpha_\infty}$ is $K_1$-injective;\label{jiang-su-k1:1s}
\item the fixed-point algebra $(A_{\infty,\alpha})^{\alpha_\infty}$ is $K_1$-injective; \label{jiang-su-k1:1}
\item the commutator subgroup of $\CU\big( (A_{\infty,\alpha})^{\alpha_\infty} \big)$ is inside $\CU_0\big( (A_{\infty,\alpha})^{\alpha_\infty} \big)$. \label{jiang-su-k1:2}
\item $\alpha$ is unitarily regular. \label{jiang-su-k1:3}
\end{enumerate}
Moreover, the implications \ref{jiang-su-k1:1s}$\implies$\ref{jiang-su-k1:1}$\implies$\ref{jiang-su-k1:2}$\implies$\ref{jiang-su-k1:3} always hold. (without assuming $\alpha\cc\alpha\otimes\id_\CZ$.)
\end{prop}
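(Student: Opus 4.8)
The plan is to establish the three unconditional implications \ref{jiang-su-k1:1s}$\implies$\ref{jiang-su-k1:1}$\implies$\ref{jiang-su-k1:2}$\implies$\ref{jiang-su-k1:3} first, and then to prove \ref{jiang-su-k1:1s} itself under the standing hypothesis $\alpha\cc\alpha\otimes\id_\CZ$, which then feeds all four statements. The first implication is immediate by specializing to $B=\IC\eins$, for which $A_{\infty,\alpha}\cap B'=A_{\infty,\alpha}$. For \ref{jiang-su-k1:1}$\implies$\ref{jiang-su-k1:2} I would use that every commutator $uvu^*v^*$ has trivial $K_1$-class; hence $K_1$-injectivity of $(A_{\infty,\alpha})^{\alpha_\infty}$ forces each commutator into $\CU_0$, and since $\CU_0$ is a subgroup the entire commutator subgroup is contained there.

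The implication \ref{jiang-su-k1:2}$\implies$\ref{jiang-su-k1:3} is the delicate one, and I would argue by contradiction with the help of Lemma \ref{lifts}. Suppose $\alpha$ is not unitarily regular, witnessed by a compact set $K$ and $\eps>0$: for every $\delta>0$ there are $u,v\in\CU(A^\alpha_{\delta,K})$ with $uvu^*v^*\notin\CU_0(A^\alpha_{\eps,K})$. Taking $\delta=1/m$ produces sequences $u_m,v_m\in\CU(A^\alpha_{1/m,K})$ with persistently bad commutators $w_m:=u_mv_mu_m^*v_m^*$. The intended mechanism is to assemble $u:=(u_m)_m$ and $v:=(v_m)_m$ into unitaries of the fixed-point algebra $(A_{\infty,\alpha})^{\alpha_\infty}$, apply \ref{jiang-su-k1:2} to place $uvu^*v^*$ in $\CU_0\big((A_{\infty,\alpha})^{\alpha_\infty}\big)\subseteq\CU_0\big(\big(A_{\infty,\alpha}\big)^{\alpha_\infty}_{\eps',K}\big)$ for every $\eps'>0$ (a path of genuinely fixed unitaries is in particular $\eps'$-fixed on $K$), and then invoke Lemma \ref{lifts} to see that the representing sequence $(w_m)_m$ satisfies $w_m\in\CU_0(A^\alpha_{\eps'+\delta_m,K})$ for some $\delta_m\searrow 0$; choosing $\eps'<\eps$ and $m$ large enough that $\eps'+\delta_m<\eps$ then contradicts $w_m\notin\CU_0(A^\alpha_{\eps,K})$. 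The main obstacle is exactly the assembly step: the $u_m,v_m$ are only \emph{approximately} fixed on $K$, so $(u_m)_m$ need neither be $\alpha_\infty$-continuous nor $\alpha_\infty$-fixed, and hence need not define elements to which \ref{jiang-su-k1:2} applies. I expect to address the continuity by first reducing to compact sets $K$ with $e\in K^\circ$, so that the argument underlying Lemma \ref{sequence-lift} forces $(u_m)_m$ into $A_{\infty,\alpha}$, and to address the genuine invariance by upgrading the mere $K$-invariance through a reindexation inside the sequence algebra; keeping the commutator outside the relevant $\CU_0$ during this upgrade is the crux of the proposition.

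Finally, to prove \ref{jiang-su-k1:1s} under $\alpha\cc\alpha\otimes\id_\CZ$, I would adapt the classical argument that $\CZ$-absorption entails $K_1$-injectivity. Since $\id_\CZ$ is semi-strongly self-absorbing, Theorem \ref{equi-McDuff}\ref{equi-McDuff3} supplies a unital equivariant $*$-homomorphism $(\CZ,\id)\to(F_{\infty,\alpha}(A),\tilde\alpha_\infty)$, whose image lands in the fixed points because the action on $\CZ$ is trivial. Fix $B$ and a unitary $w$ in $C:=(A_{\infty,\alpha}\cap B')^{\alpha_\infty}$ with $[w]=0$ in $K_1(C)$. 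By continuity of $K_1$ I may pick a separable subalgebra $E$ with $C^*(w)\subseteq E\subseteq C$ and $[w]=0$ already in $K_1(E)$; as $E$ consists of $\alpha_\infty$-fixed elements it is automatically $\alpha_\infty$-invariant. Applying the reindexation Lemma \ref{reindex1} to the separable, $\alpha_\infty$-invariant subalgebra $B_0:=C^*(B\cup E)\subseteq A_{\infty,\alpha}$ yields a unital equivariant $*$-homomorphism $(\CZ,\id)\to(F_\alpha(B_0,A_\infty),\tilde\alpha_\infty)$ with fixed-point image and range commuting with $B_0\supseteq E$. A further reindexation turns this approximately central copy of $\CZ$ into an honest unital $*$-homomorphism $\Phi\colon E\otimes\CZ\to C$ with $\Phi(a\otimes\eins_\CZ)=a$ for $a\in E$ and with commuting ranges, landing in $C$ because the copy of $\CZ$ commutes with $B$, is $\alpha_\infty$-fixed, and $E\subseteq C$. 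Now Lemma \ref{jiang} gives $w\otimes\eins_\CZ\sim_h\eins$ in $E\otimes\CZ$, and applying $\Phi$ transports this to $w=\Phi(w\otimes\eins_\CZ)\sim_h\eins$ inside $C$, so that $w\in\CU_0(C)$. The principal technical point here is the bookkeeping of the last reindexation, namely producing a genuine $\Phi$ into the relative-commutant fixed-point algebra rather than merely a map modulo the annihilator.
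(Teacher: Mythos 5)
Your treatment of \ref{jiang-su-k1:1s} and of the implications \ref{jiang-su-k1:1s}$\implies$\ref{jiang-su-k1:1} (take $B=\IC\eins$) and \ref{jiang-su-k1:1}$\implies$\ref{jiang-su-k1:2} (commutators vanish in $K_1$, and $\CU_0$ is a subgroup) matches the paper's proof essentially verbatim: the paper also chooses a separable unital witness algebra containing $w$ with trivial $K_1$-class, produces a unital copy of $\CZ$ in $\big(A_{\infty,\alpha}\cap(B\cup E)'\big)^{\alpha_\infty}$, and applies Lemma \ref{jiang} through the multiplication map $E\otimes\CZ\to(A_{\infty,\alpha}\cap B')^{\alpha_\infty}$. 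Two corrections there: the proposition does not assume $A$ separable, so you may not quote Theorem \ref{equi-McDuff}\ref{equi-McDuff3}; cite Lemma \ref{relative-commutant} instead, which is stated for non-separable $A$ and is exactly what the paper uses. Also, the ``further reindexation'' you worry about is unnecessary: taking $\eins\in B_0$ forces $\ann(B_0,A_\infty)=0$, so $F_\alpha(B_0,A_\infty)=A_{\infty,\alpha}\cap B_0'$ on the nose and the copy of $\CZ$ already sits where you need it.

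The genuine gap is \ref{jiang-su-k1:2}$\implies$\ref{jiang-su-k1:3}, which you explicitly leave open (``the crux''), and neither of your two suggested repairs can close it. First, you cannot reduce to compact sets with $e\in K^\circ$: the negation of unitary regularity hands you witnesses $u_m,v_m$ that are approximately fixed only over the given $K$, and these are useless as witnesses for any larger compact set $K'$, since $\CU(A^\alpha_{\delta,K'})\subset\CU(A^\alpha_{\delta,K})$; failure of regularity does not pass from $K$ to larger compact sets. Second, no reindexation can upgrade approximate $K$-invariance to genuine $\alpha_\infty$-invariance, because the compact set is frozen: take $G=\IZ^2$ with generators $s,t$, let $\alpha_s=\id_A$ and $\alpha_t$ be nontrivial, and put $K=\set{s}$; then $\CU(A^\alpha_{\delta,K})=\CU(A)$ for every $\delta>0$, while a constant sequence $(u)_m$ with $\alpha_t(u)\neq u$ stays at a fixed positive distance from $(A_{\infty,\alpha})^{\alpha_\infty}$, so no passage to subsequences or diagonal argument helps. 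Reindexation requires approximate invariance over an exhausting family of compact sets with improving tolerances, which is precisely what the hypothesis does not give.

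You should know, however, that the step you balked at is asserted in the paper without further justification: the published proof simply states that ``since $\delta_n\to 0$'' the sequences $(u_n)_n,(v_n)_n$ represent elements of $\CU\big((A_{\infty,\alpha})^{\alpha_\infty}\big)$, and the $\IZ^2$ example above shows this assertion is not valid for arbitrary compact $K$ (it is fine when $K$ generates $G$ as a group, e.g.\ for discrete $G$ and generating $K$). So your instinct located a real soft spot of the published argument, and granting that assembly step your use of Lemma \ref{lifts} to reach the contradiction is correct and even slightly more direct than the paper's route via Remark \ref{rem:1-unitaries}\ref{1-unitaries:3}. But as a proof attempt your proposal is incomplete exactly at this point, and the repair you envisage is one that provably cannot work; any honest repair must bring more than statement \ref{jiang-su-k1:2} for the full group into play at this step --- for instance, under the standing hypothesis $\alpha\cc\alpha\otimes\id_\CZ$ one can restrict to the subgroup generated by $K$ (cocycle conjugacy passes to restrictions), whereas the unconditional implication as stated requires a genuinely different idea.
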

\begin{proof}
\ref{jiang-su-k1:1s}: Let $u\in (A_{\infty,\alpha}\cap B')^{\alpha_\infty}$ be a unitary with trivial $K_1$-class. Let $C$ be a separable \cstar-subalgebra of $(A_{\infty,\alpha}\cap B')^{\alpha_\infty}$ such that $u\in C$ and $u$ has trivial class in $K_1(C)$. Since we assumed $\alpha\cc\alpha\otimes\id_\CZ$, there exists a unital $*$-homomorphism from $\CZ$ to 
\[
\big( A_{\infty,\alpha}\cap (B\cup C)' \big)^{\alpha_\infty}= \big( A_{\infty,\alpha}\cap B' \big)^{\alpha_\infty}\cap C'
\]
by \ref{relative-commutant}. This gives rise to a commutative diagram of $*$-homomorphisms of the form
\[
\xymatrix{
C \ar[rd]_{\id_{C}\otimes\eins_\CZ} \ar[rr] && (A_{\infty,\alpha}\cap B')^{\alpha_\infty} \\
& C\otimes\CZ \ar[ru] &
}
\]
By \ref{jiang}, it follows that $u\otimes\eins$ is homotopic to $\eins$ in $C\otimes\CZ$. By the above diagram, it thus follows that $u$ is homotopic to $\eins$ in $(A_{\infty,\alpha}\cap B')^{\alpha_\infty}$. 

\ref{jiang-su-k1:1s}$\implies$\ref{jiang-su-k1:1}$\implies$\ref{jiang-su-k1:2}: This is trivial.

\ref{jiang-su-k1:2}$\implies$\ref{jiang-su-k1:3}: Assume that \ref{jiang-su-k1:2} holds and suppose that $\alpha$ is not unitarily regular. We are going to lead this to a contradiction. There exists a compact set $K\subset G$, a number $\eps>0$, a sequence $\delta_n\searrow 0$ and sequences $u_n,v_n\in\CU(A^\alpha_{\delta_n,K})$ such that $u_nv_nu_n^*v_n^*\notin\CU_0(A^\alpha_{\eps,K})$ for all $n$. Since $\delta_n\to 0$, these two sequences of unitaries represent elements $u,v\in\CU\big( (A_{\infty,\alpha})^{\alpha_\infty} \big)$. Their unitary commutator $uvu^*v^*$ is then homotopic to $\eins$ by assumption. In particular, $uvu^*v^*\in\CU_0\big( (A_{\infty,\alpha})^{\alpha_\infty}_{\eps/3,K} \big)$. By \ref{lifts}, the unitary $uvu^*v^*$ can thus be represented by a sequence in $\CU_0(A^{\alpha}_{\eps/2,K})$. So for large enough $n$, the unitaries $u_nv_nu_n^*v_n^*$ are $\eps/4$-close to elements in $\CU_0(A^\alpha_{\eps/2,K})$, and so by \ref{rem:1-unitaries}\ref{1-unitaries:3}, it follows that in fact $u_nv_nu_n^*v_n^*\in\CU_0(A^\alpha_{\eps,K})$. This is a contradiction.
\end{proof}


\section{Strong uniqueness for equivariant $*$-homomorphisms}


\begin{defi} \label{sasGue}
Let $G$ be a second-countable, locally compact group, $A$ and $B$ two unital \cstar-algebras and $\alpha: G\curvearrowright A$ and $\beta: G\curvearrowright B$ two continuous actions. Assume that $A$ is separable. Let $\phi_1, \phi_2: (A,\alpha)\to (B,\beta)$ be two unital and equivariant $*$-homomorphisms. We say that $\phi_1$ and $\phi_2$ are strongly asymptotically $G$-unitarily equivalent, if the following holds:

For every $\eps_0>0$ and compact set $K_0\subset G$, there is a continuous path of unitaries $w: [1,\infty)\to\CU(B)$ satisfying
\begin{equation} \label{sasGue:e1}
w(1)=\eins_B;
\end{equation} 
\begin{equation} \label{sasGue:e2}
\phi_2(x)=\lim_{t\to\infty} w(t)\phi_1(x)w(t)^*\quad\text{for all}~x\in A;
\end{equation}
\begin{equation} \label{sasGue:e3}
\lim_{t\to\infty}~\max_{g\in K}~\|\beta_g(w(t))-w(t)\|=0 \quad\text{for every compact set}~K\subset G;
\end{equation}
\begin{equation} \label{sasGue:e4}
\sup_{t\geq 1}~\max_{g\in K_0}~\|\beta_g(w(t))-w(t)\|\leq\eps_0.
\end{equation}
\end{defi}

\begin{reme}
We note that in the case of compact acting groups, a standard averaging argument allows one deduce that in fact, the unitary path $w: [1,\infty)\to\CU(B)$ from the above definition may be assumed to take values in the fixed-point algebra $B^\beta$.
\end{reme}

\begin{rem}
Let $\gamma: G\curvearrowright\CD$ be a semi-strongly self-absorbing action and $\alpha: G\curvearrowright A$ a $\gamma$-absorbing action on a unital \cstar-algebra. The aim of this section is to prove, under the mild extra condition of unitary regularity on $\gamma$, that unital and equivariant $*$-homomorphisms from $(\CD,\gamma)$ to  $(A,\alpha)$ are unique up to strong asymptotic $G$-unitary equivalence. The basic idea is the same as in \cite{DadarlatWinter09}, but the proof has to be carried out in a way that respects the underlying dynamics. We will begin by first proving weaker uniqueness theorems of this kind. 
\end{rem}

\begin{prop}[see~{\cite[3.5]{Szabo15ssa}}]
\label{aGi-hf}
Let $\gamma: G\curvearrowright\CD$ be a strongly self-absorbing action on a separable, unital \cstar-algebra. Then there exist sequences of unitaries $u_n,v_n\in\CU(\CD\otimes\CD)$ satisfying
\[
\max_{g\in K} \Big( \|u_n-(\gamma\otimes\gamma)_g(u_n)\|+\|v_n-(\gamma\otimes\gamma)_g(v_n)\| \Big) \stackrel{n\to\infty}{\longrightarrow} 0
\]
for every compact set $K\subset G$ and
\[
\ad(u_nv_nu_n^*v_n^*)(d\otimes\eins_\CD) \stackrel{n\to\infty}{\longrightarrow} \eins_\CD\otimes d
\]
for all $d\in\CD$.
\end{prop}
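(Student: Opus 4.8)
The plan is to reduce everything to the fact that the flip automorphism on $(\CD\otimes\CD,\gamma\otimes\gamma)$ is approximately $G$-inner, pass to a third tensor factor where the relevant leg-shift can be written as a genuine commutator of two flip-implementing unitaries, and then transport this back to two tensor factors via the self-absorbing isomorphism. Concretely, strong self-absorption provides an equivariant isomorphism $\Phi\colon(\CD,\gamma)\to(\CD\otimes\CD,\gamma\otimes\gamma)$ together with unitaries $z_n\in\CU(\CD\otimes\CD)$, approximately fixed by $\gamma\otimes\gamma$, such that $\ad(z_n)(d\otimes\eins_\CD)\to\Phi(d)$; in addition I would use the fact---established for strongly self-absorbing actions in \cite{Szabo15ssa}---that the flip is approximately $G$-inner, i.e.\ there are approximately $(\gamma\otimes\gamma)$-fixed unitaries $f_n\in\CU(\CD\otimes\CD)$ with $\ad(f_n)(a\otimes b)\to b\otimes a$.

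First I would work in $(\CD^{\otimes 3},\gamma^{\otimes 3})$ and set $s_n=f_n\otimes\eins_\CD$ and $t_n=\eins_\CD\otimes f_n$, which are approximately $\gamma^{\otimes 3}$-fixed and implement, in the limit, the flips of the first two and of the last two tensor legs, respectively. Tracking the legs through the four factors of the commutator $t_n s_n t_n^* s_n^*$ then gives
\[
\ad(t_n s_n t_n^* s_n^*)(d\otimes\eins_\CD\otimes\eins_\CD)\longrightarrow\eins_\CD\otimes d\otimes\eins_\CD .
\]
This is the algebraic heart of the argument: although a transposition of two legs is never a commutator at the level of permutations, the shift moving the content of the first leg to the second becomes one as soon as a third leg is available.

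Next I would transport this to two factors via the equivariant isomorphism $\Xi=\id_\CD\otimes\Phi\colon(\CD\otimes\CD,\gamma\otimes\gamma)\to(\CD^{\otimes 3},\gamma^{\otimes 3})$, noting $\Xi(d\otimes\eins_\CD)=d\otimes\eins_\CD\otimes\eins_\CD$. Putting $u_n=\Xi^{-1}(t_n)$ and $v_n=\Xi^{-1}(s_n)$, which are approximately $(\gamma\otimes\gamma)$-fixed, one obtains
\[
\ad(u_n v_n u_n^* v_n^*)(d\otimes\eins_\CD)\longrightarrow\Xi^{-1}(\eins_\CD\otimes d\otimes\eins_\CD)=\eins_\CD\otimes\psi(d),
\]
where $\psi=\Phi^{-1}\circ(\id_\CD\otimes\eins_\CD)\colon\CD\to\CD$. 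The main obstacle is exactly that $\psi$ is not the identity: $\Xi$ identifies $\eins_\CD\otimes d$ with $\eins_\CD\otimes\Phi(d)$ rather than with $\eins_\CD\otimes d\otimes\eins_\CD$, so the commutator implements the desired half-flip only up to the endomorphism $\psi$.

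To straighten this out I would exploit that $\psi$ is approximately $G$-inner: from $\ad(z_n)\circ(\id_\CD\otimes\eins_\CD)\to\Phi$ one gets $\ad(\zeta_n)\circ\psi\to\id_\CD$ with $\zeta_n=\Phi^{-1}(z_n)\in\CU(\CD)$ approximately $\gamma$-fixed. The key point is that a conjugate of a commutator is again a commutator, so I replace $u_n,v_n$ by $\tilde u_n=(\eins_\CD\otimes\zeta_n)u_n(\eins_\CD\otimes\zeta_n)^*$ and $\tilde v_n=(\eins_\CD\otimes\zeta_n)v_n(\eins_\CD\otimes\zeta_n)^*$, which remain approximately $(\gamma\otimes\gamma)$-fixed. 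Since $\eins_\CD\otimes\zeta_n$ commutes with $d\otimes\eins_\CD$, conjugating by it leaves the source untouched while correcting the target, giving
\[
\ad(\tilde u_n\tilde v_n\tilde u_n^*\tilde v_n^*)(d\otimes\eins_\CD)=\eins_\CD\otimes\ad(\zeta_n)(\psi(d))+o(1)\longrightarrow\eins_\CD\otimes d,
\]
which is the asserted commutator property. I expect the bookkeeping around this correction---keeping the commutator form intact while $\psi$ is absorbed---to be the only genuinely delicate step; the approximate fixedness of all unitaries involved is preserved throughout, since products and conjugates of approximately fixed unitaries remain approximately fixed.
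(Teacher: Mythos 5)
Your proposal is correct: the leg-tracking computation for $\ad(t_ns_nt_n^*s_n^*)$ is valid (each of the four steps is either an exact commutation or an application of the flip relation, and the errors only accumulate additively because conjugation by unitaries is isometric), the transport along the equivariant isomorphism $\Xi=\id_\CD\otimes\Phi$ preserves approximate $(\gamma\otimes\gamma)$-invariance, and the final conjugation by $\eins_\CD\otimes\zeta_n$ keeps the commutator form, fixes $d\otimes\eins_\CD$ exactly, and absorbs the defect $\psi=\Phi^{-1}\circ(\id_\CD\otimes\eins_\CD)$, since $\ad(\zeta_n)\circ\psi\to\id_\CD$ follows by applying $\Phi^{-1}$ to the defining property of the $z_n$.

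Your route is, however, genuinely different from the one the paper relies on. The paper does not prove this proposition at all: it quotes \cite[3.5]{Szabo15ssa}, where the statement is obtained as a special case of the commutator-uniqueness theorem for unital equivariant $*$-homomorphisms into $\gamma$-absorbing systems (\cite[3.4(iii)]{Szabo15ssa}, reproduced in this paper as Proposition \ref{commutator-ue}), applied to the two factor embeddings $\id_\CD\otimes\eins_\CD,\ \eins_\CD\otimes\id_\CD\colon(\CD,\gamma)\to(\CD\otimes\CD,\gamma\otimes\gamma)$; the target absorbs $\gamma$ because $\gamma$ is strongly self-absorbing. That route runs through the absorption and central-sequence-algebra machinery: a copy of $(\CD,\gamma)$ is placed in a relative commutant, and the commutator arises because the unitary implementing a plain approximate $G$-unitary equivalence commutes with that copy. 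In a sense both proofs need a third commuting copy of $\CD$ -- there it sits inside $A_{\infty,\alpha}\cap A'$, in your argument it is an honest third tensor factor. What each buys: the paper's route establishes the much more general uniqueness statement in one stroke (which is needed later anyway, e.g.\ for \ref{approx-uniqueness} and \ref{almost-asue1}), at the price of reindexation arguments; your construction is more elementary, stays with genuine unitaries in $\CD^{\otimes 3}$ and $\CD\otimes\CD$, and makes the algebraic mechanism completely transparent, namely that the commutator of the two transposition-type unitaries realizes a $3$-cycle, which is exactly why a third leg turns the half-flip into a commutator.

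Two smaller remarks. First, you invoke more than you use: the four steps of your computation only require $\ad(f_n^*)(d\otimes\eins_\CD)\to\eins_\CD\otimes d$ and the equivalent relation $\ad(f_n)(\eins_\CD\otimes d)\to d\otimes\eins_\CD$, so the approximately $G$-inner \emph{half}-flip suffices; the full flip is not needed. Second, this weakening also removes any worry of circularity: the half-flip for strongly self-absorbing actions is established in \cite{Szabo15ssa} prior to and independently of the commutator statement you are proving, whereas the full flip is there a consequence of the uniqueness theory, so it is worth making the weaker citation explicit.
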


In exactly the same way as in the proof of \cite[4.3]{Szabo15ssa}, this fact carries over to all semi-strongly self-absorbing actions:

\begin{cor}
\label{commutator-unitaries}
Let $(\CD,\gamma)$ be a semi-strongly self-absorbing \cstar-dynamical system. Then there exist sequences of unitaries $u_n,v_n\in\CU(\CD\otimes\CD)$ satisfying
\[
\max_{g\in K} \Big( \|u_n-(\gamma\otimes\gamma)_g(u_n)\|+\|v_n-(\gamma\otimes\gamma)_g(v_n)\| \Big) \stackrel{n\to\infty}{\longrightarrow} 0
\]
for every compact set $K\subset G$ and
\[
\ad(u_nv_nu_n^*v_n^*)(d\otimes\eins_\CD) \stackrel{n\to\infty}{\longrightarrow} \eins_\CD\otimes d
\]
for all $d\in\CD$.
\end{cor}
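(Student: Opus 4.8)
The plan is to transport the conclusion of Proposition \ref{aGi-hf} along the strong cocycle conjugacy witnessing that $\gamma$ is semi-strongly self-absorbing, by means of a diagonal reindexation. By definition there is a strongly self-absorbing action $\gamma_0$ with $\gamma\scc\gamma_0$; conjugating by the implementing isomorphism I may assume that $\gamma_0$ is an action on $\CD$ itself (still strongly self-absorbing, as this is a conjugacy invariant) which is strongly exterior equivalent to $\gamma$, say $\gamma=\gamma_0^w$ for an approximate coboundary $w$. Concretely there are unitaries $x_n\in\CU(\CD)$ with $x_n\gamma_{0,g}(x_n^*)\to w_g$ in norm, uniformly for $g$ in compact sets. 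Tensoring, one checks $\gamma\otimes\gamma=(\gamma_0\otimes\gamma_0)^{w\otimes w}$, and the unitaries $X_n:=x_n\otimes x_n$ form an approximate coboundary for the cocycle $w\otimes w$, that is $X_n(\gamma_0\otimes\gamma_0)_g(X_n^*)\to w_g\otimes w_g$ uniformly on compact sets. Since $\gamma_0$ is strongly self-absorbing, Proposition \ref{aGi-hf} supplies sequences $u_m',v_m'\in\CU(\CD\otimes\CD)$ that are asymptotically $(\gamma_0\otimes\gamma_0)$-invariant and whose commutator implements the flip $d\otimes\eins_\CD\mapsto\eins_\CD\otimes d$ in the limit.

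I would then define $u_n:=X_n u_{m(n)}' X_n^*$ and $v_n:=X_n v_{m(n)}' X_n^*$ for a sufficiently fast-growing sequence $m(n)\to\infty$. For the approximate invariance, writing $(\gamma\otimes\gamma)_g=\ad(w_g\otimes w_g)\circ(\gamma_0\otimes\gamma_0)_g$ and combining the asymptotic invariance of $u_{m(n)}'$ with $X_n(\gamma_0\otimes\gamma_0)_g(X_n^*)\to w_g\otimes w_g$, a short computation gives $\|(\gamma\otimes\gamma)_g(u_n)-u_n\|\to 0$ uniformly on compacts (and likewise for $v_n$), provided $m(n)\to\infty$. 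For the commutator condition I would use that conjugation is multiplicative, so that $u_nv_nu_n^*v_n^*=X_n\big(u_{m(n)}'v_{m(n)}'u_{m(n)}'^*v_{m(n)}'^*\big)X_n^*$, whence
\[
\ad(u_nv_nu_n^*v_n^*)(d\otimes\eins_\CD)=X_n\,\ad\big(u_{m(n)}'v_{m(n)}'u_{m(n)}'^*v_{m(n)}'^*\big)\big((x_n^*dx_n)\otimes\eins_\CD\big)X_n^*,
\]
using $X_n^*(d\otimes\eins_\CD)X_n=(x_n^*dx_n)\otimes\eins_\CD$.

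The main obstacle is exactly this last expression: the inner argument $(x_n^*dx_n)\otimes\eins_\CD$ depends on $n$, so Proposition \ref{aGi-hf}, which only provides convergence for a \emph{fixed} element, does not apply along a single diagonal index. This is precisely what the diagonal choice resolves: for each fixed $n$ the element $x_n^*dx_n$ is a genuine constant, so letting $m\to\infty$ yields $\ad\big(u_m'v_m'u_m'^*v_m'^*\big)\big((x_n^*dx_n)\otimes\eins_\CD\big)\to\eins_\CD\otimes(x_n^*dx_n)$, and conjugating back by $X_n$ gives exactly $X_n(\eins_\CD\otimes x_n^*dx_n)X_n^*=\eins_\CD\otimes d$. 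Hence for each $n$ one may choose $m(n)$ large enough to control the commutator condition on the first $n$ members of a fixed dense sequence in $\CD$, and simultaneously the invariance on a compact exhaustion, to within $1/n$; interleaving these choices produces the required sequences. This reindexation is the only real content, everything else being the bookkeeping already carried out in the proof of \cite[4.3]{Szabo15ssa}.
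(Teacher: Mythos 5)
Your proof is correct and follows essentially the same route as the paper: the paper's proof of this corollary consists of invoking the argument of \cite[4.3]{Szabo15ssa}, which is precisely the transport of the unitaries from Proposition \ref{aGi-hf} along the approximate coboundary $x_n\gamma_{0,g}(x_n^*)\to w_g$ implementing the strong cocycle conjugacy, conjugating by $x_n\otimes x_n$ and reindexing diagonally. Your explicit treatment of the only delicate point --- that the inner element $x_n^*dx_n$ depends on $n$, resolved by choosing $m(n)$ to control the first $n$ members of a dense sequence together with a compact exhaustion, followed by an isometry-of-conjugation density argument --- is exactly the intended bookkeeping.
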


\begin{nota}
Let $\alpha: G\curvearrowright A$ and $\beta: G\curvearrowright B$ be two actions on unital \cstar-algebras. Let $\eps>0$ and $K\subset G$ a compact set. Let us say that a unital $*$-homomorphism $\phi: A\to B$ is $(\eps,K)$-approximately equivariant, if $\|\phi\circ\alpha_g-\beta_g\circ\phi\|\leq\eps$ for all $g\in K$.
\end{nota}

\begin{prop}
\label{commutator-ue}
Let $G$ be a second-countable, locally compact group. Let $A$ be a unital \cstar-algebra and $\alpha: G\curvearrowright A$ an action. Let $\CD$ be a separable, unital \cstar-algebra and $\gamma: G\curvearrowright\CD$ a semi-strongly self-absorbing action. Assume $\alpha\cc\alpha\otimes\gamma$. 
Let $\phi_1,\phi_2: (\CD,\gamma)\to (A,\alpha)$
be two unital and equivariant $*$-homomorphisms. Then there exist sequences of unitaries $u_n,v_n\in\CU(A)$ satisfying
\[
\max_{g\in K} \Big( \|u_n-\alpha_g(u_n)\|+\|v_n-\alpha_g(v_n)\| \Big) \stackrel{n\to\infty}{\longrightarrow} 0
\]
for every compact set $K\subset G$ and
\[
\ad(u_nv_nu_n^*v_n^*)\circ\phi_1 \stackrel{n\to\infty}{\longrightarrow} \phi_2
\]
in point-norm.
\end{prop}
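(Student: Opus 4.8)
The plan is to import the model half-flip of Corollary~\ref{commutator-unitaries} into $A_\infty$ along a copy of $\CD$ sitting in the relative commutant of $\phi_1(\CD)\cup\phi_2(\CD)$, which provides the common target needed to link the two given embeddings.

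First I would set $B=C^*\big(\phi_1(\CD)\cup\phi_2(\CD)\big)\subseteq A$, regarded as a separable, unital, $\alpha_\infty$-invariant subalgebra of $A_{\infty,\alpha}$ via constant sequences. Since $B$ is unital we have $\ann(B,A_\infty)=0$, so $F_\alpha(B,A_\infty)=(A_\infty\cap B')^{\alpha_\infty}$. As $\alpha\cc\alpha\otimes\gamma$, Lemma~\ref{relative-commutant} supplies a unital equivariant $*$-homomorphism $\theta\colon(\CD,\gamma)\to\big((A_\infty\cap B')^{\alpha_\infty},\alpha_\infty\big)$; in particular $\theta(\CD)$ commutes with $\phi_1(\CD)$ and $\phi_2(\CD)$. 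Because of this commutation, for $i=1,2$ the formula $\Phi_i(a\otimes b)=\phi_i(a)\,\theta(b)$ defines a unital equivariant $*$-homomorphism $\Phi_i\colon(\CD\otimes\CD,\gamma\otimes\gamma)\to(A_{\infty,\alpha},\alpha_\infty)$ with $\Phi_i(d\otimes\eins)=\phi_i(d)$ and $\Phi_i(\eins\otimes d)=\theta(d)$.

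Next I would feed the model into each $\Phi_i$. Corollary~\ref{commutator-unitaries} produces unitaries $U_n,V_n\in\CU(\CD\otimes\CD)$ that are asymptotically $(\gamma\otimes\gamma)$-fixed and satisfy $\ad(U_nV_nU_n^*V_n^*)(d\otimes\eins)\to\eins\otimes d$. Setting $c_n=\Phi_1(U_nV_nU_n^*V_n^*)$ and $d_n=\Phi_2(U_nV_nU_n^*V_n^*)$, equivariance of the $\Phi_i$ shows the constituent unitaries $\Phi_i(U_n),\Phi_i(V_n)$ are asymptotically $\alpha_\infty$-fixed, while applying $\Phi_i$ to the displayed limit gives $\ad(c_n)\circ\phi_1\to\theta$ and $\ad(d_n)\circ\phi_2\to\theta$ in point-norm. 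Composing, $\ad(d_n^*c_n)\circ\phi_1\to\phi_2$. Thus $\phi_1$ and $\phi_2$ are linked, through $\theta$, by conjugation with the approximately fixed unitaries $d_n^*c_n$.

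The main obstacle is precisely that $d_n^*c_n$ is manufactured as a product of two commutators, whereas the statement demands a single commutator $u_nv_nu_n^*v_n^*$. Trying to connect $\phi_1$ to $\phi_2$ by one half-flip commutator directly fails: the second tensor leg of any such model must commute with the first, and $\phi_2(\CD)$ does not commute with $\phi_1(\CD)$, which is exactly why the auxiliary $\theta$ had to be inserted. I therefore expect the decisive step to be a repackaging argument that converts this product into a single commutator of approximately $\alpha_\infty$-fixed unitaries inducing the same point-norm limit on $\phi_1(\CD)$; the natural tool is the abundant room in the relative commutant — one can take $\theta$ to absorb $\gamma\otimes\gamma$ and run both copies of the model through shared half-flips between central copies of $\CD$ — together with a reindexation, carried out without appealing to unitary regularity. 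Once a single commutator with these properties is in place at the level of $A_\infty$, a reindexation argument combined with Lemma~\ref{sequence-lift}, which guarantees that representing sequences inherit the approximate fixed-point behaviour, produces the desired sequences $u_n,v_n\in\CU(A)$ with $\ad(u_nv_nu_n^*v_n^*)\circ\phi_1\to\phi_2$ in point-norm.
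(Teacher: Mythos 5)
Your setup is correct and matches the intended framework: producing the central copy $\theta$ via Lemma \ref{relative-commutant}, forming the product maps $\Phi_i(a\otimes b)=\phi_i(a)\theta(b)$, and checking that $\ad(d_n^*c_n)\circ\phi_1\to\phi_2$ with $d_n^*c_n$ built from approximately fixed unitaries --- all of this is fine (modulo the small slip of writing $F_\alpha(B,A_\infty)$ as a fixed-point algebra rather than the continuous part). You also correctly isolate the crux: $d_n^*c_n$ is a product of two commutators, not a single one. But that crux is exactly the content of the proposition, and your proposal does not prove it; you explicitly defer it to an unspecified ``repackaging argument.'' The direction you gesture at (running both copies of the model through shared half-flips between central copies) hits the very obstruction you already noticed: each available unitary is controlled only on specific subalgebras --- $\Phi_i(W)$ acts uncontrollably on $\phi_{3-i}(\CD)$ --- and no four-letter commutator word in such unitaries can route $\phi_1\to\theta\to\phi_2$ without one letter landing uncontrollably on the output. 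Note also that invoking Corollary \ref{commutator-unitaries} instead of the plain half-flip buys you nothing here: the commutator structure of $Z_n=U_nV_nU_n^*V_n^*$ is destroyed by the operation $Z_n\mapsto \Phi_2(Z_n)^*\Phi_1(Z_n)$, so your $c_n,d_n$ could just as well have been built from ordinary half-flip unitaries.

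The missing idea, which is how the argument the paper defers to (\cite[3.4(iii)]{Szabo15ssa}, whose first half is reproduced in this paper as the proof of Lemma \ref{almost-asue2}) actually concludes, is a conjugation trick applied one level up. The maps $\Phi_1,\Phi_2:(\CD\otimes\CD,\gamma\otimes\gamma)\to(A_{\infty,\alpha},\alpha_\infty)$ are themselves unital equivariant morphisms from a semi-strongly self-absorbing system into an absorbing target, so the plain (non-commutator) uniqueness argument --- run with a central copy of $(\CD\otimes\CD,\gamma\otimes\gamma)$ commuting with $\mathrm{C}^*(B\cup\theta(\CD))$, again supplied by Lemma \ref{relative-commutant} and a reindexation --- produces approximately $\alpha_\infty$-fixed unitaries $y_n$ with $\|y_n\Phi_1(Z_n)y_n^*-\Phi_2(Z_n)\|\to 0$. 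Then
\[
\Phi_2(Z_n)^*\,\Phi_1(Z_n)\;\approx\;\big(y_n\Phi_1(Z_n)y_n^*\big)^*\,\Phi_1(Z_n)\;=\;y_n\,\Phi_1(Z_n)^*\,y_n^*\,\Phi_1(Z_n),
\]
which is a single commutator $u_nv_nu_n^*v_n^*$ of the approximately fixed unitaries $u_n=y_n$ and $v_n=\Phi_1(Z_n)^*$; combined with your computation that $\ad\big(\Phi_2(Z_n)^*\Phi_1(Z_n)\big)\circ\phi_1\to\phi_2$ and a lifting/diagonalization as in Lemma \ref{lifts}, this yields the statement. So the single commutator does not arise from tensor gymnastics repackaging the product $d_n^*c_n$; it arises from recognizing that the two factors $\Phi_1(Z_n)$ and $\Phi_2(Z_n)$ are approximately conjugate by one fixed unitary, which turns their ``difference'' into a commutator by pure algebra.
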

\begin{proof}
This is completely analogous to the proof of \cite[3.4(iii)]{Szabo15ssa}.
\end{proof}

Let us also record the following approximate uniqueness theorem, which will have its uses in the later sections:

\begin{lemma}
\label{approx-uniqueness}
Let $G$ be a second-countable, locally compact group. Let $A$ be a unital \cstar-algebra and $\alpha: G\curvearrowright A$ an action. Let $\CD$ be a separable, unital \cstar-algebra and $\gamma: G\curvearrowright\CD$ a semi-strongly self-absorbing action. Assume $\alpha\cc\alpha\otimes\gamma$. Then:
\begin{enumerate}[label=\textup{(\roman*)},leftmargin=*] 
\item any two unital and equivariant $*$-homomorphisms $\phi^{(1)},\phi^{(2)}: (\CD,\gamma)\to (A_{\infty,\alpha},\alpha_\infty)$ are $G$-unitarily equivalent; \label{approx-uniqueness:1}
\item for every $\eps>0$, finite set $F\fin\CD$ and compact set $K_0\subset G$, there exists $\delta>0$ and a compact set $K_1\subset G$ satisfying:

Whenever $\phi^{(1)}, \phi^{(2)}: \CD\to A$ are unital, $(\delta,K_1)$-approximately equivariant $*$-homomorphisms, then there exists $u\in\CU(A^\alpha_{\eps,K_0})$ with $\phi^{(1)}(x)=_\eps u\phi^{(2)}(x)u^*$ for all $x\in F$. \label{approx-uniqueness:2}
\end{enumerate}
\end{lemma}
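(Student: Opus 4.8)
The plan is to prove the exact statement \ref{approx-uniqueness:1} inside the sequence algebra $A_{\infty,\alpha}$ first, and then to deduce the local approximate statement \ref{approx-uniqueness:2} from it by a contradiction-and-reindexation argument. It is worth stressing that neither part uses unitary regularity; only $\alpha\cc\alpha\otimes\gamma$ enters, which is why this is one of the weaker uniqueness theorems announced above -- conjugation is implemented by a single (approximately) fixed unitary, not by a fixed unitary path.

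For part \ref{approx-uniqueness:1}, put $B=\mathrm{C}^*\big(\phi^{(1)}(\CD)\cup\phi^{(2)}(\CD)\big)\subset A_{\infty,\alpha}$, a separable subalgebra that is $\alpha_\infty$-invariant because each $\phi^{(j)}$ is equivariant, so that $\alpha_{\infty,g}(\phi^{(j)}(d))=\phi^{(j)}(\gamma_g(d))$. Since $\alpha\cc\alpha\otimes\gamma$, Lemma \ref{relative-commutant} supplies a unital equivariant $*$-homomorphism $\theta:(\CD,\gamma)\to F_\alpha(B,A_\infty)$, that is, a central copy of $(\CD,\gamma)$ commuting with both $\phi^{(1)}(\CD)$ and $\phi^{(2)}(\CD)$. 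Feeding $\theta$ together with the commutator unitaries of Corollary \ref{commutator-unitaries} into the argument of Proposition \ref{commutator-ue} -- which uses nothing beyond these two ingredients -- applies verbatim to the pair $\phi^{(1)},\phi^{(2)}$ and produces unitaries $z_m\in\CU(A_{\infty,\alpha})$ with $\max_{g\in K}\|\alpha_{\infty,g}(z_m)-z_m\|\to 0$ for every compact $K\subset G$ and $\ad(z_m)\circ\phi^{(1)}\to\phi^{(2)}$ in point-norm; in other words $\phi^{(1)}$ and $\phi^{(2)}$ are approximately $G$-unitarily equivalent. To promote this to genuine $G$-unitary equivalence I would exploit that $A_{\infty,\alpha}$ is itself a sequence algebra: fixing a dense sequence in the unit ball of $\CD$ and an exhaustion $K_m\nearrow G$, passing to a fast subsequence of the $z_m$, and diagonalizing their representing sequences of unitaries in $A$ absorbs all errors and yields a single $w\in\CU\big((A_{\infty,\alpha})^{\alpha_\infty}\big)$ with $\ad(w)\circ\phi^{(1)}=\phi^{(2)}$.

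For part \ref{approx-uniqueness:2}, suppose the statement fails for some $\eps>0$, $F\fin\CD$ and compact $K_0\subset G$. Then there are $\delta_n\searrow 0$, compacta $K_n\nearrow G$ and unital, $(\delta_n,K_n)$-approximately equivariant $*$-homomorphisms $\phi^{(1)}_n,\phi^{(2)}_n:\CD\to A$ such that no $u\in\CU(A^\alpha_{\eps,K_0})$ satisfies $\phi^{(1)}_n(x)=_\eps u\phi^{(2)}_n(x)u^*$ for all $x\in F$. Assembling the sequences into $\Phi^{(j)}(d)=\big(\phi^{(j)}_n(d)\big)_n$ defines unital $*$-homomorphisms $\Phi^{(j)}:\CD\to A_\infty$; since $\delta_n\to 0$ and $K_n\nearrow G$, one checks that $\alpha_{\infty,g}(\Phi^{(j)}(d))=\Phi^{(j)}(\gamma_g(d))$ for every $g\in G$, and the right-hand side is norm-continuous in $g$, so that $\Phi^{(j)}$ takes values in $A_{\infty,\alpha}$ and is equivariant. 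By \ref{approx-uniqueness:1} there is $w\in\CU\big((A_{\infty,\alpha})^{\alpha_\infty}\big)$ with $\ad(w)\circ\Phi^{(2)}=\Phi^{(1)}$. Choose a representing sequence of unitaries $w_n\in\CU(A)$. As $w$ is $\alpha_\infty$-fixed and continuous, Lemma \ref{sequence-lift} lets me interchange the limit superior and the maximum over $K_0$, giving $\limsup_n\max_{g\in K_0}\|\alpha_g(w_n)-w_n\|=0$; hence $w_n\in\CU(A^\alpha_{\eps,K_0})$ for all large $n$. At the same time $\ad(w)\circ\Phi^{(2)}=\Phi^{(1)}$ forces $\|w_n\phi^{(2)}_n(x)w_n^*-\phi^{(1)}_n(x)\|\to 0$ for each of the finitely many $x\in F$, so that $\phi^{(1)}_n(x)=_\eps w_n\phi^{(2)}_n(x)w_n^*$ on $F$ for all large $n$. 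For such $n$ this contradicts the choice of $\phi^{(1)}_n,\phi^{(2)}_n$.

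The one genuinely delicate point is the reindexation closing part \ref{approx-uniqueness:1}: converting the sequence of approximately fixed, asymptotically conjugating unitaries $z_m$ into a single honestly $\alpha_\infty$-fixed unitary that conjugates $\phi^{(1)}$ exactly onto $\phi^{(2)}$. Because the maps already take values in a sequence algebra, this is a double-sequence diagonalization that must simultaneously secure unitarity, exact fixedness under $\alpha_\infty$, and exact conjugation along a dense set; it can equally be organized through Kirchberg's $\eps$-test. Everything else reduces either to the direct appeals to Lemma \ref{relative-commutant} and Corollary \ref{commutator-unitaries} or to the elementary compactness argument of part \ref{approx-uniqueness:2}.
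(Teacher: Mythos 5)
Your proposal is correct and follows essentially the same route as the paper: part \ref{approx-uniqueness:1} is obtained from Lemma \ref{relative-commutant} combined with the commutator-unitary argument behind Proposition \ref{commutator-ue}, upgraded to exact $G$-unitary equivalence by a reindexation in the sequence algebra, and part \ref{approx-uniqueness:2} is the same contradiction argument, assembling the approximately equivariant maps into equivariant $*$-homomorphisms into $(A_{\infty,\alpha},\alpha_\infty)$ and lifting the conjugating fixed unitary to a sequence in $\CU(A^\alpha_{\eps,K_0})$. The only cosmetic difference is that you spell out the internals of \ref{commutator-ue} rather than invoking it through the paper's commutative diagram, which changes nothing of substance.
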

\begin{proof}
\ref{approx-uniqueness:1}: Let $C$ be a separable, $\alpha_{\infty}$-invariant \cstar-subalgebra of $A_{\infty,\alpha}$ containing the images of these $*$-homomorphisms. By \ref{relative-commutant}, we can find a unital and equivariant $*$-homomorphism from $(\CD,\gamma)$ to the relative commutant $(A_{\infty,\alpha}\cap C', \alpha_\infty)$. But then, we have an induced commutative diagram of $*$-homomorphisms of the form
\[
\xymatrix{
(\CD,\gamma) \ar[rd]_{\phi^{(i)}\otimes\eins} \ar[rr]^{\phi^{(i)}} & & (A_{\infty,\alpha},\alpha_\infty) \\
& (C\otimes\CD, (\alpha_\infty|_C)\otimes\gamma) \ar[ru] &
}
\]
for $i=1,2$, where the map in the upwards direction is given by multiplication. By \ref{commutator-ue}, it follows that $\phi^{(1)}$ and $\phi^{(2)}$ are approximately $G$-unitarily equivalent. Since the target of these $*$-homomorphisms is a (\cstar-dynamical system on a) sequence algebra, a simple reindexation argument shows that they are $G$-unitarily equivalent.

\ref{approx-uniqueness:2}: Let us assume that the statement does not hold. We are going to lead this to a contradiction. There exist certain $\eps>0$, $F\fin\CD$ and $K_0\subset G$ and sequences $\delta_n\searrow 0$ and $K_n\nearrow G$, and moreover sequences $\phi^{(1)}_n, \phi^{(2)}_n: \CD\to A$ of unital, $(\delta_n,K_n)$-approximately equivariant $*$-homomorphisms such that the above conclusion does not hold. As $\delta_n\to 0$ and $G=\bigcup_{n\in\IN} K_n$, we have two unital and equivariant $*$-homomorphisms
\[
\phi^{(1)}=(\phi^{(1)}_n)_n,\ \phi^{(2)}=(\phi^{(2)}_n)_n: (\CD,\gamma)\to (A_{\infty,\alpha},\alpha_\infty).
\]
By \ref{approx-uniqueness:1}, we can find a unitary $u\in\CU\big( (A_{\infty,\alpha})^{\alpha_\infty} \big)$ with $\phi^{(2)}(x) = u\phi^{(1)}(x)u^*$ for all $x\in\CD$. Let $u_n\in A$ be a sequence of unitaries representing $u$. By \cite[2.4]{Szabo15ssa} (or essentially the same argument as in the proof of \ref{lifts}), one has $u_n\in\CU(A^\alpha_{\eps,K_0})$ for all sufficiently large $n$. Moreover, it follows directly from the definition of the $\phi^{(i)}$ that we also have
$\psi^{(2)}_n(x)=_\eps u_n\psi^{(1)}_n(x)u_n^*$ for all $x\in F$ and for sufficiently large $n$. But this gives a contradiction.
\end{proof}

\begin{prop}
\label{approx-inverse}
Let $G$ be a second-countable, locally compact group. Let $\CD$ be a separable, unital \cstar-algebra and $\gamma: G\curvearrowright\CD$ a semi-strongly self-absorbing action. Let $\alpha: G\curvearrowright A$ be an action on a unital \cstar-algebra with $\alpha\scc\alpha\otimes\gamma$. For every $\eps>0$, $F\fin A$ and compact set $K\subset G$, there exists a unital $*$-homomorphism $\psi: A\otimes\CD\to A$ that is $(\eps,K)$-approximately equivariant and satisfies $\psi(a\otimes\eins)=_\eps a$ for all $a\in F$.
\end{prop}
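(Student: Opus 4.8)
The plan is to use the \emph{strong} cocycle conjugacy $\alpha\scc\alpha\otimes\gamma$ to manufacture an honest isomorphism $A\otimes\CD\to A$ that is already approximately equivariant, and then to correct it on the first tensor factor. Unravelling Definition \ref{def:scc}, I would choose an isomorphism $\Theta\colon A\to A\otimes\CD$ with $\Theta^{-1}\circ(\alpha\otimes\gamma)_g\circ\Theta=\ad(w_g)\circ\alpha_g$ for an $\alpha$-$1$-cocycle $w$ that is an approximate coboundary, say $w_g=\lim_k x_k\alpha_g(x_k^*)$ in norm, uniformly for $g$ in compact sets, with $x_k\in\CU(A)$. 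Writing $\sigma=\Theta^{-1}$, so that $\sigma\circ(\alpha\otimes\gamma)_g=\ad(w_g)\circ\alpha_g\circ\sigma$, I would set $\psi_k=\ad(x_k^*)\circ\sigma$. A direct computation gives $\psi_k\circ(\alpha\otimes\gamma)_g=\ad(x_k^*w_g)\circ\alpha_g\circ\sigma$ and $\alpha_g\circ\psi_k=\ad(\alpha_g(x_k^*))\circ\alpha_g\circ\sigma$, whence $\|\psi_k\circ(\alpha\otimes\gamma)_g-\alpha_g\circ\psi_k\|\le 2\,\|w_g-x_k\alpha_g(x_k^*)\|$ for $g\in K$. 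Thus the honest isomorphisms $\psi_k\colon A\otimes\CD\to A$ are $(\eps_k,K)$-approximately equivariant with $\eps_k\to 0$.

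It then remains to arrange $\psi_k(a\otimes\eins)=_\eps a$ on $F$. If I can find a unitary $z\in\CU(A^\alpha_{\eps',K})$ with $\eps'$ small such that $z\,\psi_k(a\otimes\eins)\,z^*=_\eps a$ for all $a\in F$, then $\psi:=\ad(z)\circ\psi_k$ does the job: it is an honest unital $*$-homomorphism; it is $(\eps,K)$-approximately equivariant once $k$ is large and $\eps'$ is small, combining the approximate equivariance of $\psi_k$ with the product estimates of Remark \ref{rem:1-unitaries}\ref{1-unitaries:1}; and by construction $\psi(a\otimes\eins)=_\eps a$. So everything reduces to showing that the unital, approximately equivariant endomorphism $\nu_k:=\psi_k(\,\cdot\otimes\eins)\colon A\to A$ is approximately $G$-unitarily equivalent to $\id_A$ on $F$ by an approximately $\alpha$-fixed unitary. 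This is exactly the equivariant analogue of the Toms--Winter fact that the first-factor embedding is approximately unitarily equivalent to an isomorphism, and it may either be available from the prequel via Theorem \ref{equi-McDuff}, or proved by hand as follows.

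For this crux I would run an equivariant version of the Toms--Winter comparison. Since $\alpha\cc\alpha\otimes\gamma$, Lemma \ref{relative-commutant} supplies a unital, approximately equivariant, approximately central copy of $\CD$ inside $A$ commuting with the finite data at hand. The isomorphism $\psi_k$ exhibits $A$ as generated by the commuting subalgebras $\nu_k(A)$ and $\lambda_k(\CD)$, where $\lambda_k=\psi_k(\eins\otimes\,\cdot)$, and iterating $\nu_k$ produces a tower of mutually commuting copies of $\CD$. I would use the approximately $G$-inner half-flip from Corollary \ref{commutator-unitaries}, transported into $A$ through $\psi_k$, to swap adjacent copies of $\CD$, and the uniqueness Lemma \ref{approx-uniqueness}\ref{approx-uniqueness:2} to match $\lambda_k$ with the central copy of $\CD$ by an approximately $\alpha$-fixed unitary; a telescoping argument then identifies $\nu_k$ with $\id_A$ up to approximately $\alpha$-fixed unitaries.

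\textbf{Main obstacle.} The genuine difficulty is not the algebra but the \emph{dynamical bookkeeping}: every intertwining unitary produced in the comparison must be kept approximately $\alpha$-fixed on $K$, not merely chosen so that the underlying maps agree in point-norm. This is precisely why one needs the approximately $\alpha$-fixed uniqueness statement \ref{approx-uniqueness}\ref{approx-uniqueness:2} and the approximately $(\gamma\otimes\gamma)$-fixed half-flip unitaries of Corollary \ref{commutator-unitaries}, and it is the reason the \emph{strong} (rather than plain) cocycle conjugacy hypothesis is invoked at the outset: it secures the approximate equivariance of the isomorphism $\psi_k$ itself through the approximate coboundary $(x_k)_k$. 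Controlling all these unitaries simultaneously within the formalism of Notation \ref{1-unitaries} and Remark \ref{rem:1-unitaries} is where the real care lies.
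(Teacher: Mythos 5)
Your opening computation is correct: for unitaries $u,v$ one has $\|\ad(u)-\ad(v)\|\le 2\|u-v\|$, so the maps $\psi_k=\ad(x_k^*)\circ\Theta^{-1}$ are indeed honest unital $*$-homomorphisms $A\otimes\CD\to A$ that are $(\eps_k,K)$-approximately equivariant with $\eps_k\to 0$; this is essentially the same transfer mechanism the paper invokes through \cite[4.2]{Szabo15ssa}. The gap is the reduction that follows. You reduce everything to the claim that $\nu_k=\psi_k(\,\cdot\otimes\eins)\colon A\to A$ is approximately $G$-unitarily equivalent to $\id_A$, but this claim is \emph{false} for an arbitrary isomorphism $\Theta$ witnessing the strong cocycle conjugacy, and Definition \ref{def:scc} gives you no control over which $\Theta$ you get. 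Already for trivial $G$ this fails for $K$-theoretic reasons: take $\CD=\CZ$ and $A=\CC(\IT^2)\otimes\CZ$, let $\Theta_0\colon A\to A\otimes\CZ$ be an isomorphism approximately unitarily equivalent to $\id_A\otimes\eins$ (such exists by Toms--Winter), let $\phi\in\Aut(A)$ be induced by the coordinate flip of $\IT^2$, which swaps the two generators of $K_1(A)\cong\IZ^2$, and put $\Theta=\Theta_0\circ\phi^{-1}$. This $\Theta$ is a perfectly legitimate witness of $A\cong A\otimes\CZ$, but $\nu=\Theta^{-1}\circ(\id_A\otimes\eins)$ induces $\phi_*\neq\id$ on $K_1(A)$. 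Hence for $F=\{z_1\otimes\eins_\CZ\}$ (with $z_1$ a coordinate unitary of $\IT^2$) and any $\eps<2$, no unitary $y\in\CU(A)$ can satisfy $y\,\nu(z_1)\,y^*=_\eps z_1$, since unitaries at distance less than $2$ share the same $K_1$-class, while $[\nu(z_1)]=\phi_*[z_1]\neq[z_1]$. The classical Toms--Winter theorem you appeal to is an \emph{existence} statement (some isomorphism is approximately unitarily equivalent to the first-factor embedding), not a statement about the particular isomorphism handed to you. None of the tools you cite can close this: Theorem \ref{equi-McDuff} only yields that the first-factor embedding is equivariantly sequentially split (a map into the sequence algebra $A_{\infty,\alpha}$, which cannot be lifted to maps into $A$), and \ref{commutator-ue} and \ref{approx-uniqueness} are uniqueness theorems for $*$-homomorphisms with \emph{domain} $(\CD,\gamma)$, not for unital endomorphisms of $A$. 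No telescoping or intertwining argument can repair this, because the statement being telescoped towards is false.

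The paper sidesteps this problem by never moving the $A$-leg at all. It first proves the proposition in the case $(A,\alpha)=(\CD,\gamma)$ with $\gamma$ strongly self-absorbing: there an \emph{exactly} equivariant isomorphism $\phi\colon(\CD\otimes\CD,\gamma\otimes\gamma)\to(\CD,\gamma)$ exists by the very definition of strong self-absorption, and \ref{commutator-ue} legitimately applies to the two maps $\phi\circ(\id_\CD\otimes\eins)$ and $\id_\CD$ (both have domain $(\CD,\gamma)$), producing approximately $\gamma$-fixed unitaries $u_n$ with $\ad(u_n)\circ\phi\circ(\id_\CD\otimes\eins)\to\id_\CD$; the semi-strongly self-absorbing case follows by transferring along a strong cocycle conjugacy. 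For general $(A,\alpha)$ one then tensors the resulting maps $\psi_n\colon\CD\otimes\CD\to\CD$ with $\id_A$, obtaining maps $A\otimes\CD\otimes\CD\to A\otimes\CD$ that fix the first factor \emph{exactly}, and finally transfers along $\alpha\scc\alpha\otimes\gamma$ by precisely the approximate-coboundary computation you carried out, together with an $\eps/3$-argument. So your transfer step survives; what must be abandoned is the strategy of starting from an uncontrolled isomorphism $A\otimes\CD\to A$ and repairing it by a unitary afterwards.
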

\begin{proof}
First consider the case $(A,\alpha)=(\CD,\gamma)$ and that $\gamma$ is strongly self-absorbing. There exists an isomorphism $\phi: (\CD\otimes\CD,\gamma\otimes\gamma) \to (\CD,\gamma)$. By \ref{commutator-ue}, the equivariant $*$-homomorphism $\phi\circ(\id_\CD\otimes\eins): (\CD,\gamma)\to(\CD,\gamma)$ is approximately $G$-inner, say via a sequence $u_n\in\CU(A^\alpha_{\eps_n,K_n})$ with $\eps_n\searrow 0$ and $K_n\nearrow G$. Then $\psi_n=\ad(u_n)\circ\phi$ satisfies the desired property for sufficiently large $n$.

Now consider the case where $\gamma$ is semi-strongly self-absorbing. Since $\gamma$ is strongly cocycle conjugate to a strongly self-absorbing action, the assertion follows by applying \cite[4.2]{Szabo15ssa} and a standard $\eps/3$-argument.

For general $(A,\alpha)$, use the previous case and choose a sequence of unital $*$-homomorphism $\psi_n: \CD\otimes\CD\to\CD$ that are $(\eps_n,K_n)$-approximately equivariant for some $\eps_n\searrow 0$ and $K_n\nearrow G$ and satisfy $\psi_n(x\otimes\eins)\to x$ for all $x\in\CD$. Then the sequence of $*$-homomorphisms $\id_A\otimes\psi_n: A\otimes\CD\otimes\CD\to A\otimes\CD$ are also $(\eps_n,K_n)$-approximately equivariant and satisfy $\psi_n(a\otimes x\otimes\eins)\to a\otimes x$ for all $a\in A$ and $x\in\CD$. Since $\alpha\scc\alpha\otimes\gamma$, the claim now follows again by applying \cite[4.2]{Szabo15ssa} and a standard $\eps/3$-argument.
\end{proof}

\begin{cor}
\label{lift-from-D}
Let $G$ be a second-countable, locally compact group. Let $A$ be a unital \cstar-algebra and $\alpha: G\curvearrowright A$ an action. Let $\CD$ be a separable, unital \cstar-algebra and $\gamma: G\curvearrowright\CD$ a semi-strongly self-absorbing action. Assume $\alpha\scc\alpha\otimes\gamma$. Let $\phi: (\CD,\gamma)\to (A_{\infty,\alpha}, \alpha_\infty)$ be a unital and equivariant $*$-homomorphism. Then there exists a sequences $\eps_n\searrow 0$ and $K_n\nearrow G$ and a sequence of $*$-homomorphisms $\phi_n: \CD\to A$ that lifts $\phi$ and such that $\phi_n$ is $(\eps_n, K_n)$-approximately equivariant for all $n$.
\end{cor}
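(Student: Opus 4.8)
The plan is to first manufacture a \emph{reference} lift that visibly consists of genuine approximately equivariant $*$-homomorphisms, and then straighten it onto $\phi$ by conjugating with an approximately fixed unitary. Concretely, I would apply Proposition~\ref{approx-inverse} with $\eps = 1/m$, an arbitrary increasing exhaustion $K_m\nearrow G$, and (say) singleton $F$, to obtain unital $*$-homomorphisms $\psi_m: A\otimes\CD\to A$ that are $(1/m,K_m)$-approximately equivariant, and then set $\rho_m := \psi_m\circ(\eins_A\otimes\id_\CD): \CD\to A$. Since $\eins_A\otimes\id_\CD: (\CD,\gamma)\to(A\otimes\CD,\alpha\otimes\gamma)$ is genuinely equivariant and unital, each $\rho_m$ is a genuine unital $*$-homomorphism that is $(1/m,K_m)$-approximately equivariant. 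As $1/m\to 0$ and $K_m\nearrow G$, the sequence $(\rho_m)_m$ assembles to a unital $*$-homomorphism $\rho=(\rho_m)_m: \CD\to A_\infty$ that is equivariant for $\alpha_\infty$; combined with norm-continuity of $g\mapsto\gamma_g(d)$, equivariance forces $g\mapsto\alpha_{\infty,g}(\rho(d))=\rho(\gamma_g(d))$ to be continuous, so in fact $\rho$ is a unital equivariant $*$-homomorphism $(\CD,\gamma)\to(A_{\infty,\alpha},\alpha_\infty)$.

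The key observation is that, unlike $\phi$, this $\rho$ comes \emph{by construction} with a lift of the required type. It therefore remains only to transport this property across to $\phi$. Since $\alpha\scc\alpha\otimes\gamma$ implies $\alpha\cc\alpha\otimes\gamma$, Lemma~\ref{approx-uniqueness}\ref{approx-uniqueness:1} applies to the two unital equivariant $*$-homomorphisms $\phi,\rho: (\CD,\gamma)\to(A_{\infty,\alpha},\alpha_\infty)$ and yields a unitary $U\in\CU\big((A_{\infty,\alpha})^{\alpha_\infty}\big)$ with $\phi=\ad(U)\circ\rho$.

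I would then lift $U$. As $U$ is a unitary in a sequence algebra, a polar-decomposition argument produces a representing sequence of genuine unitaries $U_m\in\CU(A)$. Since $U$ is fixed by $\alpha_\infty$, the uniform-continuity-on-compacta statement of Lemma~\ref{sequence-lift} (applied exactly as in the proofs of Lemma~\ref{lifts} and Lemma~\ref{approx-uniqueness}\ref{approx-uniqueness:2}, cf.\ \cite[2.4]{Szabo15ssa}) shows that $\max_{g\in K}\|\alpha_g(U_m)-U_m\|\to 0$ for every compact $K$; after a routine diagonal reindexation this gives $\eta_m\searrow 0$ and compact sets $L_m\nearrow G$ with $U_m\in\CU(A^\alpha_{\eta_m,L_m})$. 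Now define the genuine unital $*$-homomorphisms $\phi_m:=\ad(U_m)\circ\rho_m: \CD\to A$. Since $(U_m)_m$ represents $U$ and $(\rho_m(d))_m$ represents $\rho(d)$, continuity of multiplication in $A_\infty$ gives $(\phi_m(d))_m=U\rho(d)U^*=\phi(d)$ for all $d$, so $(\phi_m)_m$ lifts $\phi$. A direct estimate for $g\in K_m\cap L_m$, replacing $\alpha_g(U_m)$ by $U_m$ (error $\le 2\eta_m\|d\|$) and $\alpha_g(\rho_m(d))$ by $\rho_m(\gamma_g(d))$ (error $\le \tfrac1m\|d\|$), shows that $\phi_m$ is $(\eps_m,K_m')$-approximately equivariant with $\eps_m:=2\eta_m+\tfrac1m\to 0$ and $K_m':=K_m\cap L_m\nearrow G$, which is the asserted conclusion.

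The only genuinely delicate point is bookkeeping rather than conceptual: arranging a \emph{single} representing sequence $(U_m)_m$ of $U$ whose approximate-fixedness errors tend to $0$ uniformly over an exhausting family of compact sets. I expect this to be the main (though standard) technical obstacle, and it is handled precisely by the uniform continuity of Lemma~\ref{sequence-lift} together with a diagonal argument, exactly as in the two earlier lemmas; everything else is formal.
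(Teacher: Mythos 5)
Your proposal is correct and follows essentially the same route as the paper's own proof: construct a reference lift from Proposition~\ref{approx-inverse}, assemble it into an equivariant $*$-homomorphism into $(A_{\infty,\alpha},\alpha_\infty)$, apply Lemma~\ref{approx-uniqueness}\ref{approx-uniqueness:1} to find a fixed unitary $u$ with $\phi=\ad(u)\circ\rho$, lift $u$ to approximately $\alpha$-invariant unitaries in $A$, and conjugate. The only cosmetic difference is that you spell out the composition with the second-factor embedding $\eins_A\otimes\id_\CD$ and the diagonal bookkeeping for the unitary representatives, both of which the paper leaves implicit.
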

\begin{proof}
By \ref{approx-inverse}, we can in particular construct some sequence of $*$-homomor\-phisms $\psi_n: \CD\to A$ such that each $\psi_n$ is $(\delta_n,K_n^{(1)})$-approximately equivariant with respect to some sequences $\delta_n\searrow 0$ and $K_n^{(1)}\nearrow G$. Then this defines a unital and equivariant $*$-homomorphism $\psi=(\psi_n)_n: \CD\to (A_{\infty,\alpha},\alpha_\infty)$. By \ref{approx-uniqueness}\ref{approx-uniqueness:1}, there is a unitary $u\in\CU\big( (A_{\infty,\alpha})^{\alpha_\infty} \big)$ with $\phi=\ad(u)\circ\psi$. Let $u_n\in\CU(A)$ be unitaries representing $u$. By \ref{lifts}, we have $u_n\in\CU(A^\alpha_{\eta_n, K_n^{(2)}})$ for some sequences $\eta_n\searrow 0$ and $K_n^{(2)}\nearrow G$. Then $\phi_n=\ad(u_n)\circ\psi_n: \CD\to A$ is a sequence of $*$-homomorphisms lifting $\phi$ and each $\phi_n$ is $(\delta_n+2\eta_n, K_n^{(1)}\cap K_n^{(2)})$-approximately equivariant for every $n$. As we have $\delta_n+2\eta_n\searrow 0$ and $K_n^{(1)}\cap K_n^{(2)}\nearrow G$, this finishes the proof. 
\end{proof}

\begin{lemma}
\label{almost-asue1}
Let $G$ be a second-countable, locally compact group. Let $A$ be a unital \cstar-algebra and $\alpha: G\curvearrowright A$ an action that is unitarily regular. Let $\CD$ be a separable, unital \cstar-algebra and $\gamma: G\curvearrowright\CD$ a semi-strongly self-absorbing action. Assume $\alpha\cc\alpha\otimes\gamma$. Let $\phi_1,\phi_2: (\CD,\gamma)\to (A,\alpha)$
be two unital and equivariant $*$-homomorphisms. 

Then for every $\eps>0$, $F\fin\CD$ and compact set $K\subset G$, there exists a unitary $w\in\CU_0(A^\alpha_{\eps,K})$ with $w\phi_1(x)w^*=_\eps \phi_2(x)$ for all $x\in F$.
\end{lemma}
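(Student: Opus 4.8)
The plan is to combine the commutator presentation of the two maps furnished by Proposition~\ref{commutator-ue} with the hypothesis that $\alpha$ is unitarily regular. Because $\alpha\cc\alpha\otimes\gamma$, Proposition~\ref{commutator-ue} yields sequences of unitaries $u_n,v_n\in\CU(A)$ with
\[
\max_{g\in L}\Big( \|u_n-\alpha_g(u_n)\|+\|v_n-\alpha_g(v_n)\| \Big) \stackrel{n\to\infty}{\longrightarrow} 0
\]
for every compact set $L\subset G$, and such that $\ad(u_nv_nu_n^*v_n^*)\circ\phi_1\to\phi_2$ in point-norm. The unitary $w$ we seek will simply be the commutator $u_nv_nu_n^*v_n^*$ for a suitably large index $n$.

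First I would invoke unitary regularity (Definition~\ref{unireg}) for the given compact set $K$ and tolerance $\eps$, obtaining a $\delta>0$ with the property that $uvu^*v^*\in\CU_0(A^\alpha_{\eps,K})$ whenever $u,v\in\CU(A^\alpha_{\delta,K})$. Applying the displayed convergence with $L=K$, I would then pick $N_1$ so that for all $n\geq N_1$ both $u_n$ and $v_n$ lie in $\CU(A^\alpha_{\delta,K})$, i.e.\ $\|\alpha_g(u_n)-u_n\|\leq\delta$ and $\|\alpha_g(v_n)-v_n\|\leq\delta$ for every $g\in K$. For such $n$, unitary regularity delivers at once that $u_nv_nu_n^*v_n^*\in\CU_0(A^\alpha_{\eps,K})$, which is exactly the ``homotopy through approximately fixed unitaries'' membership required by the conclusion.

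It then remains to arrange the conjugation estimate. Since $F\fin\CD$ is finite and $\ad(u_nv_nu_n^*v_n^*)\circ\phi_1\to\phi_2$ in point-norm, there is $N_2$ so that $\|u_nv_nu_n^*v_n^*\,\phi_1(x)\,v_nu_nv_n^*u_n^*-\phi_2(x)\|\leq\eps$ for all $x\in F$ and all $n\geq N_2$. Setting $w=u_nv_nu_n^*v_n^*$ for any $n\geq\max(N_1,N_2)$ produces a unitary satisfying simultaneously $w\in\CU_0(A^\alpha_{\eps,K})$ and $w\phi_1(x)w^*=_\eps\phi_2(x)$ for all $x\in F$, as desired.

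I do not expect a genuine obstacle here, since the substance has been absorbed into Proposition~\ref{commutator-ue} and into the very definition of unitary regularity. The only conceptual point worth stressing is the role played by that hypothesis: it is precisely what upgrades the bare commutator $u_nv_nu_n^*v_n^*$ from membership in $\CU(A^\alpha_{\eps,K})$ to membership in the path-component $\CU_0(A^\alpha_{\eps,K})$. Without it one would still recover an approximate $G$-unitary equivalence, but would lose the guarantee that the connecting path stays among approximately fixed unitaries, and this is exactly the control the lemma demands.
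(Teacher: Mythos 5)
Your proposal is correct and is essentially identical to the paper's own proof: the paper likewise chooses $\delta$ from the unitary regularity of $\alpha$ for the pair $(\eps,K)$, invokes Proposition~\ref{commutator-ue} to produce commutator unitaries implementing the approximate equivalence, and takes $w=uvu^*v^*$ for a sufficiently good (large-index) pair. The only difference is presentational — the paper compresses your explicit ``choose $n\geq\max(N_1,N_2)$'' step into a single existence statement.
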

\begin{proof}
Let $\eps>0$, $F\fin\CD$ and compact set $K\subset G$ be given. Choose a $\delta>0$ satisfying the property in \ref{unireg} with regard to $(\eps,K)$.
By \ref{commutator-ue}, there exist unitaries $u,v\in \CU(A^\alpha_{\delta,K})$ such that $\phi_2(x)=_\eps\ad(uvu^*v^*)\circ\phi_1(x)$ for all $x\in F$. Then $w=uvu^*v^*\in\CU_0(A^\alpha_{\eps,K})$ does the trick.
\end{proof}

\begin{lemma}
\label{almost-asue2}
Let $G$ be a second-countable, locally compact group. Let $A$ be a unital \cstar-algebra and $\alpha: G\curvearrowright A$ an action. Let $\CD$ be a separable, unital \cstar-algebra and $\gamma: G\curvearrowright\CD$ a semi-strongly self-absorbing action that is unitarily regular. Assume $\alpha\cc\alpha\otimes\gamma$. Let $\phi_1,\phi_2: (\CD,\gamma)\to (A,\alpha)$
be two unital and equivariant $*$-homomorphisms. 

Then for every $\eps>0$, $F\fin\CD$ and compact set $K\subset G$, there exists a unitary $w\in\CU_0(A^\alpha_{\eps,K})$ with $w\phi_1(x)w^*=_\eps \phi_2(x)$ for all $x\in F$.
\end{lemma}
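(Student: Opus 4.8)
The plan is to follow the three-line argument of Lemma \ref{almost-asue1} as closely as possible, the only difference being that we no longer have unitary regularity of the target action $\alpha$ at our disposal, so the implementing commutator unitary cannot be shown to be null-homotopic directly inside $A$. Instead, I would exploit the fact that the commutator unitary produced by Proposition \ref{commutator-ue} is not arbitrary: by construction it is (the image of) a commutator of two approximately $(\gamma\otimes\gamma)$-fixed unitaries living in $\CD\otimes\CD$. The idea is therefore to produce the null-homotopy inside $\CD\otimes\CD$, where unitary regularity of $\gamma$ is available, and then transport it into $A$ through an equivariant $*$-homomorphism.

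Concretely, I would first pass to the sequence algebra. Choose a separable, $\alpha_\infty$-invariant subalgebra $C\subset A_{\infty,\alpha}$ containing $\phi_1(\CD)\cup\phi_2(\CD)$, and use Lemma \ref{relative-commutant} (which applies since $\alpha\cc\alpha\otimes\gamma$) to obtain a unital equivariant $*$-homomorphism $\rho:(\CD,\gamma)\to(A_{\infty,\alpha}\cap C',\alpha_\infty)$. Since $\rho$ takes values in the commutant of $\phi_i(\CD)$, the formula $\Phi_i(a\otimes b)=\phi_i(a)\rho(b)$ defines two unital equivariant $*$-homomorphisms $\Phi_i:(\CD\otimes\CD,\gamma\otimes\gamma)\to(A_{\infty,\alpha},\alpha_\infty)$ with $\Phi_i(d\otimes\eins)=\phi_i(d)$ and $\Phi_i(\eins\otimes d)=\rho(d)$. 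Now invoke Corollary \ref{commutator-unitaries} to fix unitaries $u_0,v_0\in\CU(\CD\otimes\CD)$ that are as approximately $(\gamma\otimes\gamma)$-fixed on $K$ as we please and satisfy $\ad(u_0v_0u_0^*v_0^*)(d\otimes\eins)\approx\eins\otimes d$ for $d\in F$ up to an arbitrarily small tolerance. Writing $W_i=\Phi_i(u_0v_0u_0^*v_0^*)$ and applying $\Phi_i$, one gets $\ad(W_i)\circ\phi_i\approx\rho$ on $F$, whence $w_\infty:=W_2^*W_1$ satisfies $\ad(w_\infty)\circ\phi_1\approx\phi_2$ on $F$ up to a small error controlled by the choice of $u_0,v_0$.

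The heart of the matter is then to show $w_\infty\in\CU_0\big((A_{\infty,\alpha})^{\alpha_\infty}_{2\eps',K}\big)$ for a prescribed small $\eps'$. Here I would use that $\gamma\otimes\gamma$ is again unitarily regular: applying Definition \ref{unireg} to the approximately $(\gamma\otimes\gamma)$-fixed unitaries $u_0,v_0$ gives $u_0v_0u_0^*v_0^*\in\CU_0\big((\CD\otimes\CD)^{\gamma\otimes\gamma}_{\eps',K}\big)$, i.e. a path of $(\gamma\otimes\gamma)$-$\eps'$-fixed unitaries from $\eins$ to this commutator. Since $\Phi_i$ is unital, equivariant and contractive, it carries this path to a path of $\alpha_\infty$-$\eps'$-fixed unitaries from $\eins$ to $W_i$, so $W_i\in\CU_0\big((A_{\infty,\alpha})^{\alpha_\infty}_{\eps',K}\big)$; the product rule of Remark \ref{rem:1-unitaries}\ref{1-unitaries:1} then yields the claim for $w_\infty$. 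Finally I would lift $w_\infty$ to $A$: by Lemma \ref{lifts} any representing sequence of unitaries $w_n$ satisfies $w_n\in\CU_0(A^\alpha_{2\eps'+\delta_n,K})$ for some $\delta_n\searrow 0$, and since $w_n\phi_1(x)w_n^*\to w_\infty\phi_1(x)w_\infty^*$, choosing $\eps'=\eps/3$, the error tolerances small, and $n$ large produces the desired $w=w_n\in\CU_0(A^\alpha_{\eps,K})$ with $w\phi_1(x)w^*=_\eps\phi_2(x)$ for $x\in F$.

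The main obstacle is the input I used silently above, namely that unitary regularity passes from $\gamma$ to $\gamma\otimes\gamma$; this is precisely what makes the argument run with the hypothesis placed on the source action rather than the target. When $\gamma$ is genuinely strongly self-absorbing this is immediate, because the defining $G$-equivariant isomorphism $(\CD,\gamma)\cong(\CD\otimes\CD,\gamma\otimes\gamma)$ maps $\CD^\gamma_{\eps,K}$ onto $(\CD\otimes\CD)^{\gamma\otimes\gamma}_{\eps,K}$ and preserves unitary paths and the unit, so unitary regularity is transported verbatim; for semi-strongly self-absorbing $\gamma$ one reduces to this case. I would record this preservation statement as a short preliminary observation before running the argument, and otherwise the only care needed is the routine bookkeeping of the fixedness tolerances as they travel through $\Phi_i$ and the final lift.
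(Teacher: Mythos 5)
Your proof is correct and is essentially the paper's own argument: the paper also works inside $(\CD\otimes\CD,\gamma\otimes\gamma)$, obtains a null-homotopic, approximately $(\gamma\otimes\gamma)$-invariant unitary implementing the half-flip (by applying Lemma \ref{almost-asue1} to the identity and the flip automorphism of $\CD\otimes\CD$, which internally amounts to exactly your combination of commutator unitaries as in \ref{commutator-ue}/\ref{commutator-unitaries} with unitary regularity of $\gamma\otimes\gamma$), transports it into $A_{\infty,\alpha}$ via the same pair of multiplication homomorphisms $\kappa_j(a\otimes b)=\phi_j(a)\psi(b)$, and then lifts with Lemma \ref{lifts}. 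The transfer of unitary regularity from $\gamma$ to $\gamma\otimes\gamma$, which you rightly isolate as the one extra input (and which does hold, since $\gamma\otimes\gamma\scc\gamma$ for semi-strongly self-absorbing $\gamma$ and unitary regularity is preserved under strong cocycle conjugacy), is used silently by the paper in that application of \ref{almost-asue1}, so recording it as an explicit preliminary observation, as you propose, is a point of added rigor rather than a deviation.
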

\begin{proof}
This proof will follow the same line of argument as in \cite[3.4(iii)]{Szabo15ssa}.
Let $\eps>0$, $F\fin\CD$ and $K\subset G$ be given. Since $\gamma\otimes\gamma: G\curvearrowright\CD\otimes\CD$ is semi-strongly self-absorbing, \ref{almost-asue1} applies to the flip automorphism on $\CD\otimes\CD$ and the identity map. This implies that the flip automorphism can be approximately implemented by unitaries in $\CU_0\big( (\CD\otimes\CD)^{\gamma\otimes\gamma}_{\eps, K} \big)$. Consider a unitary $v\in \CU_0\big( (\CD\otimes\CD)^{\gamma\otimes\gamma}_{\eps, K} \big)$ with
\[
v(x\otimes\eins)v^*=_\eps\eins\otimes x~\text{for all}~x\in F.
\]
Since we assumed $\alpha\cc\alpha\otimes\gamma$, we can find a unital and equivariant $*$-homomorphism $\psi: (\CD,\gamma)\to (A_{\infty,\alpha}\cap A', \alpha_\infty)$. This induces two equivariant $*$-homomorphisms
\[
\kappa_1,\kappa_2: (\CD\otimes\CD,\gamma\otimes\gamma)\to (A_{\infty,\alpha},\alpha_\infty)~\text{via}~\kappa_j(a\otimes b)=\phi_j(a)\cdot \psi(b),~j=1,2.
\]
The unitary 
\[
w=\kappa_2(v^*)\cdot\kappa_1(v)\in \CU_0( \big(A_{\infty,\alpha}\big)^{\alpha_\infty}_{\eps,K} )\cdot\CU_0( \big(A_{\infty,\alpha}\big)^{\alpha_\infty}_{\eps,K} ) \subset \CU_0( \big(A_{\infty,\alpha}\big)^{\alpha_\infty}_{2\eps,K} )
\]
then satisfies
\[
\begin{array}{cll}
w\phi_1(x)w^* &=& \kappa_2(v^*)\kappa_1(v)\kappa_1(x\otimes\eins)\kappa_1(v^*)\kappa_2(v) \\
&=& \kappa_2(v^*)\kappa_1\big( v(x\otimes\eins)v^* \big) \kappa_2(v) \\
&=_\eps& \kappa_2(v^*)\kappa_1(\eins\otimes x)\kappa_2(v) \\
&=& \kappa_2(v^*)\kappa_2(\eins\otimes x)\kappa_2(v) \\
&=& \kappa_2\big( v^*(\eins\otimes x)v \big) \\
&=_{\eps}& \kappa_2(x\otimes\eins) ~=~ \phi_2(a)
\end{array}
\]
for all $g\in K$ and $x\in F$. By \ref{lifts}, we may represent $w$ by a sequence $w_n\in\CU_0\big(A^\alpha_{3\eps, K} \big)$. For sufficiently large $n$, we then have $w_n\phi_1(x)w_n^*=_{3\eps}\phi_2(x)$ for all $x\in F$, which shows our claim.
\end{proof}

The following will serve as an equivariant version of a basic homotopy lemma:

\begin{lemma}[\cf{DadarlatWinter09}{2.1}]
\label{dw-lemma}
Let $G$ be a second-countable, locally compact group. Let $\CD$ be a separable, unital \cstar-algebra and $\gamma: G\curvearrowright\CD$ a semi-strongly self-absorbing action. For every $\eps>0$, $F_1\fin\CD$ and compact set $K\subset G$, there exist $\delta>0$ and $F_2\fin\CD$ with the following property:

Let $A$ be a unital \cstar-algebra and $\alpha: G\curvearrowright A$ an action with $\alpha\scc\alpha\otimes\gamma$. Let $\psi: \CD\to A$
be a unital $*$-homomorphism that is $(\delta, K)$-equivariant, and let $u: [0,1]\to\CU(A^\alpha_{\delta,K})$ be a unitary path satisfying
\[
u(0)=\eins \quad\text{and}\quad \|[u(1), \psi(x)]\|\leq\delta\quad\text{for all}~ x\in F_2.
\] 
Then there exists a unitary path $w: [0,1]\to\CU(A^\alpha_{\eps,K})$ satisfying
\[
w(0)=\eins_A,\quad w(1)= u(1),\quad \max_{0\leq t\leq 1}~\|[w(t), \psi(x)]\|\leq\eps
\] 
for all $x\in F_1$. Moreover, we may choose $w$ in such a way that
\[
\|w(t_1)-w(t_2)\| \leq \|u(t_1)-u(t_2)\|\quad\text{for all}\quad 0\leq t_1, t_2\leq 1.
\]
\end{lemma}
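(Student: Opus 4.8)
The plan is to reduce the whole statement to one surprisingly simple formula. Suppose $S\in\CU(A)$ is a \emph{fixed} unitary that approximately implements the half-flip between $\psi(\CD)$ and a commuting central copy of $\CD$. Then the path
\[
w(t) = S^*\,u(t)\,S
\]
will do the job. The decisive feature is that $S$ enters only as a \emph{conjugator}: we never have to connect $S$ to $\eins_A$ through approximately fixed unitaries, and the entire ``path to the unit'' is supplied by the given $u$. This is exactly what allows us to dispense with any unitary regularity hypothesis on $\gamma$, in contrast to the classical argument of \cite{DadarlatWinter09}, where $K_1$-injectivity is used precisely to contract the flip to $\eins$.

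To produce $S$, first fix once and for all unitaries $s\in\CU(\CD\otimes\CD)$ coming from \ref{commutator-unitaries}, approximately fixed by $\gamma\otimes\gamma$ on $K$ to any prescribed tolerance, with $\ad(s)(x\otimes\eins)\approx\eins\otimes x$ for $x$ in a suitable finite set. Given $(A,\alpha,\psi,u)$ as in the statement, use $\alpha\cc\alpha\otimes\gamma$ and \ref{relative-commutant} (applied to the separable, $\alpha_\infty$-invariant subalgebra generated by $\psi(\CD)$ and the compact set $u([0,1])$) to obtain a unital, approximately equivariant copy $\rho\colon\CD\to A$ whose image approximately commutes with $\psi(F_1)$, with $v:=u(1)$, and with every $u(t)$. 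Writing $\Lambda(x\otimes y)=\psi(x)\rho(y)$ and $S=\Lambda(s)$, the map $\ad(S)$ approximately carries $\psi(x)$ to the central element $\rho(x)$.

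The verification is then a sequence of routine $\eps$-estimates. For centrality, for each $t$,
\[
\psi(x)\,w(t) = S^*\,\ad(S)(\psi(x))\,u(t)\,S \approx S^*\rho(x)\,u(t)\,S = S^*u(t)\,\rho(x)\,S \approx w(t)\,\psi(x),
\]
where the first approximation is the half-flip and the middle equality uses that $\rho(x)$ commutes with $u(t)$; this yields $\max_t\|[w(t),\psi(x)]\|\leq\eps$ for $x\in F_1$. Clearly $w(0)=\eins$, while $w(1)=S^*vS$ is close to $v$ since $v$ commutes with the $\rho$-legs of $S$ by the choice of $\rho$ and with the $\psi$-legs by the hypothesis $\|[v,\psi(x)]\|\leq\delta$ for $x\in F_2$; here $F_2$ is taken to contain approximants to the first-tensor-factor slices of $s$, and $\delta$ small enough that $\|S^*vS-v\|$ is as small as required. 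The length bound is automatic and in fact sharp, as conjugation by the fixed $S$ is isometric: $\|w(t_1)-w(t_2)\|=\|u(t_1)-u(t_2)\|$. Lastly $w(t)\in\CU(A^\alpha_{\eps,K})$, since both $u(t)$ and $S$ are approximately fixed on $K$.

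Two points need care, and the second is the main obstacle. To obtain $w(1)=u(1)$ \emph{exactly}, append a short central correction connecting $S^*vS$ to $v$; being uniformly close to $\eins_A$, it stays inside $\CU(A^\alpha_{\eps,K})$ and approximately commutes with $\psi(F_1)$ after a harmless reparametrisation. The genuine difficulty is the \emph{uniformity} of the constants: $\delta$ and $F_2$ must depend only on $\eps,F_1,K$ (and $\gamma$), not on $(A,\alpha,\psi,u)$. The cleanest way to arrange this is to run the construction inside a sequence algebra, performing it exactly with $\rho$ landing in a relative commutant and $s$ chosen so that $\ad(s)$ is the exact half-flip, and then to invoke the unitary path lifting of \ref{lifts} to represent the resulting exact central path by a sequence of paths in $A$ with the stated quantitative properties; the usual reindexation packaging then delivers the uniform $\delta$ and $F_2$.
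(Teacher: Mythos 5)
Your core construction is essentially the one the paper uses: the paper also conjugates $u(t)$ by a fixed approximate half-flip unitary, setting $w(t)=\kappa\big( (\psi\otimes\id_\CD)(v)^*(u(t)\otimes\eins_\CD)(\psi\otimes\id_\CD)(v)\big)$, where $v\in\CU(\CD\otimes\CD)$ approximately implements the half-flip and $\kappa: A\otimes\CD\to A$ comes from \ref{approx-inverse}. The only structural difference is where the exactly commuting copy of $\CD$ lives: in the paper it is the second tensor factor of $A\otimes\CD$ (which commutes with $u(t)\otimes\eins$ on the nose, after which $\kappa$ collapses everything back into $A$), whereas you realize it as a copy $\rho(\CD)$ in a relative commutant inside $A_\infty$ via \ref{relative-commutant} and lift at the end; both are viable, and both take $F_2$ to be the first-leg slices of the flip unitary with $\delta\sim\eta/k$. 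Your middle paragraph, including the centrality computation and the $w(1)$ estimate, is correct in outline. (One shared caveat: after appending the short correction path that makes $w(1)=u(1)$ exact, the displacement bound $\|w(t_1)-w(t_2)\|\leq\|u(t_1)-u(t_2)\|$ is no longer literally isometric; the paper's proof has the same feature, and the resulting small additive slack is harmless for the later use in \ref{homotopy commutant}.)

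The genuine problem is your final paragraph. You declare the uniformity of $(\delta,F_2)$ over all $(A,\alpha,\psi,u)$ to be the main obstacle and propose to obtain it by running the construction ``exactly'' in a sequence algebra and then invoking ``the usual reindexation packaging.'' That argument cannot work. Uniformity is a statement quantified over a proper class of systems, so a reindexation or contradiction argument would have to assemble a sequence of counterexamples $(A_n,\alpha_n,\psi_n,u_n)$ over \emph{varying} algebras into a single sequence-algebra-type object $\prod_n A_n/\bigoplus_n A_n$; there, two things break. First, the paths $u_n$ need not be equicontinuous, so $(u_n)_n$ does not define a continuous path in the product, and neither \ref{lifts} nor any path-lifting statement applies to it. Second, the counterexample data are controlled only on the fixed compact set $K$ --- each $\psi_n$ is merely $(\delta_n,K)$-approximately equivariant --- so the limiting map is not equivariant, and the limiting system is not known to absorb $\gamma$; hence \ref{relative-commutant} cannot be applied to it. Fortunately, the detour is also unnecessary: in your own construction the only thing extracted from $\rho$ is \emph{exact} commutation with $\psi(\CD)$ and $u([0,1])$, so every error term (centrality $\leq 2\eta$ on $F_1$; $\|[u(1),S]\|\leq 2\eta+k\delta$; invariance of $S$ on $K$ up to roughly $3\eta+k\delta$) depends only on $s$, its decomposition $s=_\eta\sum_{i=1}^k s_i\otimes t_i$, on $\eta$, $k$ and $\delta$ --- all of which are fixed \emph{before} the data $(A,\alpha,\psi,u)$ is given. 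Thus $\delta=\eta/k$ and $F_2=\{s_1,\dots,s_k\}$ are already uniform, exactly as in the paper's explicit choice; the correct fix is to delete the reindexation claim and record these data-independent estimates instead.
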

\begin{proof}
Let $\eps>0$, $F_1\fin\CD$ and $K\subset G$ be given. Let $\eta=\eps/15$. By \ref{aGi-hf}, the action $\gamma$ has approximately $G$-inner half-flip, so choose a unitary $v\in\CU(\CD\otimes\CD)$ with 
\begin{equation} \label{dw-lemma:e1}
\max_{g\in K}~ \|v-(\gamma\otimes\gamma)_g(v)\|\leq\eta \quad\text{and}\quad \eins_\CD\otimes x =_\eta v(x\otimes\eins_\CD)v^*
\end{equation} 
for all $x\in F_1$. Choose contractions $s_1,\dots,s_k,t_1,\dots,t_k\in\CD$ with 
\begin{equation} \label{dw-lemma:e2}
v=_\eta\sum_{i=1}^k s_i\otimes t_i.
\end{equation} 
Then set
\begin{equation} \label{dw-lemma:e3}
\delta=\eta/k=\eps/15k\quad\text{and}\quad F_2=\set{s_1,\dots,s_k}.
\end{equation}
Now let $(A,\alpha)$, $\psi: \CD\to A$ and $u: [0,1]\to\CU(A^{\alpha}_{\delta,K})$ be as in the assertion.
Apply \ref{approx-inverse} and choose a unital and equivariant $*$-homomorphism $\kappa: A\otimes\CD\to A$ that is $(\eta, K)$-equivariant and satisfies 
\begin{equation} \label{dw-lemma:e4}
\kappa(a\otimes\eins)=_\eta a\quad\text{for all}~a\in \psi(F_1)\cup\set{u(1)}.
\end{equation}  
We define a new unitary path $w: [0,1]\to\CU(A)$ via
\[
w(t) = \kappa\Big( (\psi\otimes\id_\CD)(v)^*\cdot(u(t)\otimes\eins_\CD) \cdot(\psi\otimes\id_\CD)(v)\Big).
\]
We have for all $g\in K$ and $0\leq t\leq 1$ that
\[
\begin{array}{ccl}
\alpha_g(w(t)) &=_\eta& \kappa\circ(\alpha\otimes\gamma)_g\Big( (\psi\otimes\id_\CD)(v)^*\cdot(u(t)\otimes\eins_\CD) \cdot(\psi\otimes\id_\CD)(v) \Big) \\
&\hspace{4mm}\stackrel{\eqref{dw-lemma:e1}}{=}_{\hspace{-2mm}2\eta+2\delta}& \kappa\Big( (\psi\otimes\id_\CD)(v)^*\cdot(\alpha_g(u(t))\otimes\eins_\CD) \cdot(\psi\otimes\id_\CD)(v) \Big) \\
& =_\delta &  \kappa\Big( (\psi\otimes\id_\CD)(v)^*\cdot(u(t)\otimes\eins_\CD) \cdot(\psi\otimes\id_\CD)(v) \Big) \\
& \hspace{-2mm}=& w(t)
\end{array}
\]
Moreover, we have
\[
w(0) = \kappa\Big( (\psi\otimes\id_\CD)(v)^*\cdot(u(0)\otimes\eins_\CD) \cdot(\psi\otimes\id_\CD)(v) \Big) = \eins_A
\]
and
\[
\begin{array}{ccl}
w(1) &=& \kappa\Big( (\psi\otimes\id_\CD)(v)^*\cdot(u(1)\otimes\eins_\CD) \cdot(\psi\otimes\id_\CD)(v) \Big) \\

&\hspace{-1mm}\stackrel{\eqref{dw-lemma:e2}}{=}_{\hspace{-2mm}\eta}& \dst \kappa\Big( (\psi\otimes\id_\CD)(v)^*\cdot(u(1)\otimes\eins_\CD) \cdot\sum_{i=1}^k \psi(s_i)\otimes t_i \Big) \\

&\hspace{1mm}\stackrel{\eqref{dw-lemma:e3}}{=}_{\hspace{-2.5mm}k\cdot\delta}& \dst \kappa\Big( (\psi\otimes\id_\CD)(v)^*\cdot\sum_{i=1}^k \psi(s_i)\otimes t_i\cdot(u(1)\otimes\eins_\CD) \Big) \\

&\stackrel{\eqref{dw-lemma:e2}}{=}_{\hspace{-2mm}\eta}& \kappa(u(1)\otimes\eins_\CD) ~\stackrel{\eqref{dw-lemma:e4}}{=}_{\hspace{-2mm}\eta}~ u(1).
\end{array}
\]
Lastly, we have for all $x\in F_1$ and $0\leq t\leq 1$ that
\[
\begin{array}{ccl}
w(t)\psi(x) &\stackrel{\eqref{dw-lemma:e4}}{=}_{\hspace{-2mm}\eta}& \kappa\Big( (\psi\otimes\id_\CD)(v)^*\cdot(u(t)\otimes\eins_\CD) \cdot(\psi\otimes\id_\CD)(v) \Big)\cdot \kappa\big( \psi(x)\otimes\eins_\CD \big) \\
&=& \kappa\Big( (\psi\otimes\id_\CD)(v)^*\cdot (u(t)\otimes\eins_\CD)\cdot (\psi\otimes\id_\CD)(v(x\otimes\eins_\CD)) \Big) \\
&\stackrel{\eqref{dw-lemma:e1}}{=}_{\hspace{-2mm}\eta}& \kappa\Big( (\psi\otimes\id_\CD)(v)^*\cdot (u(t)\otimes\eins_\CD)\cdot (\psi\otimes\id_\CD)((\eins_\CD\otimes x)v) \Big) \\
&=& \kappa\Big( (\psi\otimes\id_\CD)(v^*(\eins_\CD\otimes x))\cdot (u(t)\otimes\eins_\CD)\cdot (\psi\otimes\id_\CD)(v) \Big) \\
&\stackrel{\eqref{dw-lemma:e1}}{=}_{\hspace{-2mm}\eta}& \kappa\Big( (\psi\otimes\id_\CD)((x\otimes\eins_\CD)v^*)\cdot (u(t)\otimes\eins_\CD)\cdot (\psi\otimes\id_\CD)(v) \Big) \\
&\stackrel{\eqref{dw-lemma:e4}}{=}_{\hspace{-2mm}\eta}& \psi(x)w(t).
\end{array}
\]
Combining these calculations with the definition of $\eta=\eps/15$ and $\delta=\eta/k$, we get that indeed
\[
w(0)=\eins_A,\quad w(1)=_{\eps/3} u(1),\quad \max_{0\leq t\leq 1}~\max_{g\in K}~\|\alpha_g(w(t))-w(t)\|\leq\eps/3,
\]
and $\dst\max_{0\leq t\leq 1}~\|[w(t), \psi(x)]\|\leq\eps/3$ for all $x\in F_1$. Since we may assume $\eps<1$, we have $\|\eins-w(1)^*u(1)\|\leq 1/3$. By functional calculus, we can write $w(1)^*u(1)=\exp(ia)$ for a self-adjoint element $a\in A$. Again by functional calculus, the unitary path $v: [0,1]\to\CU(A), t\mapsto \exp(ita)$ satisfies $v(0)=\eins$, $v(1)=w(1)^*u(1)$ and $\|v(t)-\eins\|\leq \|v(1)-\eins\|\leq\eps/3$ for all $0\leq t\leq 1$. By considering the extended unitary path
\[
[0,2]\ni t\mapsto \begin{cases} w(t) &,\quad 0\leq t\leq 1 \\
w(1)v(t-1) &,\quad 1\leq t\leq 2 \end{cases}
\]
and rescaling, it is thus clear how to get a new unitary path $w: [0,1]\to\CU(A)$ satisfying the desired properties. (Note that by simple application of the triangle inequality, the $\eps/3$-approximations become $\eps$-approximations for the new path.)
\end{proof}

\begin{rem}
The very last part of the statement of \ref{dw-lemma} will be mostly irrelevant for proving the main results of this paper. However, we record a useful technical consequence for future use. The reader is advised to skip the following Lemma if he or she is only interested in the main result of this section.
\end{rem}

\begin{lemma} \label{homotopy commutant}
Let $G$ be a second-countable, locally compact group. Let $\CD$ be a separable, unital \cstar-algebra and $\gamma: G\curvearrowright\CD$ a strongly self-absorbing action. Let $A$ be a unital \cstar-algebra and $\alpha: G\curvearrowright A$ an action with $\alpha\scc\alpha\otimes\gamma$. Let $\psi: (\CD,\gamma)\to (A_{\infty,\alpha}, \alpha_\infty)$ be a unital and equivariant $*$-homomorphism. Then
\[
\CU_0\Big( \big( A_{\infty,\alpha}\cap\psi(\CD)' \big)^{\alpha_\infty} \Big) = \CU_0\big( (A_{\infty,\alpha})^{\alpha_\infty} \big) \cap \psi(\CD)'.
\]
In other words, a unitary in the fixed-point algebra $\big( A_{\infty,\alpha}\cap\psi(\CD)' \big)^{\alpha_\infty}$ is homotopic to $\eins$ precisely when it is homotopic to $\eins$ inside the larger fixed-point algebra $(A_{\infty,\alpha})^{\alpha_\infty}$.
\end{lemma}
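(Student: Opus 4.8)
### Proof proposal

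The plan is to prove the two inclusions separately, with the nontrivial content lying in showing
\[
\CU_0\big( (A_{\infty,\alpha})^{\alpha_\infty} \big) \cap \psi(\CD)' \subseteq \CU_0\Big( \big( A_{\infty,\alpha}\cap\psi(\CD)' \big)^{\alpha_\infty} \Big).
\]
The reverse inclusion is essentially formal: any unitary path inside the smaller fixed-point algebra $\big( A_{\infty,\alpha}\cap\psi(\CD)' \big)^{\alpha_\infty}$ is in particular a path of fixed unitaries in $(A_{\infty,\alpha})^{\alpha_\infty}$ that commute with $\psi(\CD)$, so starting at $\eins$ it witnesses membership in the right-hand side. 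I would dispose of this direction in a single sentence.

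For the hard inclusion, take a unitary $u$ in the fixed-point algebra $(A_{\infty,\alpha})^{\alpha_\infty}$ that commutes with $\psi(\CD)$ and is homotopic to $\eins$ through fixed unitaries (not necessarily commuting with $\psi(\CD)$). The idea is to \emph{straighten} the given homotopy so that it respects the commutant, using Lemma \ref{dw-lemma} as the key tool. First I would reduce to finite data: fix a dense sequence in $\CD$, and for each $m$ choose $\eps_m\searrow 0$, finite sets $F_1^{(m)}\fin\CD$ exhausting a dense subset, and compact sets $K_m\nearrow G$. Given the homotopy $u(\cdot)$ witnessing $u\in\CU_0\big( (A_{\infty,\alpha})^{\alpha_\infty} \big)$, I would lift it, via Lemma \ref{lifts} applied along the path together with Lemma \ref{sequence-lift}, to a path of unitaries in $A$ that are approximately fixed by $\alpha$ on $K_m$ up to $\eps_m$; simultaneously, since $u$ commutes with $\psi(\CD)$ and $\psi$ is (approximately) liftable by Corollary \ref{lift-from-D}, the endpoint $u(1)$ approximately commutes with the lifted generators of $\psi(\CD)$. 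This is exactly the hypothesis configuration of Lemma \ref{dw-lemma}: a $(\delta,K)$-equivariant unital $*$-homomorphism $\psi_m\colon\CD\to A$, a path $u(\cdot)$ in $\CU(A^\alpha_{\delta,K})$ with $u(0)=\eins$ and $\|[u(1),\psi_m(x)]\|\leq\delta$ on a finite set.

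Applying Lemma \ref{dw-lemma} at each finite stage produces, for the given tolerances, a unitary path $w_m\colon[0,1]\to\CU(A^\alpha_{\eps_m,K_m})$ with $w_m(0)=\eins$, $w_m(1)=u(1)$, and $\max_t\|[w_m(t),\psi_m(x)]\|\leq\eps_m$ for all $x\in F_1^{(m)}$. Here $u(1)$ is a representing unitary of $u$ at level $m$. The plan is then to \emph{reindex}: the sequence of paths $(w_m)_m$, assembled componentwise, defines a single continuous path $w\colon[0,1]\to\CU\big( (A_{\infty,\alpha})^{\alpha_\infty} \big)$ with $w(0)=\eins$, $w(1)=u$, and—because the commutator estimates go to zero along the finite exhaustion—$w(t)\in A_{\infty,\alpha}\cap\psi(\CD)'$ for every $t$. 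Thus $w$ is the desired homotopy inside $\big( A_{\infty,\alpha}\cap\psi(\CD)' \big)^{\alpha_\infty}$, proving $u$ lies in the left-hand side.

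The main obstacle I anticipate is the reindexation itself: Lemma \ref{dw-lemma} is a \emph{finitary} statement producing paths in $A$ for finite $F_1$ and fixed $(\eps,K)$, whereas I need a single \emph{continuous} path in the sequence algebra commuting \emph{exactly} with all of $\psi(\CD)$ and fixed \emph{exactly} by $\alpha_\infty$. Passing from the finite-stage paths $w_m$ to one genuine path in $A_{\infty,\alpha}$ requires care on two fronts: continuity in $t$ must survive the diagonal/reindexation (one must ensure the modulus of continuity is controlled uniformly in $m$, which is why I would want to invoke the norm-Lipschitz clause $\|w(t_1)-w(t_2)\|\leq\|u(t_1)-u(t_2)\|$ at the end of Lemma \ref{dw-lemma}), and the endpoints must be matched so that the representing sequence of $u$ is coherent across levels. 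I expect to handle this with a standard diagonal argument, choosing the finite data $F_1^{(m)}$, $\eps_m$, $K_m$ to increase/decrease appropriately and invoking the continuity of elements of $A_{\infty,\alpha}$ from Lemma \ref{sequence-lift} to guarantee that the assembled path lands in the continuous part; this is the step where the bulk of the technical bookkeeping resides.
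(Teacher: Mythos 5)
Your proposal is correct and follows essentially the same route as the paper's proof: lift the homotopy to $\ell^\infty(\IN,A)$ with uniform control via Lemma \ref{sequence-lift}, lift $\psi$ via Corollary \ref{lift-from-D}, straighten the path blockwise with Lemma \ref{dw-lemma}, and reassemble by a diagonal argument. You also correctly identified the one genuinely delicate point — that continuity of the reassembled path must be controlled uniformly across components — and the fix is exactly the paper's: the Lipschitz clause of Lemma \ref{dw-lemma}, which the paper exploits through the continuous function $f(t_1,t_2)=\sup_n\|w_n(t_1)-w_n(t_2)\|$ vanishing on the diagonal.
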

\begin{proof}
The inclusion from left to right is trivial. So let 
\[
u\in\CU\Big( \big( A_{\infty,\alpha}\cap\psi(\CD)' \big)^{\alpha_\infty} \Big)
\] 
be a unitary and let $u: [0,1]\to\CU\big( (A_{\infty,\alpha})^{\alpha_\infty} \big)$ be a unitary path with $u(0)=\eins$ and $u(1)=u$. Let $\Pi: \ell^\infty(\IN,A)\to A_\infty$ be the quotient map. Applying the unitary lifting theorem (\cf{Blackadar15}{5.1}), we can find a unitary path
\[
w=(w_n)_n: [0,1]\to\CU\big( \Pi^{-1}(A_{\infty,\alpha}) \big) \subset\ell^\infty(\IN, A)
\]
with $w(0)=\eins$ and $\Pi(w(t))=u(t)$ for all $0\leq t\leq 1$.

We have for every $g\in G$ and $0\leq t\leq 1$ that
\begin{equation}  \label{homot:e1}
\limsup_{n\to\infty} \|\alpha_g(w_n(t))-w_n(t)\|=\|\alpha_{\infty,g}(u(t))-u(t)\| = 0.
\end{equation}
By \ref{sequence-lift}, we know that the elements in $\Pi^{-1}(A_{\infty,\alpha})$ are continuous with respect to the induced action of $\alpha$ on $\ell^\infty(\IN, A)$. In particular, this implies that the map
\[
[0,1]\times G\to\ell^\infty(\IN, A),\quad (t,g)\mapsto \big( \alpha_g(w_n(t)) \big)_n
\]
is uniformly continuous on compact subsets. It thus follows from \eqref{homot:e1} that we have
\begin{equation} \label{homot:e2}
\limsup_{n\to\infty}~ \max_{0\leq t\leq 1}~ \max_{g\in K}~\|\alpha_g(w_n(t))-w_n(t)\| = 0
\end{equation}
for all compact sets $K\subset G$. Also denote the function $f: [0,1]\times [0,1]\to [0,\infty)$ given by $f(t_1,t_2)=\sup_{n\in\IN} \|w_n(t_1)-w_n(t_2)\|$, which is continuous by assumption and vanishes on the diagonal.

Next, it follows from \ref{lift-from-D} that there exist sequences $\eta_n\searrow 0$ and $K_n\nearrow G$ such that $\psi$ lifts to a sequence of $*$-homomorphisms $\psi_n: \CD\to A$ so that $\psi_n$ is $(\eta_n, K_n)$-equivariant. Let $\eps_j\searrow 0$ be a sequence and $F_j\fin\CD$ an increasing sequence with dense union. For every $j\in\IN$, apply \ref{dw-lemma} and choose $\delta_j>0$ and $H_j\fin\CD$ satisfying the given property for the triple $(\eps_j, K_j, F_j)$. For every $j\in\IN$, we can apply \eqref{homot:e2} together with the approximate centrality of the sequence $w_n(1)$ to find a strictly increasing sequence $n_j\in\IN$ so that
\[
\max_{0\leq t\leq 1}~ \max_{g\in K}~\|\alpha_g(w_n(t))-w_n(t)\| \leq \delta_j
\]
and
\[
\max_{x\in H_j} \|[w_n(1),\psi_n(x)]\|\leq\delta_j,\quad \eta_n\leq\delta_j
\]
for all $n\geq n_j$. Then by choice of the involved constants and \ref{dw-lemma}, we can find unitary paths $v_n: [0,1]\to\CU(A)$ satisfying
\begin{equation} \label{homot:e3}
v_n(0)=\eins,\quad v_n(1)=w_n(1)
\end{equation}
\begin{equation} \label{homot:e4}
\|v_n(t_1)-v_n(t_2)\|\leq f(t_1,t_2)\quad\text{for all}~0\leq t_1, t_2\leq 1
\end{equation}
and
\begin{equation} \label{homot:e5}
\max_{0\leq t\leq 1}~ \max_{g\in K_j}~\|\alpha_g(v_n(t))-v_n(t)\| \leq \eps_j
\end{equation}
\begin{equation} \label{homot:e6}
\max_{0\leq t\leq 1}~\max_{x\in F_j}~ \|[v_n(t),\psi_n(x)]\|\leq\eps_j
\end{equation}
for all $n_j\leq n<n_{j+1}$ and $j\in\IN$.

Now it follows from \eqref{homot:e4} and the continuity of $f$ that the map
\[
v: [0,1]\to\CU(A_\infty),\quad t\mapsto \Pi\big( (v_n(t))_n \big)
\]
is a continuous unitary path. It follows from \eqref{homot:e3} that $v(1)=u$. By the choices of $\eps_j$, $F_j\fin\CD$ and $K_j\subset G$, it follows from \eqref{homot:e5} and \eqref{homot:e6} that $v(t)\in \big(A_{\infty,\alpha}\cap\psi(\CD)'\big)^{\alpha_\infty}$ for all $0\leq t\leq 1$. This finishes the proof.
\end{proof}

Now comes the main result of this section, which is a strengthened uniqueness theorem for equivariant $*$-homomorphsms from $(\CD,\gamma)$. For the statement, recall the notion of strong cocycle conjugacy; see \ref{def:scc}.

\begin{theorem}
\label{asue}
Let $G$ be a second-countable, locally compact group. Let $A$ be a unital \cstar-algebra and $\alpha: G\curvearrowright A$ an action. Let $\CD$ be a separable, unital \cstar-algebra and $\gamma: G\curvearrowright\CD$ a semi-strongly self-absorbing action. Assume $\alpha\scc\alpha\otimes\gamma$, and that either $\gamma$ or $\alpha$ is unitarily regular. Then any two unital and equivariant $*$-homomorphisms $\phi_1,\phi_2: (\CD,\gamma)\to (A,\alpha)$ are strongly asymptotically $G$-unitarily equivalent.
\end{theorem}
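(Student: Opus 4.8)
The plan is to upgrade the approximate uniqueness statements already proved (Lemma \ref{almost-asue1} and Lemma \ref{almost-asue2}) into an honest strong asymptotic $G$-unitary equivalence by an iterative, telescoping argument over a sequence of finite subsets, compact sets, and tolerances, while using the homotopy Lemma \ref{dw-lemma} to replace individual approximating unitaries by genuine continuous paths that respect the dynamics. The whole point of the theorem is that one has to produce a \emph{single} continuous path $w: [1,\infty)\to\CU(A)$ with $w(1)=\eins_A$, implementing the conjugacy $\phi_2=\lim_{t\to\infty} w(t)\phi_1(\cdot)w(t)^*$, subject to the two equivariance conditions \eqref{sasGue:e3} and \eqref{sasGue:e4}; the new ingredient over the classical Dadarlat--Winter argument is the bookkeeping of the fixed-point-approximation constants along $G$.

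First I would fix an exhaustion $F_1\fin F_2\fin\cdots$ of $\CD$ with dense union, an exhaustion $K_1\nearrow G$, and a summable sequence $\eps_n\searrow 0$. The iteration produces unitaries $w_n\in\CU_0(A^\alpha_{\eps_n,K_n})$ such that $w_n\cdots w_1\phi_1(\cdot)w_1^*\cdots w_n^*$ converges in point-norm to $\phi_2$ on $\bigcup_n F_n$, by alternately applying Lemma \ref{almost-asue1} or \ref{almost-asue2} (whichever hypothesis on unitary regularity we are using) to the two equivariant $*$-homomorphisms at each stage. Crucially, each $w_n$ lands in $\CU_0(A^\alpha_{\eps_n,K_n})$, so it comes with a continuous unitary path from $\eins_A$ to $w_n$; the product rules in Remark \ref{rem:1-unitaries}\ref{1-unitaries:1} keep the cumulative products inside $\CU_0(A^\alpha_{\sigma,K})$ with $\sigma=\sum_n\eps_n$ controlled, and ensure the partial products are themselves in the correct $\CU_0$-classes. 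I would then concatenate the individual homotopies $\eins_A\to w_n$ (precomposed with the partial product $w_{n-1}\cdots w_1$) on successive subintervals of $[1,\infty)$, say $[n,n+1]$, reparametrizing to obtain one continuous path $w(t)$. Condition \eqref{sasGue:e3} follows because for fixed compact $K$, eventually $K\subset K_n$ and the tails contribute arbitrarily small defect; condition \eqref{sasGue:e4} is arranged by making $\sum_n\eps_n$ small relative to $\eps_0$ after discarding finitely many initial terms, and absorbing the initial segment's defect into the compact set $K_0$; condition \eqref{sasGue:e2} is the point-norm convergence we built in; and \eqref{sasGue:e1} is immediate from $w(1)=\eins_A$.

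The main obstacle is the interplay between two competing requirements that do not obviously coexist: the path must stay inside the $(\eps,K)$-approximately-fixed unitaries \emph{throughout} each homotopy step (not merely at the endpoints), and the cumulative conjugation must still converge to $\phi_2$. The endpoints are controlled by the almost-uniqueness lemmas, but the \emph{intermediate} unitaries along each homotopy need not approximately commute with $\phi_1(F_n)$, which is exactly the difficulty that Lemma \ref{dw-lemma} is designed to resolve: given the endpoint unitary approximately commuting with $\psi(F_2)$ and the equivariant bound, it supplies a replacement path that approximately commutes with $\psi(F_1)$ and stays within the enlarged fixed-point tolerance, with no increase in its modulus of continuity. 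Thus at each stage I would apply Lemma \ref{dw-lemma} (with $\psi$ the current conjugate of $\phi_1$, made $(\delta,K)$-approximately equivariant via Corollary \ref{lift-from-D}) to convert the endpoint approximation into a controlled path, choosing the stage-$n$ tolerances $\delta_n$ and finite sets $H_n$ ahead of time so that the previous stage's output meets \ref{dw-lemma}'s hypotheses. Making these nested choices of $(\delta_n,H_n,\eps_n,F_n,K_n)$ consistent across the whole induction, so that every invocation of \ref{almost-asue1}/\ref{almost-asue2}, \ref{lift-from-D} and \ref{dw-lemma} has its input guaranteed by the previous step, is the technical heart of the argument and where the care demanded by the dynamics really enters.
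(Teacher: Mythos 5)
Your overall strategy --- telescoping via the almost-uniqueness lemmas \ref{almost-asue1}/\ref{almost-asue2} for the endpoints and the homotopy lemma \ref{dw-lemma} to repair the interiors of the paths --- contains the right ingredients, and you correctly identify the central difficulty. But the specific telescoping structure you chose, namely cumulative products $P_n=w_n\cdots w_1$ of correction unitaries, breaks down at condition \eqref{sasGue:e3}, and this is not a bookkeeping detail that nested choices can absorb. By your own accounting (via Remark \ref{rem:1-unitaries}), the only bound on the equivariance defect of $P_n$ is the sum $\sum_{j\le n}\eps_j$: the defects of the early corrections $w_1,w_2,\dots$ persist in every later partial product, so along the tail of your path the defect is only bounded by a fixed positive quantity $\sigma$, and on a compact set $K$ not contained in the early $K_j$ the early factors are not controlled at all. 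Condition \eqref{sasGue:e3} demands that the defect tend to \emph{zero} as $t\to\infty$, not merely that it stay below $\eps_0$; your justification (``the tails contribute arbitrarily small defect'') addresses the late factors but not the persistent head. The same accumulation poisons your later invocations of Lemma \ref{dw-lemma}: there you need the reference homomorphism $\psi$ (your ``current conjugate of $\phi_1$'') to be $(\delta_n,K_n)$-approximately equivariant with $\delta_n\to 0$, but its equivariance defect is twice that of $P_{n-1}$, which does not tend to zero. (The appeal to Corollary \ref{lift-from-D} here is also misplaced: that result lifts maps into sequence algebras; what you actually need --- and what is true --- is simply that conjugating the exactly equivariant $\phi_1$ by a unitary in $\CU(A^\alpha_{\delta,K})$ gives a $(2\delta,K)$-approximately equivariant map.)

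The paper's proof uses the same ingredients but avoids long products entirely, and this is the missing idea. At each stage it takes a \emph{fresh} path $v_n$ from \ref{almost-asue1}/\ref{almost-asue2}, so that $v_n(1)$ conjugates $\phi_1$ onto $\phi_2$ up to $\eps_n$ on $F_n$ and the whole path has defect at most $\eps_n$ on $K_n$; it then applies \ref{dw-lemma} not to the homotopy $\eins\to w_n$ of a correction, but to the two-term bridge $u_{n+1}=v_{n+1}v_n^*$ (defect at most $\eps_n+\eps_{n+1}$), measured against $\psi_n=\ad(v_n(1))\circ\phi_1$, which is $(2\eps_n,K_n)$-approximately equivariant precisely because $v_n(1)$ is fresh. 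The final path is defined on $[n,n+1]$ by $w(t)=w_{n+1}(t-n)v_n(1)$: every value of it is a product of at most \emph{two} unitaries whose individual defects vanish as $n\to\infty$, so \eqref{sasGue:e3} holds, while \eqref{sasGue:e2} follows from the approximate commutation of $w_{n+1}(\cdot)$ with $\psi_n(F_{n-1})$ and \eqref{sasGue:e4} from the uniform bounds $\eps_n+\eps_{n-1}\le\eps_0$. To repair your argument you would have to replace the cumulative products by this ``fresh approximant plus consecutive bridge'' scheme; with that change the rest of your outline goes through.
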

\begin{proof}
Let $\eps_0>0$ and $K_0\subset G$ a compact set.
Let $K_0\subset K_n \subset K_{n+1}^\circ\subset G$ be an increasing sequence of compact sets with $G=\bigcup_{n\in\IN} K_n$. Choose a decreasing sequence $\eps_0>\eps_1>\eps_2>\dots$ converging to zero. Let $F_n\fin\CD$ be an increasing sequence of finite sets with dense union. For every $n\geq 1$ and every triple $(\eps_n, F_n, K_n)$, choose a pair $(\delta_n, F_n')$ satisfying the assumptions in \ref{dw-lemma}, with $(\eps_n,F_n,K_n)$ in place of $(\eps, F_1, K)$ and $(\delta_n, F_n')$ in place of $(\delta, F_2)$. By inductively making the $\eps_n$ smaller and the $F_n$ bigger, if necessary, we may assume $F_{n+1}=F_n'$ and $\eps_{n+1}\leq\delta_n/2$ for all $n$.

By applying either \ref{almost-asue1} or \ref{almost-asue2}, we can choose a sequence of unitary paths $v_n: [0,1]\to\CU(A)$ satisfying
\begin{equation} \label{asue:e1}
v_n(0)=\eins_A,\quad \max_{0\leq t\leq 1}~\max_{g\in K_n}~\|\alpha_g(v_n(t))-v_n(t)\|\leq\eps_n
\end{equation}
and
\begin{equation} \label{asue:e2}
\quad v_n(1)\phi_1(x)v_n(1)^*=_{\eps_n} \phi_2(x)
\end{equation}
for all $x\in F_n$.

Now observe that for all $n\geq 2$, the map $\psi_n=\ad(v_n(1))\circ\phi_1: \CD\to A$ is an $(2\eps_n, K_n)$-approximately equivariant $*$-homomorphism, and we have a unitary path $u_{n+1}=v_{n+1}v_n^*: [0,1]\to\CU(A)$ satisfying $u_{n+1}(0)=\eins_A$,
\[
\begin{array}{ccl}
\|\alpha_g(u_{n+1}(t))-u_{n+1}(t)\| &\leq& \|\alpha_g(v_n(t))-v_n(t)\|+\|\alpha_g(v_{n+1}(t))-v_{n+1}(t)\| \\
&\stackrel{\eqref{asue:e1}}{\leq}& \eps_n+\eps_{n+1}
\end{array}
\]
for all $g\in K_n$ and $0\leq t\leq 1$, and moreover
\[
\begin{array}{ccl}
\|[u_{n+1}(1),\psi_n(x)]\| &=& \|[v_{n+1}(1)v_n(1)^*,v_n(1)\phi_1(x)v_n(1)^*]\|\\
&=&\|(\ad(v_{n+1}(1))\circ\phi_1)(x)-(\ad(v_{n}(1))\circ\phi_1)(x)\| \\
&\stackrel{\eqref{asue:e2}}{\leq}& \eps_n+\eps_{n+1}
\end{array}
\]
for all $x\in F_n$. Since we have chosen the sequence $\eps_n$ so that $\eps_n+\eps_{n+1}\leq 2\eps_n\leq \delta_{n-1}$, we can apply \ref{dw-lemma} and obtain a unitary path $w_{n+1}: [0,1]\to\CU(A)$ satisfying
\begin{equation} \label{asue:e3}
w_{n+1}(0)=\eins_A,\quad w_{n+1}(1)= u_{n+1}(1)=v_{n+1}(1)v_n(1)^*;
\end{equation}
\begin{equation} \label{asue:e4} 
\max_{0\leq t\leq 1}~\max_{g\in K_n}~\|\alpha_g(w_{n+1}(t))-w_{n+1}(t)\|\leq\eps_{n-1};
\end{equation}
\begin{equation} \label{asue:e5}
\max_{0\leq t\leq 1}~\|[w_{n+1}(t), \psi_n(x)]\|\leq\eps_{n-1}\quad\text{for all}~x\in F_{n-1}.
\end{equation}
Let us now consider the unitary path $w: [1,\infty)\to\CU(A)$ given by
\[
 w(t)=\begin{cases} v_2(t-1) &,\quad\text{if}~ 1\leq t\leq 2 \\
w_{n+1}(t-n)v_n(1) &,\quad\text{if}~n\leq t\leq n+1~\text{for some}~n\geq 2.
\end{cases}
\]
It follows from \eqref{asue:e3} that this map is indeed well-defined and continuous. We are going to show that this path satisfies the desired properties from \ref{sasGue}. As $w(1)=\eins_A$ is clear, \eqref{sasGue:e1} follows.
 Moreover, we have for every $x\in\bigcup_{n\in\IN} F_n$ that
\[
\begin{array}{cl}
\multicolumn{2}{l}{ \dst\limsup_{t\to\infty}~ \|\phi_2(x)-(\ad(w(t))\circ\phi_1)(x)\| } \\
\leq& \dst\limsup_{n\to\infty}~\Big( \|\phi_2(x)-(\ad(v_n(1))\circ\phi_1)(x)\| \\
& \dst +\max_{0\leq t\leq 1}~ \|(\ad(v_n(1))\circ\phi_1)(x)-(\ad(w_{n+1}(t)v_n(1))\circ\phi_1)(x)\| \Big) \\
\stackrel{\eqref{asue:e2}}{\leq} & \dst \limsup_{n\to\infty}~ \eps_n + \max_{0\leq t\leq 1} \|[w_{n+1}(t),\psi_n(x)] \| \\
\stackrel{\eqref{asue:e5}}{\leq} & \dst \limsup_{n\to\infty}~ \eps_n+\eps_{n-1} ~=~ 0.
\end{array}
\]
Since the union $\bigcup_{n\in\IN} F_n$ is a dense subset of $\CD$, it follows that indeed $\phi_2=\lim_{t\to\infty} \ad(w(t))\circ\phi_1$ in point-norm. This verifies \eqref{sasGue:e2}. Furthermore, we observe that every compact subset $K$ is eventually contained in one of the sets $K_n$ for sufficiently large $n$, and thus
\[
\begin{array}{cl}
\multicolumn{2}{l}{ \dst\limsup_{t\to\infty}~ \max_{g\in K}~ \|\alpha_g(w(t))-w(t)\| }\\
\leq& \dst\limsup_{n\to\infty}~\max_{0\leq t\leq 1}~\max_{g\in K}~ \|w_{n+1}(t)v_n(1)-\alpha_g(w_{n+1}(t)v_n(1))\| \\
\stackrel{\eqref{asue:e1}, \eqref{asue:e4}}{\leq} & \dst\limsup_{n\to\infty}~\eps_n+\eps_{n-1} ~=~ 0.
\end{array}
\]
This verfies \eqref{sasGue:e3}. Lastly, keeping in mind $K_0\subset K_n$ for all $n$, we have
\[
\begin{array}{cl}
\multicolumn{2}{l}{ \dst\sup_{t\geq 1}~ \max_{g\in K_0}~ \|\alpha_g(w(t))-w(t)\| }\\
=& \dst \max_{0\leq t\leq 1}~ \max_{g\in K_0}~\max\Big( \|\alpha_g(v_2(t))-v_2(t)\| ~, \\
& \dst\sup_{n\geq 2}~ \|w_{n+1}(t)v_n(1)-\alpha_g(w_{n+1}(t)v_n(1))\| \Big) \\
\stackrel{\eqref{asue:e1}, \eqref{asue:e4}}{\leq} & \dst\sup_{n\geq 2}~\eps_n+\eps_{n+1} ~\leq~ \eps_0.
\end{array}
\]
This verfies \eqref{sasGue:e4}. Since $K_0\subset G$ and $\eps_0>0$ were arbitrary parameters, the claim follows.
\end{proof}

\begin{rem}
Let $G$ be a second-countable, compact group and $\gamma: G\curvearrowright\CD$ a strongly self-absorbing, unitarily regular action on a separable, unital \cstar-algebra. Let $\alpha: G\curvearrowright A$ be an action on a unital \cstar-algebra. Let $X=\Hom_G(\CD,A\otimes\CD)$ be the compact space of unital, $G$-equivariant $*$-homomorphisms from $(\CD,\gamma)$ to $(A\otimes\CD,\alpha\otimes\gamma)$, equipped with the point-norm topology. Consider the unital and equivariant $*$-homomorphism $\phi: (\CD,\gamma)\to\big( \CC(X)\otimes A\otimes\CD), \id\otimes\alpha\otimes\gamma \big)$ given by $\phi(d)(\kappa)=\kappa(d)$. Let $\kappa_0:=\eins_A\otimes\id_\CD\in X$.
Then \ref{asue} implies that $\phi$ is strongly asymptotically $G$-unitarily equivalent to $\eins\otimes\kappa_0$. Since $G$ is compact, the path of unitaries may be chosen to have image in the fixed-point algebra $(\CC(X)\otimes A\otimes\CD)^{\id\otimes\alpha\otimes\gamma}$. In particular, $\phi$ and $\eins\otimes\kappa_0$ are homotopic as $G$-equivariant $*$-homomorphisms, which shows that $X$ contracts to the point $\set{\kappa_0}$. This can be used to make statements about the $KK^G$-theory for strongly self-absorbing \cstar-dynamical systems, similar to \cite[Section 3]{DadarlatWinter09}. 
It is unclear whether one can make such statements about the $KK^G$-theory for non-compact $G$. In the key step above, the application of \ref{asue} would then only give a homotopy that goes through $(\eps,K)$-approximately equivariant $*$-homomorphisms for arbitrarily small $\eps>0$ and large compact sets $K\subset G$. 
\end{rem}


\section{Equivariant $\sigma$-ideals}

\noindent
In this section, we will generalize Kirchberg's notion of a $\sigma$-ideal  \cite[1.5]{Kirchberg04} to the equivariant context. Note that for $\IZ$-actions, this has been already introduced by Liao in \cite[5.5]{Liao15} as a technical tool, and the notion of $G$-$\sigma$-ideals presented in this section coincides with his definition for $G=\IZ$.

\begin{defi}
Let $G$ be a second-countable, locally compact group. Let $A$ be a \cstar-algebra with an action $\alpha: G\curvearrowright A$. An $\alpha$-invariant ideal $J\subset A$ is called a $G$-$\sigma$-ideal, if for every separable, $\alpha$-invariant \cstar-subalgebra $C\subset A$, there exists a positive contraction $e\in (J\cap C')^\alpha$ such that $ec=c$ for all $c\in J\cap C$.
\end{defi}

As is to be expected, the typical examples for $G$-$\sigma$-ideals arise in the context of \cstar-dynamical systems induced on (central) sequence algebras. In this section, we consider some classes of examples in such a context.

\begin{lemma}[see {\cite[1.4]{Kasparov88}}]
\label{fixed-approx-unit}
Let $A$ be a $\sigma$-unital \cstar-algebra, $G$ a $\sigma$-compact, locally compact group and $\alpha: G\curvearrowright$ an action. Let $J\subset A$ be a separable, $\alpha$-invariant ideal. Then there exists an approximate unit $(e_n)_{n \in \IN}$ for $J$ such that $\max_{g\in K}~\|\alpha_g(e_n)-e_n\|\to 0$ for every compact subset $K\subset G$, and such that $(e_n)_n$ is quasicentral relative to $A$.
\end{lemma}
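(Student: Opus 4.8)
\emph{Plan of proof.} The three requirements on the approximate unit $(e_n)_n$ are (i) it is an approximate unit for $J$, (ii) it is asymptotically $G$-invariant on compacta, and (iii) it is quasicentral relative to $A$. The plan is to produce, for each compact $K\subset G$, each $\eps>0$, and each pair of finite sets $F\fin A$, $F'\fin J$, a single positive contraction $e\in J$ that is $\eps$-close to acting as a unit on $F'$, $\eps$-commutes with $F$, and satisfies $\max_{g\in K}\|\alpha_g(e)-e\|\le\eps$; the desired sequence is then obtained by a diagonal argument using $\sigma$-compactness of $G$ and separability of $A$ and $J$. The properties (i) and (iii) are easy to arrange and are preserved by convex combinations, so the real content is enforcing the invariance (ii) simultaneously. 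I would begin with a quasicentral approximate unit $(u_n)_n$ for $J$ relative to $A$ (which exists as a sequence by Arveson's theorem, using that $A$ is $\sigma$-unital and $J$ is separable), and then \emph{smooth it once} by convolution with a single fixed function: fix $\psi\in C_c(G)$ with $\psi\geq 0$ and $\int_G\psi\,dg=1$, and set $v_n=\int_G\psi(g)\,\alpha_g(u_n)\,dg\in J$. These $v_n$ are again positive contractions, they still form a quasicentral approximate unit (since, for fixed $b\in J$ and $a\in A$, the sets $\{\alpha_{g^{-1}}(b):g\in\operatorname{supp}\psi\}$ and $\{\alpha_{g^{-1}}(a):g\in\operatorname{supp}\psi\}$ are norm-compact, so $\sup_{g\in\operatorname{supp}\psi}\|\alpha_g(u_n)b-b\|\to 0$ and $\sup_{g\in\operatorname{supp}\psi}\|[\,\alpha_g(u_n),a]\|\to 0$), but now they enjoy a crucial extra feature: using left-invariance of Haar measure one gets $\|\alpha_h(v_n)-v_n\|\leq\|L_h\psi-\psi\|_{1}$ for all $h$, a bound that is \emph{uniform in $n$} and tends to $0$ as $h\to e$. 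In particular $(v_n)_n$ represents an element of $A_{\infty,\alpha}$, and this uniform modulus of continuity is preserved under arbitrary convex combinations.

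The convexity engine comes next. Because $J$ is $\alpha$-invariant, $(\alpha_g(v_n))_n$ is again an approximate unit for $J$, so both $(v_n)_n$ and $(\alpha_g(v_n))_n$ converge strictly to $\eins\in\CM(J)$; hence $\alpha_g(v_n)-v_n\to 0$ strictly for each fixed $g$, and, via Cohen factorization of functionals on $J$, also weakly in $J$. Given $K,\eps,F,F'$, I would first pick a finite $\eta$-net $g_1,\dots,g_r\in K$, where $\eta$ is small enough that $\|\alpha_g(x)-\alpha_{g'}(x)\|\leq\eps/2$ for all $g,g'\in K$ with $g'^{-1}g$ in the corresponding neighbourhood of $e$ and all $x$ in the convex hull of $\{v_n\}$ (possible by the \emph{uniform} modulus from the smoothing step, which is exactly the equicontinuity guaranteed by Lemma \ref{sequence-lift}). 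The sequence $n\mapsto\big(\alpha_{g_1}(v_n)-v_n,\dots,\alpha_{g_r}(v_n)-v_n\big)$ is weakly null in the finite direct sum $J^{\oplus r}$, so by Mazur's lemma $0$ lies in the norm closure of the convex hull of its tail $\{n\geq N\}$; taking $N$ large enough that each $v_n$ ($n\geq N$) already meets the requirements (i),(iii) for $(F,F')$ to within $\eps$, I obtain a convex combination $e=\sum_i t_i v_{n_i}$ with $\max_j\|\alpha_{g_j}(e)-e\|\leq\eps/2$. The net estimate then upgrades to $\max_{g\in K}\|\alpha_g(e)-e\|\leq\eps$ by the choice of $\eta$, while (i) and (iii) persist under the convex combination.

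Finally, I would assemble the sequence: fix an exhaustion $K_1\subset K_2\subset\cdots$ of $G$ by compact sets, dense sequences in $A$ and in $J$, and $\eps_n\searrow 0$, and let $e_n$ be an element produced by the previous paragraph for the data $(K_n,\eps_n,F_n,F_n')$ built from the first $n$ members of these dense sequences. Then $(e_n)_n$ is a sequence of positive contractions that is asymptotically $G$-invariant on every compact set (each $K$ eventually lies in some $K_n$), and an approximate unit for $J$ and quasicentral relative to $A$ by density together with an $\eps/3$-argument. The main obstacle is precisely the passage in the middle paragraph from invariance over a \emph{finite} net of group elements to invariance over the whole compact set $K$ without any amenability hypothesis on $G$: naive convolution with shrinking supports yields invariance only near the identity, so the decisive trick is the fixed-$\psi$ smoothing, which makes the family $\{g\mapsto\alpha_g(v_n)\}_n$ equicontinuous uniformly in $n$ (hence uniformly over all convex combinations) and thereby reconciles the weak-to-norm Mazur argument with uniform control on $K$.
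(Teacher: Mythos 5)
The paper itself contains no proof of this lemma at all \,---\, it is quoted directly from Kasparov \cite[1.4]{Kasparov88} \,---\, so your argument can only be measured against that source and against the statement itself. The machinery you set up is the right one and is essentially the standard proof of such statements: the one-time convolution with a fixed $\psi\in C_c(G)$, which gives $\|\alpha_h(v_n)-v_n\|\leq\|L_h\psi-\psi\|_1$ uniformly in $n$ and hence an equicontinuity that survives convex combinations; the passage from strict convergence of $\alpha_g(v_n)-v_n$ to weak nullity; Mazur's lemma applied in $J^{\oplus r}$ over a finite net in $K$; the upgrade from the net to all of $K$ via the uniform modulus; and the closing diagonalization. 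Each of these steps is correct as you present it, and your identification of the fixed-$\psi$ smoothing as the device that reconciles the weak-to-norm convexity argument with uniform control on $K$, without any amenability hypothesis on $G$, is exactly the right point.

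There is, however, a genuine gap, located precisely at the two places where you quantify over all of $A$: the opening appeal to Arveson's theorem for a \emph{sequential} quasicentral approximate unit ``using that $A$ is $\sigma$-unital and $J$ is separable'', and the closing diagonalization along ``dense sequences in $A$''. Both require $A$ to be separable, not merely $\sigma$-unital: Arveson's theorem produces a \emph{net} of convex combinations, and a single sequence quasicentral relative to all of $A$ can be extracted only when quasicentrality has to be tested against countably many elements. For merely $\sigma$-unital $A$ this cannot be repaired, because the statement itself fails in that generality. Take $J=\CK(H)$, the compacts on a separable Hilbert space, inside $A=B(H)$ (unital, hence $\sigma$-unital), with trivial $G$. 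Any sequential approximate unit $(e_n)_n$ of $\CK(H)$ satisfies $e_n\to\eins$ strongly while each $e_n$ is compact, so one may recursively choose mutually orthogonal unit vectors $\zeta_j,\eta_j$ and indices $n_j$: first $\zeta_j$ orthogonal to all previous choices, then $n_j$ with $\|e_{n_j}\zeta_j-\zeta_j\|<1/j$ by strong convergence, then $\eta_j$ orthogonal to all previous choices with $\|e_{n_j}\eta_j\|<1/j$ by compactness of $e_{n_j}$. The partial isometry $T$ with $T\eta_j=\zeta_j$ and $T=0$ on $(\operatorname{span}\{\eta_j\})^{\perp}$ then satisfies $\|[e_{n_j},T]\|\geq\|e_{n_j}\zeta_j\|-\|Te_{n_j}\eta_j\|\geq 1-2/j$, so $(e_n)_n$ is not quasicentral relative to $B(H)$. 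Consequently no argument can close this particular gap; the lemma is true \,---\, and your proof establishes it essentially verbatim \,---\, when $A$ is separable, or when quasicentrality is demanded only relative to a fixed separable subset of $A$. That weaker form is also the only way the lemma is invoked in this paper (in \ref{ideal-sequences}, \ref{F(A)-alternativ} and \ref{ce-lemma} the ambient algebra is separable), so your argument does cover everything that is actually needed, but as a proof of the statement as written it inherits the statement's own overreach.
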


\begin{example} \label{ideal-sequences}
Let $G$ be a second-countable, locally compact group. Let $A$ be a \cstar-algebra with an action $\alpha: G\curvearrowright A$. Let $J\subset A$ be an $\alpha$-invariant ideal. Then $J_{\infty,\alpha}\subset A_{\infty,\alpha}$ is a $G$-$\sigma$-ideal.
\end{example}
\begin{proof}
Let $C\subset A_{\infty,\alpha}$ be a separable, $\alpha_\infty$-invariant \cstar-subalgebra. We choose a dense sequence $c_n\in C$ and a dense sequence in $b_n\in C\cap J_{\infty,\alpha}$. Let each $c_n$ and $b_n$ be represented by a bounded sequence $c^{(n)}_k\in A$ and $b^{(n)}_k\in A$, respectively. Applying \ref{fixed-approx-unit}, we may choose a countable approximate unit $h_n\in C\cap J_{\infty,\alpha}$ with $\max_{g\in K} \|\alpha_{\infty,g}(h_{n})-h_{n}\|\to 0$ for every compact set $K\subset G$, and which is quasicentral relative to $C$. Let each $h_n$ be represented by a bounded sequence $h^{(n)}_k\in J$. Choose a sequence $K_n\nearrow G$. For every $j\in\IN$, there exists $h_{n_j}$ such that
\[
\max_{1\leq n\leq j}~ \Big( \|[c_n,h_{n_j}]\|+\|b_n-b_nh_{n_j}\| \Big) \leq 1/j
\]
and
\[
\max_{g\in K_j}~ \|\alpha_{\infty,g}(h_{n_j})-h_{n_j}\| \leq 1/j.
\]
Using \cite[3.5]{BarlakSzabo15}, we can find a strictly increasing sequence $k_j\in\IN$ such that
\[
\max_{1\leq n\leq j}~\sup_{k\geq k_j} \Big( \|[c^{(n)},h^{(n_j)}_k]\|+\|b^{(n)}_k-b^{(n)}_kh^{(n_j)}_k\| \Big) \leq 2/j 
\]
and
\[
\sup_{k\geq k_j}~\max_{g\in K_j}~ \|\alpha_{g}(h^{(n_j)}_k)-h^{(n_j)}_k\| \leq 2/j
\]
for all $j\in\IN$. Let $e\in J_{\infty,\alpha}$ be the positive contraction defined by the sequence 
\[
e_k = \begin{cases} 0 &,\quad k<k_1 \\ h^{(n_j)}_k &,\quad k_j\leq k<k_{j+1}. \end{cases}
\]
It follows that
\[
\|[e,c_n]\| = \limsup_{k\to\infty}~ \|[e_k, c^{(n)}_k]\| = \inf_{j\in\IN}~\sup_{k\geq k_j}~ \|[h^{(n_j)}_k,c^{(n)}_k]\| \leq \inf_{j\in\IN}~ 2/j = 0
\]
and
\[
\|b_n-b_ne\| = \limsup_{k\to\infty}~\|b^{(n)}_k-b^{(n)}_ke_k\| = \inf_{j\in\IN}~\sup_{k\geq k_j}~\|b^{(n)}_k-b^{(n)}_kh^{(n_j)}_k\| =  0
\]
for all $n\in\IN$. Moreover, we have
\[
\|\alpha_{\infty,g}(e)-e\| = \limsup_{k\to\infty}~\|\alpha_g(e_k)-e_k\| = \inf_{j\in\IN}~\sup_{k\geq k_j}~\|\alpha_g(h^{(n_j)}_k)-h^{(n_j)}_k\| = 0
\]
for all $g\in G$.
By the choice of $b_n$ and $c_n$, we get that $e\in (J_{\infty,\alpha}\cap C')^{\alpha_\infty}$ with $eb=b$ for all $b\in J_{\infty,\alpha}\cap C$. This shows our claim.
\end{proof}

\begin{example} \label{ann-sigma}
Let $G$ be a second-countable, locally compact group. Let $A$ be a \cstar-algebra with an action $\alpha: G\curvearrowright A$. Let $C\subset A_{\infty,\alpha}$ be a separable, $\alpha_\infty$-invariant \cstar-subalgebra. Then $\ann(C,A_{\infty,\alpha})$ is a $G$-$\sigma$-ideal in $A_{\infty,\alpha}\cap C'$.
\end{example}
\begin{proof}
This is analogous to the proof of \ref{ideal-sequences}, so we omit the details.
\end{proof}

The following is the main technical reason for the usefulness of $G$-$\sigma$-ideals:

\begin{prop} \label{strong lift}
Let $G$ be a second-countable, locally compact group. Let $A$ be a \cstar-algebra with an action $\alpha: G\curvearrowright A$. Let $J\subset A$ be a $G$-$\sigma$-ideal. Let $B=A/J$ and $\beta: G\curvearrowright B$ the action induced by $\alpha$. Denote by $\pi: A\to B$ the (equivariant) quotient map. Then:
\begin{enumerate}[label=\textup{(\roman*)},leftmargin=*]
\item For every $\alpha$-invariant, separable \cstar-subalgebra $D\subset A$, the induced map $\pi: A\cap D'\to B\cap\pi(D)'$ is surjective and its kernel $J\cap D'$ is a $G$-$\sigma$-ideal in $A\cap D'$. \label{strong-lift:1}
\item For every $\beta$-invariant, separable \cstar-subalgebra $C\subset B$, there exists an equivariant c.p.c.\ order zero map $\psi: (C,\beta)\to (A,\alpha)$ such that $\pi\circ\psi=\id_C$. \label{strong-lift:2}
\end{enumerate}
\end{prop}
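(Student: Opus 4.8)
For part (i), the plan is to exploit the defining property of a $G$-$\sigma$-ideal to manufacture a central, fixed ``local unit'' for $J$ and use it to compress. For surjectivity, given $b\in B\cap\pi(D)'$, pick any preimage $a_0\in A$; then $[a_0,d]\in J$ for all $d\in D$ because $\pi([a_0,d])=[b,\pi(d)]=0$. Choosing a separable, $\alpha$-invariant \cstar-subalgebra $C_0\subset A$ with $D\cup\{a_0\}\subset C_0$ (possible since $D$ is separable and $G$ is second-countable), the $G$-$\sigma$-ideal property yields a positive contraction $e\in(J\cap C_0')^\alpha$ with $ex=x$ for all $x\in J\cap C_0$. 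I would then set $a=(1-e)a_0(1-e)$, computed in $\CM(A)$, which lies in $A$. Since $\pi(e)=0$ we get $\pi(a)=b$, and since $e$ commutes with $D\subset C_0$ while $(1-e)$ annihilates the commutators $[a_0,d]\in J\cap C_0$ from both sides, one checks $[a,d]=(1-e)[a_0,d](1-e)=0$. Thus $a\in A\cap D'$ lifts $b$. The kernel of $\pi|_{A\cap D'}$ is visibly $J\cap D'$, and it is $\alpha$-invariant; to see it is a $G$-$\sigma$-ideal in $A\cap D'$, given a separable $\alpha$-invariant $C\subset A\cap D'$ I would apply the $\sigma$-ideal property of $J$ to the separable algebra $C^*(C\cup D)$ (separable because $D$ is), obtaining a fixed contraction $e$ that commutes with both $C$ and $D$ and is a unit on $J\cap C^*(C\cup D)$; this $e$ lands in $\big((J\cap D')\cap C'\big)^\alpha$ and is a unit on $(J\cap D')\cap C$.

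For part (ii), the plan is to reduce to a separable situation and write down the order zero lift explicitly. First I would choose a separable, $\alpha$-invariant \cstar-subalgebra $\hat C\subset A$ with $\pi(\hat C)=C$, by lifting a dense sequence of $C$ to $A$ and saturating under $\alpha$ (using second countability of $G$, and $\beta$-invariance of $C$ to see $\pi(\hat C)=C$). Set $I=\hat C\cap J=\ker(\pi|_{\hat C})$ and apply the $G$-$\sigma$-ideal property to $\hat C$ to obtain a positive contraction $e\in(J\cap\hat C')^\alpha$ with $ex=x$ for all $x\in I$. I would then define $\psi\colon C\to A$ by $\psi(\pi(x))=(1-e)^{1/2}x(1-e)^{1/2}$ for $x\in\hat C$. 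The verification proceeds as follows: $\psi$ is well defined because if $x-x'\in I$ then $(1-e)^{1/2}(x-x')=0$ (functional calculus, since $e$ acts as a unit on $I$ gives $f(e)y=f(1)y$ for $y\in I$); it is c.p.c.\ as a compression by the contraction $(1-e)^{1/2}$, lifting positive matrices over $C$ to positive matrices over $\hat C$; it satisfies $\pi\circ\psi=\id_C$ since $\pi(1-e)=\eins$; and it is order zero, because $e$ commutes with $\hat C$ and $(1-e)^{1/2}$ annihilates any product of lifts that lands in $I$, so $\psi(c)\psi(c')=xx'(1-e)^2=0$ whenever $cc'=0$.

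The step I expect to be the crux is achieving \emph{exact} equivariance together with the order zero property, and here the payoff of the definition becomes visible. Naively averaging a non-equivariant order zero lift would destroy order zero, and a global c.p.c.\ (let alone $*$-homomorphic) lift of the inclusion $C\hookrightarrow B$ need not exist without nuclearity, so neither classical route is available. The resolution is that the $G$-$\sigma$-ideal definition provides a \emph{single} positive contraction $e$ that is simultaneously a local unit for the ideal, central relative to the lift $\hat C$, and genuinely $\alpha$-\emph{fixed}. Consequently $\alpha_g(\psi(\pi(x)))=(1-e)^{1/2}\alpha_g(x)(1-e)^{1/2}=\psi(\beta_g(\pi(x)))$ holds on the nose, using $\alpha_g(e)=e$ and the $\alpha$-invariance of $\hat C$; no averaging or telescoping along an exhausting sequence of compact sets is needed. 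Thus the exact fixedness built into the definition of a $G$-$\sigma$-ideal is precisely what upgrades the explicit order zero section to an equivariant one, and Lemma \ref{fixed-approx-unit} is not even required for this argument.
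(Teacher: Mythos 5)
Your proof is correct and follows essentially the same route as the paper: in (ii) you choose a separable $\alpha$-invariant lift $\hat C$ of $C$, invoke the $G$-$\sigma$-ideal property to get a fixed positive contraction $e\in(J\cap\hat C')^\alpha$, and define the section as compression by $(\eins-e)^{1/2}$, which coincides with the paper's map $d\mapsto(\eins-e)d$ since $e$ commutes with $\hat C$. The only difference is cosmetic: for the surjectivity in (i) the paper simply cites Kirchberg's non-equivariant result \cite[1.6]{Kirchberg04}, whereas you write out the standard compression argument $a=(\eins-e)a_0(\eins-e)$ directly, and the paper likewise dismisses the kernel statement as immediate from the definition, exactly as in your argument.
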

\begin{proof}
\ref{strong-lift:1}: The surjectivity follows already from the fact that $J$ is an ordinary $\sigma$-ideal and has nothing to do with equivariance, see \cite[1.6]{Kirchberg04}. The fact that $J\cap D'$ is a $G$-$\sigma$-ideal in $A\cap D'$ is trivial and follows directly from the definition of a $G$-$\sigma$-ideal and the separability of $D$.

\ref{strong-lift:2}: Let $D\subset A$ be a separable, $\alpha$-invariant \cstar-subalgebra with $\pi(D)=C$. Using the $G$-$\sigma$-ideal property, let $e\in (J\cap D')^\alpha$ be a positive contraction with $ed=d$ for all $d\in J\cap D$. Then we obtain an equivariant c.p.c.~order zero map $\kappa: (D,\alpha)\to (A,\alpha)$ via $\kappa(d)=(\eins-e)d$. By the choice of $e$, we have $\kappa(d)=0$ for all $d\in D\cap J$. Thus we get the desired equivariant c.p.c.\ order zero map by defining $\psi(c)=(\eins-e)d$, where $d\in D$ is any element with $\pi(d)=c$.
\end{proof}

\begin{rem}
Let $G$ be a locally compact group and let $\pi: (A,\alpha)\to (B,\beta)$ be an equivariant, surjective $*$-homomorphism between $G$-\cstar-dynamical systems. Following the example of Kirchberg from \cite[1.5]{Kirchberg04}, we will say that $\pi$ is strongly locally semi-split, if it satisfies condition \ref{strong lift}\ref{strong-lift:2}. In particular, the statement in \ref{strong lift} then tells us that an equivariant quotient map (and its restrictions to relative commutants of separable subsystems) is automatically strongly locally semi-split, if its kernel is a $G$-$\sigma$-ideal.
\end{rem}

In order to exploit this strong lifting result, we shall show some non-trivial facts about the continuous parts of (central) sequence algebras that makes the concept of $G$-$\sigma$-ideals applicable in these cases.

\begin{lemma} \label{F(A)-alternativ}
Let $G$ be a second-countable, locally compact group. Let $A$ be a \cstar-algebra with an action $\alpha: G\curvearrowright A$. Let $C$ be a separable, $\alpha_\infty$-invariant \cstar-subalgebra of $A_{\infty,\alpha}$. Then the canonical embedding
\[
A_{\infty,\alpha}\cap C'/\ann(C,A_{\infty,\alpha})\to F_{\alpha}(C,A_\infty)
\]
is an isomorphism. In other words, any $\tilde{\alpha}_\infty$-continuous element in $F(C,A_\infty)$ can be represented by an $\alpha_\infty$-continuous element in $A_\infty\cap C'$.
\end{lemma}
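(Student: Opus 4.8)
The plan is to prove injectivity by a direct computation and surjectivity by an averaging argument; throughout write $\pi\colon A_\infty\cap C'\to F(C,A_\infty)$ for the quotient map and let $y$ be a $\tilde\alpha_\infty$-continuous element of $F(C,A_\infty)$. Injectivity is immediate: if $x\in A_{\infty,\alpha}\cap C'$ maps into $\ann(C,A_\infty)$ then $xc=cx=0$ for all $c\in C$, while $x$ is already $\alpha_\infty$-continuous, so $x\in\ann(C,A_{\infty,\alpha})$. Thus the canonical map has kernel $\ann(C,A_{\infty,\alpha})$ and image exactly $\pi(A_{\infty,\alpha}\cap C')$, and the whole content is to show $\pi(A_{\infty,\alpha}\cap C')=F_\alpha(C,A_\infty)$. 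Here I would first reduce to a density statement: since $\pi$ is a $*$-homomorphism, its restriction to the $\mathrm{C}^*$-algebra $A_{\infty,\alpha}\cap C'$ has \emph{closed} image, and that image lies in $F_\alpha(C,A_\infty)$; hence it suffices to approximate every $\tilde\alpha_\infty$-continuous $y$ arbitrarily well in $F$-norm by elements of $\pi(A_{\infty,\alpha}\cap C')$.

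The approximations arise from averaging a lift. Fix a (possibly discontinuous) lift $z=(z_n)_n\in A_\infty\cap C'$ of $y$ and, for $f\in C_c(G)$ with $f\ge 0$ and $\int_G f=1$, set $z^{(f)}:=\big[\big(\int_G f(g)\alpha_g(z_n)\,dg\big)_n\big]$, the inner integrals being $A$-valued Bochner integrals (legitimate since $g\mapsto\alpha_g(z_n)$ is continuous for each fixed $n$). Three properties need checking. First, $z^{(f)}\in A_{\infty,\alpha}$: from $\big\|\alpha_h\big(\int_G f(g)\alpha_g(z_n)\,dg\big)-\int_G f(g)\alpha_g(z_n)\,dg\big\|\le\|L_hf-f\|_1\,\|z\|$ (with $L_h$ denoting left translation) one gets continuity of the orbit \emph{uniformly in $n$}, so the averaged sequence is $\alpha_\infty$-continuous. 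Second, $z^{(f)}\in C'$: writing $[\alpha_g(z_n),c_n]=\alpha_g\big([z_n,\alpha_{g^{-1}}(c_n)]\big)$ for $c=(c_n)_n\in C$, using that $C$ is $\alpha_\infty$-invariant so $\alpha_{\infty,g^{-1}}(c)\in C$, and invoking the equicontinuity of representing sequences of $C$ from \ref{sequence-lift} together with a finite cover of $\operatorname{supp}f$, one finds $\limsup_n\sup_{g\in\operatorname{supp}f}\|[\alpha_g(z_n),c_n]\|=0$. The very same two ingredients show, third, that averaging preserves the annihilator, so that $z^{(f)}$ depends, modulo $\ann(C,A_\infty)$, only on $y$.

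It then remains to identify the image, and I claim that $\pi(z^{(f)})=\int_G f(g)\,\tilde\alpha_{\infty,g}(y)\,dg$, the right-hand side being a genuine Bochner integral in $F$ because $y$ is $\tilde\alpha_\infty$-continuous. Granting this, $\|\pi(z^{(f)})-y\|\le\sup_{g\in\operatorname{supp}f}\|\tilde\alpha_{\infty,g}(y)-y\|$, which tends to $0$ as $\operatorname{supp}f$ shrinks to $\{e\}$; together with the density reduction above this completes the proof.

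I expect this last identity to be the main obstacle. The difficulty is precisely that the orbit $g\mapsto\alpha_{\infty,g}(z)$ need not be norm-continuous in $A_\infty$, so one cannot naively pull $\pi$ through the integral, and approximating the coordinatewise integral $z^{(f)}$ by Riemann sums in $A_\infty$-norm fails for lack of a modulus of continuity uniform in $n$. The route I would take is to approximate $\int_G f(g)\tilde\alpha_{\infty,g}(y)\,dg$ by Riemann sums $\sum_i\lambda_i\tilde\alpha_{\infty,g_i}(y)=\pi\big(\sum_i\lambda_i\alpha_{\infty,g_i}(z)\big)$ (valid in $F$ by continuity of the orbit of $y$) and to control the $F$-norm of $z^{(f)}-\sum_i\lambda_i\alpha_{\infty,g_i}(z)$ using the annihilator-preservation property and the fact that, for $g$ in a small piece $E_i\ni g_i$, the fibre difference $\alpha_{\infty,g}(z)-\alpha_{\infty,g_i}(z)$ is small in $F$-norm. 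Passing from these fibrewise $F$-norm bounds to a single global bound on the averaged element is the delicate step, and it is here that the separability of $C$ and the equicontinuity furnished by \ref{sequence-lift} have to be exploited with care.
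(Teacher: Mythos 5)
Your preparatory steps are all sound: injectivity is indeed immediate, the image of $A_{\infty,\alpha}\cap C'$ under $\pi$ is closed (image of a $*$-homomorphism), so density in $F_\alpha(C,A_\infty)$ suffices, and your three verifications for $z^{(f)}$ (membership in $A_{\infty,\alpha}$ via $\|L_hf-f\|_1$, membership in $C'$ and annihilator-preservation via \ref{sequence-lift} plus a finite net over $\operatorname{supp} f$) all go through. But there is a genuine gap exactly where you flag it: you never prove the identity $\pi(z^{(f)})=\int_G f(g)\,\tilde{\alpha}_{\infty,g}(y)\,dg$, nor the weaker estimate $\|\pi(z^{(f)})-y\|\le\sup_{g\in\operatorname{supp}f}\|\tilde{\alpha}_{\infty,g}(y)-y\|$ which is all you actually need, and the Riemann-sum route you sketch does not close it. The obstruction is structural: $z^{(f)}-\sum_i\lambda_i\alpha_{\infty,g_i}(z)$ is only a \emph{coordinatewise} average of elements that are small in the quotient norm; since $g\mapsto\alpha_{\infty,g}(z)$ is not norm-continuous in $A_\infty$, this average is not a Bochner integral in $A_\infty$, and smallness of the integrands in the quotient norm does not pass to a coordinatewise average by any general principle. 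As written, the proof stops at its crux.

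The missing tool is a formula computing the quotient norm by multiplication against $C$: for $w\in A_\infty\cap C'$ with $C$ separable, one has $\|w+\ann(C,A_\infty)\|=\sup\{\|wc\|: c\in C,\ \|c\|\le 1\}$. (The inequality $\geq$ is trivial; for $\leq$ one runs a diagonal argument over a countable approximate unit $(h_k)$ of $C$ to produce a positive contraction $e\in A_\infty$ with $ec=c$ for all $c\in C$ and $\|we\|\le\sup_k\|wh_k\|$, so that $w-we\in\ann(C,A_\infty)$.) Granting this, your proof closes without any Riemann sums: for a contraction $c\in C$ represented by $(c_n)_n$, the reverse Fatou lemma (the integrands are dominated by the integrable function $2\|z\|f$) gives
\[
\|(z^{(f)}-z)c\| \;\le\; \limsup_{n\to\infty}\int_G f(g)\,\|(\alpha_g(z_n)-z_n)c_n\|\,dg \;\le\; \int_G f(g)\,\|(\alpha_{\infty,g}(z)-z)c\|\,dg \;\le\; \sup_{g\in\operatorname{supp}f}\|\tilde{\alpha}_{\infty,g}(y)-y\|,
\]
where the last step uses $\|wc\|\le\|w+\ann(C,A_\infty)\|\,\|c\|$ applied to $w=\alpha_{\infty,g}(z)-z$; taking the supremum over $c$ then yields $\|\pi(z^{(f)})-y\|\le\sup_{g\in\operatorname{supp}f}\|\tilde{\alpha}_{\infty,g}(y)-y\|\to 0$. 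Note that the diagonal argument hidden in this norm formula is essentially the same reindexation that the paper's proof uses directly: there one multiplies a lift of $y$ by an approximately $G$-invariant approximate unit of $C$ (obtained from \ref{fixed-approx-unit}), observes that such products are automatically $\alpha_\infty$-continuous because multiplication by elements of $C$ converts continuity in the quotient norm into honest norm continuity, and then diagonalizes over a dense sequence of $C$ — no group averaging at all. So your route is viable and genuinely different from the paper's, but in its present form it is incomplete, and completing it requires precisely the separability/diagonalization input that you only allude to in your final paragraph.
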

\begin{proof}
As $C$ is separable, we choose a dense sequence $c_n\in C$. Let each $c_n$ be represented by a bounded sequence $c^{(n)}_k\in A$. Applying \ref{fixed-approx-unit}, we may choose a countable approximate unit $h_n\in C$ with $\max_{g\in K} \|\alpha_{\infty,g}(h_{n})-h_{n}\|\to 0$ for every compact set $K\subset G$. Let each $h_n$ be represented by a bounded sequence $h^{(n)}_k\in A$.

Let $x\in F_\alpha(C,A_\infty)$ be an contraction and represent it by a bounded sequence $x_n\in A$. Let $K_0\subset G$ be a compact neighbourhood of the unit and consider a sequence $\eps_j\searrow 0$. Then $f: K_0\to [0,\infty),~ g\mapsto\|\tilde{\alpha}_g(x)-x\|$ is continuous. We have for all $n$ that $x\cdot h_{n} \in A_{\infty,\alpha}$ is well-defined, and in fact
\[
\|xh_{n}-\alpha_{\infty,g}(xh_n)\|\leq \|\tilde{\alpha}_{\infty,g}(x)-x\|+\|\alpha_{\infty,g}(h_n)-h_n\|
\] 
for all $g\in G$.
For every $j$, there is thus a large $n_j$ with
\[
\limsup_{k\to\infty}~\|x_kh^{(n)}_k-\alpha_g(x_kh^{(n)}_k)\| \leq f(g)+\eps_k
\]
for all $g\in K_0$ and $n\geq n_j$. Applying \ref{sequence-lift} and uniform continuity on compact sets, we see that therefore
\[
\limsup_{k\to\infty}~\max_{g\in K_0}~\|x_kh^{(n)}_k-\alpha_g(x_kh^{(n)}_k)\| \leq f(g)+\eps_j
\]
for all $n\geq n_j$. Since the $h_n\in C$ are an approximate unit, we also have
\[
h_nc_j-c_j = \limsup_{k\to\infty}~\|h^{(n)}_k c^{(j)}_k -c^{(j)}_k\| \stackrel{n\to\infty}{\longrightarrow} 0
\]
for all $j\in\IN$. So by making the $n_j$ bigger, if necessary, we may choose strictly increasing numbers $k_j$ so that
\[
\max_{i\leq j}~\|h^{(n_j)}_k c^{(j)}_k - c^{(j)}_k\|\leq\eps_j\quad\text{and}\quad \max_{g\in K_0}~\|x_k h^{(n_j)}_k-\alpha_g(x_k h^{(n_j)}_k)\| \leq f(g)+\eps_j
\]
for all $k\geq k_j$.
Considering the element $y\in A_\infty$ defined by the bounded sequence
\[
y_k = \begin{cases} 0 &,\quad k < k_1 \\
x_k h_k^{(n_j)} &,\quad k_j\leq k<k_{j+1} \end{cases}
\]
it follows that
\[
\|y-\alpha_{\infty,g}(y)\| \leq \limsup_{j\to\infty} \sup_{k\geq k_j} \|x_k-\alpha_g(x_kh_k^{(n_j)})\|\leq \limsup_{j\to\infty} f(g)+\eps_j = f(g)
\]
for all $g\in K_0$. Hence $y\in A_{\infty,\alpha}$. Moreover, we have
\[
\|[y,c_i]\| = \limsup_{j\to\infty}~ \sup_{k\geq k_j} \|[x_kh^{(n_j)}_k,c_i]\| \leq \limsup_{j\to\infty}~ \sup_{k\geq k_j} \|[h^{(n_j)}_k,c_i]\| = 0
\]
and
\[
\|xc_i-yc_i\| \leq \limsup_{j\to\infty} \sup_{k\geq k_j} \|x_k c_i-x_kh^{(n_j)}_kc_i\| \leq \limsup_{j\to\infty}~ \eps_j = 0
\]
for all $i\in\IN$. But since the $c_i\in C$ are dense, it follows that $y\in A_{\infty,\alpha}\cap A'$ and $x-y\in\ann(C,A_\infty)$. This finishes the proof.
\end{proof}

\begin{prop} \label{sequence-exact}
Let $G$ be a second-countable, locally compact group. Let
\[
\xymatrix{
0 \ar[r]& (J,\alpha) \ar[r] &  (A,\alpha) \ar[r]^\pi &  (B,\beta) \ar[r] & 0
}
\]
be a short exact sequence of $G$-\cstar-dynamical systems. If $\pi_\infty: A_\infty\to B_\infty$ denotes the induced map on the level of sequence algebras, then this induces an equivariant short-exact sequence
\[
\xymatrix{
0 \ar[r]& (J_{\infty,\alpha},\alpha_\infty) \ar[r] &  (A_{\infty,\alpha},\alpha_\infty) \ar[r]^{\pi_\infty} &  (B_{\infty,\beta},\beta_\infty) \ar[r] & 0.
}
\]
\end{prop}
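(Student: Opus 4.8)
The plan is to verify three things—injectivity at $J_{\infty,\alpha}$, exactness at $A_{\infty,\alpha}$, and surjectivity onto $B_{\infty,\beta}$—with essentially all the work concentrated in the last one. First I would record that $\pi_\infty$ is equivariant and contractive, so that it indeed maps $A_{\infty,\alpha}$ into $B_{\infty,\beta}$, and that on the underlying (possibly discontinuous) sequence algebras the non-equivariant sequence $0\to J_\infty\to A_\infty\to B_\infty\to 0$ is exact. The only nontrivial point here, namely $\ker\pi_\infty=J_\infty$, follows in one line: if $\|\pi(x_n)\|\to 0$ then one replaces a representing sequence $(x_n)$ by one taking values in $J$. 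Injectivity of $J_{\infty,\alpha}\to A_{\infty,\alpha}$ is then clear, and exactness in the middle reduces to the tautology $J_{\infty,\alpha}=J_\infty\cap A_{\infty,\alpha}$ (the action on $J_\infty$ being the restriction of $\alpha_\infty$) together with $\ker\pi_\infty=J_\infty$.

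For surjectivity, let $y\in B_{\infty,\beta}$ be represented by a bounded sequence $(b_n)$ and choose, with norm control, a bounded lift $(a_n)$ in $A$, giving $x_0=(a_n)\in A_\infty$ with $\pi_\infty(x_0)=y$; the problem is that $x_0$ need not lie in $A_{\infty,\alpha}$. I would reduce to a separable situation by letting $A_0\subset A$ be the $\alpha$-invariant subalgebra generated by the $a_n$, which is separable since $G$ is second countable and the orbit maps are continuous, and putting $J_0=J\cap A_0$; then Lemma \ref{fixed-approx-unit} produces an approximate unit $(e_m)$ of positive contractions in $J_0$ that is approximately $\alpha$-invariant on compact sets. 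A useful preliminary observation is that continuity of an orbit map $g\mapsto\alpha_{\infty,g}(x)$ at the identity already forces continuity everywhere, since each $\alpha_{\infty,g_0}$ is isometric; so it suffices to arrange continuity of the corrected lift at $e$.

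The correction I would use is $x_n=(\eins-e_{m(n)})a_n$ for a suitable $m(n)\nearrow\infty$. Since $e_{m(n)}\in J$, one has $\pi(x_n)=b_n$, so $\pi_\infty(x)=y$, and $\|x_n\|$ stays bounded. The heart of the matter is to choose $m(n)$ so that $x=(x_n)$ becomes $\alpha_\infty$-continuous. Expanding $\alpha_g(x_n)-x_n$ and using equivariance yields the identity
\[
\alpha_g(x_n)-x_n=(e_{m(n)}-\alpha_g(e_{m(n)}))a_n+\alpha_g\big((\eins-e_{m(n)})(a_n-\alpha_{g^{-1}}(a_n))\big),
\]
whose first summand is controlled by the approximate invariance of $(e_m)$, and whose second summand has norm $\|(\eins-e_{m(n)})(a_n-\alpha_{g^{-1}}(a_n))\|$. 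To bound the latter I would use the standard fact that for an approximate unit of an ideal one has $\|(\eins-e_m)z\|\to\dist(z,J)=\|\pi(z)\|$ for each fixed $z\in A_0$, upgraded to hold uniformly as $z$ ranges over a compact subset of $A_0$ (a quick finite-cover argument, using $\|\eins-e_m\|\le 1$). Applying this to the compact sets $\set{a_n-\alpha_{g^{-1}}(a_n)\mid g\in G_n}$ for an exhaustion $G_n\nearrow G$ lets me pick $m(n)$ so large that the second summand is at most $\|b_n-\beta_{g^{-1}}(b_n)\|+1/n$, while the continuity of $y$ (via Lemma \ref{sequence-lift}) makes $\sup_n\|b_n-\beta_{g^{-1}}(b_n)\|$ uniformly small for $g$ near $e$. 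A diagonal choice of $m(n)$ handling both summands then gives $\limsup_n\sup_{g\in U}\|\alpha_g(x_n)-x_n\|\to 0$ as the neighbourhood $U$ of $e$ shrinks, so that $x\in A_{\infty,\alpha}$.

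The main obstacle is precisely this last continuity estimate: an arbitrary lift destroys continuity, and one must balance two competing requirements—making $e_{m(n)}$ invariant enough to tame the first summand, while simultaneously making it large enough to absorb the (potentially large) part of $a_n-\alpha_{g^{-1}}(a_n)$ lying in $J$. The commutation identity displayed above, together with the uniform-over-compacta version of $\|(\eins-e_m)z\|\to\|\pi(z)\|$ and the reduction to continuity at the identity, is exactly what renders these two demands compatible.
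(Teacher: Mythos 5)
Your proposal is correct and follows essentially the same route as the paper's proof: identify $\ker\pi_\infty=J_\infty$ and $J_{\infty,\alpha}=J_\infty\cap A_{\infty,\alpha}$, then repair an arbitrary bounded lift of $y\in B_{\infty,\beta}$ by multiplying with $\eins-e_{m(n)}$ for an approximate unit of the ideal that is approximately invariant on compact sets (Lemma \ref{fixed-approx-unit}), choosing $m(n)$ diagonally so that the invariance defect of the corrected lift is dominated, via the quotient-norm fact $\|(\eins-e_m)z\|\to\|\pi(z)\|$, by the invariance defect of $(b_n)$, which Lemma \ref{sequence-lift} makes uniformly small near the identity. Your preliminary reduction to the separable $\alpha$-invariant subalgebra $A_0$ and the ideal $J\cap A_0$ is a minor but welcome refinement, since it makes the application of Lemma \ref{fixed-approx-unit} (stated for separable ideals of $\sigma$-unital algebras) literally legitimate without assuming any separability of $A$ or $J$.
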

\begin{proof}
Since $\pi_\infty$ is clearly equivariant, we have $\pi_\infty(A_{\infty,\alpha})\subset B_{\infty,\beta}$. The kernel of this restriction is $A_{\infty,\alpha}\cap\ker(\pi_\infty) = A_{\infty,\alpha}\cap J_{\infty}=J_{\infty,\alpha}$. So the only thing left to show is that $\pi_\infty(A_{\infty,\alpha})= B_{\infty,\beta}$.  Use \ref{fixed-approx-unit} and choose an approximate unit $(h_\lambda)_\lambda$ of $J$ that satisfies $\max_{g\in K}\|\alpha_g(h_\lambda)-h_\lambda\|\to 0$ for every compact set $K\subset G$.

 Let $x\in B_{\infty,\beta}$ be an element represented by a bounded sequence $x_n\in B$. By \ref{sequence-lift}, the function $f: G\to [0,\infty)$ given by $f(g)=\sup_{n\in\IN}\|\beta_g(x_n)-x_n\|$ is continuous. Choose a bounded sequence $y_n\in A$ with $\pi(y_n)=x_n$ for each $n$. Then we have
\[
\begin{array}{ccl}
 \|\beta_g(x_n)-x_n\| &=& \dst\lim_{\lambda\to\infty} \|(\eins-h_\lambda)(\alpha_g(y_n)-y_n)\| \\
 &=& \dst\lim_{\lambda\to\infty} \|\alpha_g\big((\eins-h_\lambda)y_n \big)-(\eins-h_\lambda)y_n\|
\end{array}
\]
for every $n\in\IN$, and this convergence is uniform on compact subsets of $G$. In particular, we may find a sequence of positive contractions $h_n\in J$ and a sequence $K_n\nearrow G$ such that
\[
\|\alpha_g((\eins-h_n)y_n)-(\eins-h_n)y_n\| \leq 1/n+f(g)
\]
for all $g\in K_n$. Considering the bounded sequence $z_n=(\eins-h_n)y_n\in A$, we then have $\pi(z_n)=x_n$ for all $n$. For the resulting element $z\in A_\infty$, we moreover have
\[
\|\alpha_g(z)-z\| = \limsup_{n\to\infty} \|\alpha_g(z_n)-z_n\| \leq f(g)
\]
for all $g\in G$. As $f$ was continuous, this shows $z\in A_{\infty,\alpha}$ and finishes the proof.
\end{proof}


\section{Extensions}

\noindent
In this section, we will show that given a semi-strongly self-absorbing $G$-action $\gamma$, the class of all separable, $\gamma$-absorbing $G$-\cstar-dynamical systems is closed under equivariant extensions, at least under a mild extra condition. In fact, this turns out to be true precisely when $\gamma$ is unitarily regular.

\begin{defi}
Let $D_1$ and $D_2$ be two unital \cstar-algebras. Define
\[
\CE(D_1,D_2)=\set{f\in\CC\big( [0,1], D_1\otimes_{\max} D_2 \big) \mid f(0)\in D_1\otimes\eins,\ f(1)\in\eins\otimes D_2 }
\]
and c.p.c.\ order zero maps $\mu_i: D_i\to\CE(D_1,D_2)$ for $i=1,2$ via
\[
\mu_i(d_i)(t) = \begin{cases} (1-t)(d_1\otimes\eins) &,\quad i=1 \\
t(\eins\otimes d_2) &,\quad i=2 \end{cases}
\]
for all $d_i\in D_i$ and $0\leq t\leq 1$.
\end{defi}

\begin{rem} \label{join}
Let $X$ and $Y$ be two compact Hausdorff spaces. If we apply the above construction to $D_1=\CC(X)$ and $D_2=\CC(Y)$, then there is a natural isomorphism
\[
\CE(\CC(X),\CC(Y)) \cong \CC(X\star Y),
\]
where $X\star Y$ is the join of $X$ and $Y$, i.e.\ the quotient space $[0,1]\times X\times Y/\sim$ by identifying $(0,x,y_1)\sim (0,x,y_2)$ and $(1,x_1,y)\sim (1,x_2,y)$. The c.p.c.\ order zero maps $\mu_1, \mu_2$ have no proper topological counterparts, however. For example, $\mu_1$ would need to correspond to the (not well-defined) map $X\star Y$ to $[0,1)\times X$ sending the equivalence class of a triple $(t,x,y)$ to $(t,x)$.
\end{rem}

The following fact follows from the characterization of c.p.c.\ order zero maps as $*$-homomorphisms from the cone \cite[4.1]{WinterZacharias09}, and applying \cite[5.2]{HirshbergWinterZacharias15} to the cones over $D_1$ and $D_2$. See also the proof of \cite[1.17]{Kirchberg04}, where this observation had previously appeared in a very similar context as this section.

\begin{prop}
Let $D_1$ and $D_2$ be two unital \cstar-algebras. Then the \cstar-algebra $\CE(D_1,D_2)$ and the maps $\mu_1, \mu_2$ satisfy the following universal property:

Whenever $B$ is a unital \cstar-algebra and $\eta_i: D_i\to B$ are c.p.c.\ order zero maps for $i=1,2$ with commuting ranges such that $\eta_1(\eins)+\eta_2(\eins)=\eins$, then there exists a unique unital $*$-homomorphism $\eta: \CE(D_1,D_2)\to B$ with $\eta_i = \eta\circ\mu_i$ for $i=1,2$.
\end{prop}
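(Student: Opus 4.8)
The plan is to reduce everything to the structure theory of order zero maps, as the cited results suggest, obtaining $\eta$ by transporting the two order zero maps to the maximal tensor product of the cones over $D_1$ and $D_2$ and then cutting down by the relation $\eta_1(\eins)+\eta_2(\eins)=\eins$.

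First I would use \cite[4.1]{WinterZacharias09} to replace each $\eta_i$ by the $*$-homomorphism $\rho_i\colon C_0((0,1])\otimes D_i\to B$ from the cone determined by $\rho_i(\iota\otimes d)=\eta_i(d)$, where $\iota(s)=s$. Since $\rho_i$ has range $C^*(\eta_i(D_i))$ and the ranges of $\eta_1,\eta_2$ commute, the two cone homomorphisms have commuting ranges, so \cite[5.2]{HirshbergWinterZacharias15} (applied to the cones over $D_1$ and $D_2$) produces a single $*$-homomorphism $\rho$ out of their maximal tensor product. As $C_0((0,1])$ is nuclear, this tensor product is canonically $C_0((0,1]^2)\otimes(D_1\otimes_{\max}D_2)$, and under this identification the extension $\bar\rho$ to multipliers sends the coordinate functions $s_i$ to $\eta_i(\eins)$, while the constant $D_i$-valued functions map to the supporting $*$-homomorphisms $\pi_i$ of $\eta_i$, so that $\eta_i=\eta_i(\eins)\,\pi_i(\cdot)$.

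Next I would exploit the hypothesis $\eta_1(\eins)+\eta_2(\eins)=\eins_B$, which reads $\bar\rho(s_1+s_2-1)=0$. Hence $\rho$ annihilates the ideal of functions vanishing on the antidiagonal $Z=\{(s_1,s_2):s_1+s_2=1\}\cap(0,1]^2$, so it factors through restriction to $Z$, which is homeomorphic to $(0,1)$ via $t=s_2$. This yields a $*$-homomorphism $\rho'\colon C_0\big((0,1),D_1\otimes_{\max}D_2\big)\to B$ whose extension $\bar\rho'$ to the multiplier algebra is \emph{unital}: viewing $\iota(t)=t$ and $1-\iota$ as bounded multipliers, one has $\bar\rho'(\iota)=\eta_2(\eins)$ and $\bar\rho'(1-\iota)=\eta_1(\eins)$, whence $\bar\rho'(\eins)=\eta_1(\eins)+\eta_2(\eins)=\eins_B$. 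Restricting functions from $[0,1]$ to $(0,1)$ gives a unital $*$-homomorphism $r\colon\CE(D_1,D_2)\to\CM\big(C_0((0,1),D_1\otimes_{\max}D_2)\big)$, and I would set $\eta=\bar\rho'\circ r$. On the generators this gives $\eta(\mu_2(d_2))=\bar\rho'\big(\iota\cdot(\eins\otimes d_2)\big)=\eta_2(\eins)\,\pi_2(d_2)=\eta_2(d_2)$ and, symmetrically, $\eta\circ\mu_1=\eta_1$.

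For uniqueness I would first record that $\CE(D_1,D_2)$ is generated as a \cstar-algebra by $\mu_1(D_1)\cup\mu_2(D_2)$; this is exactly where the boundary conditions in the definition of $\CE$ are used, and it follows from a Stone--Weierstrass argument on $[0,1]$ together with the fact that $\mu_1(d_1)$ and $\mu_2(d_2)$ have commuting images. A unital $*$-homomorphism with $\eta_i=\eta\circ\mu_i$ is then determined on a generating set, hence unique. The main obstacle is not the overall strategy but the bookkeeping in the two identifications of the second and third steps: checking that $\bar\rho$ carries the coordinate functions to $\eta_i(\eins)$ and the constants to the supporting $*$-homomorphisms $\pi_i$, and that the passage to the antidiagonal together with the multiplier extension produces a unital map matching the order zero data exactly. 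These are precisely the points controlled by \cite[4.1]{WinterZacharias09} and \cite[5.2]{HirshbergWinterZacharias15}.
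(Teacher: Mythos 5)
Your first two steps follow exactly the route the paper indicates (the cone picture of order zero maps from \cite[4.1]{WinterZacharias09}, then \cite[5.2]{HirshbergWinterZacharias15} applied to the two commuting cone homomorphisms, together with the nuclearity identification of the maximal tensor product of the cones with $C_0((0,1]^2)\otimes(D_1\otimes_{\max}D_2)$), and your factorization of $\rho$ through the antidiagonal as well as your uniqueness argument via generation of $\CE(D_1,D_2)$ by $\mu_1(D_1)\cup\mu_2(D_2)$ are correct. The gap is in the last existence step. The multiplier extension $\bar\rho'$ of $\rho'\colon J:=C_0\big((0,1),D_1\otimes_{\max}D_2\big)\to B$ is a priori a map into $\CM\big(\rho'(J)\big)$, \emph{not} into $B$; to read it as a map into $B$ you need $\rho'$ to be nondegenerate, i.e.\ $\overline{\rho'(J)B}=B$, and this can fail completely. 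Your verification that $\bar\rho'$ is unital is circular: the identities $\bar\rho'(\iota)=\eta_2(\eins)$ and $\bar\rho'(1-\iota)=\eta_1(\eins)$ already presuppose that these multipliers are implemented by canonical elements of $B$, which is exactly what nondegeneracy would buy you.

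Here is a concrete counterexample to the construction as written. Take $D_1=D_2=\IC$, $B=\IC\oplus\IC$, and $\eta_i(\lambda)=\lambda p_i$ with $p_1=(1,0)$, $p_2=(0,1)$; these are c.p.c.\ order zero maps with commuting ranges and $\eta_1(\eins)+\eta_2(\eins)=\eins_B$, and $\CE(\IC,\IC)=\CC[0,1]$, the unique solution of the universal property being $\eta(f)=(f(0),f(1))$. But the cone homomorphisms $\rho_i(f)=f(p_i)$ have \emph{orthogonal} ranges, since $p_1p_2=0$, so the homomorphism $\rho$ produced by \cite[5.2]{HirshbergWinterZacharias15} is identically zero; hence $\rho'=0$, $\CM(\rho'(J))=0$, and $\bar\rho'\circ r$ is the zero map, which recovers neither $\eta_1$ nor $\eta_2$. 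The point is that all information about $\eta_i$ living at the endpoints $t\in\{0,1\}$ (equivalently, at the spectral values $0$ and $1$ of $\eta_2(\eins)$) is invisible to $\rho'$ and to its multiplier algebra; this is not an exotic situation, since in the paper's application of this proposition the contractions $\eta_i(\eins)$ arise from approximate units and may well be projections. The repair is to feed the endpoint data in by hand: every $f\in\CE(D_1,D_2)$ decomposes uniquely as $f=\mu_1(d_1)+\mu_2(d_2)+g$ with $f(0)=d_1\otimes\eins$, $f(1)=\eins\otimes d_2$ and $g\in C_0((0,1))\otimes(D_1\otimes_{\max}D_2)$; one defines $\eta(f):=\eta_1(d_1)+\eta_2(d_2)+\rho'(g)$ and then checks multiplicativity and $*$-compatibility directly, using the structure theorem $\eta_i=\eta_i(\eins)\,\pi_i(\cdot)$ with commuting data and the relation $\eta_1(\eins)=\eins-\eta_2(\eins)$. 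This is the computation in the proof of \cite[1.17]{Kirchberg04} to which the paper points; your $\rho'$ is still the right tool for evaluating the interior term $g$, but it cannot carry the two boundary terms.
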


\begin{rem}
Let $G$ be a locally compact group. For two actions $\gamma_i: G\curvearrowright D_i$ for $i=1,2$, it follows from the above that we get a well-defined, point-norm continuous action $\CE(\gamma_1,\gamma_2): G\curvearrowright\CE(D_1,D_2)$ via $\CE(\gamma_1,\gamma_2)_g\circ\mu_i = \mu_i\circ\gamma_{i,g}$ for $i=1,2$ and $g\in G$. (In the continuous function picture, one simply applies the action $\gamma_1\otimes\gamma_2$ on each fibre.)
Then one immediately gets an equivariant analogue of the above universality result:
\end{rem}

\begin{prop} \label{G-universal}
Let $G$ be a locally compact group.
Let $D_1$ and $D_2$ be two unital \cstar-algebras, and let $\gamma_i: G\curvearrowright D_i$ for $i=1,2$ be two actions. Then the \cstar-dynamical system $\big( \CE(D_1,D_2), \CE(\gamma_1,\gamma_2) \big)$ and the equivariant maps $\mu_1, \mu_2$ satisfy the following universal property:

Whenever $B$ is a unital \cstar-algebra with an action $\beta: G\curvearrowright B$ and $\eta_i: (D_i,\gamma_i)\to (B,\beta)$ are equivariant c.p.c.\ order zero maps for $i=1,2$ with commuting ranges such that $\eta_1(\eins)+\eta_2(\eins)=\eins$, there exists a unique unital and equivariant $*$-homomorphism $\eta: \big( \CE(D_1,D_2), \CE(\gamma_1,\gamma_2) \big) \to (B,\beta)$ with $\eta_i = \eta\circ\mu_i$ for $i=1,2$.
\end{prop}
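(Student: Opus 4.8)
The plan is to reduce everything to the non-equivariant universal property stated just above, treating equivariance as the only genuinely new content. First I would apply that proposition directly: since the $\eta_i$ are in particular c.p.c.\ order zero maps with commuting ranges satisfying $\eta_1(\eins)+\eta_2(\eins)=\eins$, it already furnishes a \emph{unique} unital $*$-homomorphism $\eta: \CE(D_1,D_2)\to B$ with $\eta_i=\eta\circ\mu_i$ for $i=1,2$. This settles existence and uniqueness at the level of unital $*$-homomorphisms at once; note that any equivariant map satisfying $\eta_i=\eta\circ\mu_i$ is in particular such a unital $*$-homomorphism, so the uniqueness clause of the present proposition follows immediately from that of the non-equivariant one. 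Thus the whole task reduces to verifying that this $\eta$ is automatically equivariant.

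The key idea for equivariance is to exploit the uniqueness clause a second time. Fix $g\in G$ and consider the two maps $\eta\circ\CE(\gamma_1,\gamma_2)_g$ and $\beta_g\circ\eta$, both of which are unital $*$-homomorphisms from $\CE(D_1,D_2)$ to $B$. I would show they coincide by checking that they agree after precomposition with each $\mu_i$, and then invoking uniqueness. For this, set $\zeta_i=\beta_g\circ\eta_i$; these are c.p.c.\ order zero maps (the $\eta_i$ composed with the automorphism $\beta_g$), they have commuting ranges, and $\zeta_1(\eins)+\zeta_2(\eins)=\beta_g(\eins)=\eins$, so the universal property applies to the pair $(\zeta_1,\zeta_2)$ and singles out a unique unital $*$-homomorphism factoring them through $\mu_1,\mu_2$.

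It then remains to compute both composites. Using the defining relation $\CE(\gamma_1,\gamma_2)_g\circ\mu_i=\mu_i\circ\gamma_{i,g}$ of the induced action together with the equivariance of $\eta_i$, one finds
\[
(\eta\circ\CE(\gamma_1,\gamma_2)_g)\circ\mu_i = \eta\circ\mu_i\circ\gamma_{i,g} = \eta_i\circ\gamma_{i,g} = \beta_g\circ\eta_i = \zeta_i,
\]
while directly $(\beta_g\circ\eta)\circ\mu_i = \beta_g\circ\eta_i = \zeta_i$. Hence both unital $*$-homomorphisms factor the pair $(\zeta_1,\zeta_2)$ through $\mu_1,\mu_2$, and uniqueness forces $\eta\circ\CE(\gamma_1,\gamma_2)_g=\beta_g\circ\eta$. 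As $g\in G$ was arbitrary, $\eta$ is equivariant, which completes the argument. I do not expect any serious obstacle here: the statement is essentially a formal consequence of the non-equivariant universal property and its uniqueness clause, and the only non-formal input — the well-definedness and point-norm continuity of the induced action $\CE(\gamma_1,\gamma_2)$ — has already been recorded in the remark preceding the proposition.
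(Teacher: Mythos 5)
Your proof is correct, and it is essentially the paper's own argument: the paper states this proposition without proof, remarking that it follows "immediately" from the non-equivariant universal property, and your write-up is exactly the formal verification intended -- existence and uniqueness from the non-equivariant proposition, then equivariance by applying its uniqueness clause to the pair $\beta_g\circ\eta_i$. Nothing is missing.
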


The following technical observation is most likely well-known among \cstar-algebraists. Since I could not find a reference for this specific statement, a proof is provided for the reader's convenience.

\begin{lemma} \label{approx units}
Let $A$ be a $\sigma$-unital \cstar-algebra and $J\subset A$ an ideal. Denote by $\pi: A\to A/J$ the quotient map. Let $h_n\in J$ be a countable approximate unit that is quasicentral relative to $A$, and let $a_n\in A$ be positive contractions such that the sequence $\pi(a_n)\in A/J$ is a countable approximate unit. Then there is a sequence $1\leq n_1 < n_2<\dots$ such that the sequence 
\[
h_{n_k}+(\eins-h_{n_k})^{1/2}a_k(\eins-h_{n_k})^{1/2}\in A
\] 
defines a countable approximate unit.
\end{lemma}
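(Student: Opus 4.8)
The plan is to reduce the approximate-unit property to a single test against a strictly positive element, and then to choose the indices $n_k$ inductively so that the algebraic identity $\eins-e_k=(\eins-h_{n_k})^{1/2}(\eins-a_k)(\eins-h_{n_k})^{1/2}$ can be exploited via quasicentrality. First I would record that each candidate $e_k=h_{n_k}+(\eins-h_{n_k})^{1/2}a_k(\eins-h_{n_k})^{1/2}$ is a positive contraction lying in $A$: positivity and membership in $A$ are clear (as $J$ is an ideal), and from $0\leq a_k\leq\eins$ one gets $(\eins-h_{n_k})^{1/2}a_k(\eins-h_{n_k})^{1/2}\leq\eins-h_{n_k}$, whence $e_k\leq\eins$. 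The very same computation yields the crucial identity $\eins-e_k=(\eins-h_{n_k})^{1/2}(\eins-a_k)(\eins-h_{n_k})^{1/2}$. Since $A$ is $\sigma$-unital, I fix a strictly positive element $s\in A$; because $\overline{sA}=A$, a routine approximation argument shows that a sequence of positive contractions $(e_k)_k$ is an approximate unit as soon as $\|s-e_ks\|\to 0$ (the two-sided statement following since the $e_k$ are self-adjoint). Thus it suffices to arrange $\|s-e_ks\|\to 0$.

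Next I would isolate the quotient behaviour. Since $h_{n_k}\in J$ we have $\pi(e_k)=\pi(a_k)$, so modulo $J$ the situation is governed by $k$ alone: as $\pi(a_k)$ is an approximate unit of $A/J$, it follows that $\|\pi((\eins-a_k)s)\|\to 0$, and I would fix $b_k\in J$ with $\|(\eins-a_k)s-b_k\|=:\eps_k\to 0$. Writing $h=h_{n_k}$, $g=(\eins-h)^{1/2}$ and $a=a_k$, the identity gives $(\eins-e_k)s=g(\eins-a)gs$. Here the two outer factors $g$ flank the middle term and commute neither with $a$ nor with $s$; this non-commutativity is the main obstacle, and it is exactly where quasicentrality enters. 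Because $(h_n)_n$ is quasicentral relative to $A$, functional calculus gives $\|[(\eins-h_n)^{1/2},c]\|\to 0$ and $\|[h_n,c]\|\to 0$ for every fixed $c\in A$. Commuting the left factor $g$ past $(\eins-a)$ reduces $g(\eins-a)gs$ to $(\eins-a)g^2s=(\eins-a)(\eins-h)s$ up to an error bounded by $\|[g,a]\|\,\|s\|$; a further commutation of $(\eins-h)$ past $(\eins-a)$ reduces this to $(\eins-h)(\eins-a)s$ up to $\|[h,a]\|\,\|s\|$; and replacing $(\eins-a)s$ by $b_k$ reaches $(\eins-h)b_k$ up to $\eps_k$. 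Finally $\|(\eins-h_n)b_k\|=\|b_k-h_nb_k\|\to 0$, since $(h_n)_n$ is an approximate unit of $J$ and $b_k\in J$.

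With these estimates in hand, I would carry out the inductive selection: choose $1\leq n_1<n_2<\cdots$ so that, at stage $k$, the finitely many quantities $\|[(\eins-h_{n_k})^{1/2},a_k]\|$, $\|[h_{n_k},a_k]\|$ and $\|b_k-h_{n_k}b_k\|$ are all at most $1/k$, which is possible since each tends to $0$ as the index grows with $a_k$ and $b_k$ held fixed. Combining the three approximations then gives $\|s-e_ks\|\leq C\,\|s\|/k+\eps_k\to 0$, and the reduction from the first paragraph shows that $(e_k)_k$ is an approximate unit. The only genuinely delicate point is the bookkeeping of the commutator errors produced by the non-central middle factor $(\eins-h_{n_k})^{1/2}$; everything else is formal, and the interplay of the two roles of $(h_n)_n$ — approximate unit of $J$ on one hand, quasicentral in $A$ on the other — is what makes the gluing succeed.
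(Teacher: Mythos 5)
Your proof is correct and follows essentially the same route as the paper's: test against a strictly positive element, use quasicentrality plus functional calculus to replace $(\eins-h_{n_k})^{1/2}a_k(\eins-h_{n_k})^{1/2}$ by $(\eins-h_{n_k})(\eins-a_k)$ up to a small error, and then exploit that $\pi(a_k)$ is an approximate unit in $A/J$. The only cosmetic differences are that where the paper invokes the quotient-norm identity $\|\pi(y)\|=\lim_{n\to\infty}\|(\eins-h_n)y\|$ directly, you unpack it by hand via the auxiliary elements $b_k\in J$, and you perform one extra (harmless) commutator swap that the paper avoids by keeping $(\eins-h_{n_k})$ on the left throughout.
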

\begin{proof}
Pick a strictly positive contraction $b\in A$.
For every $n\in\IN$, we have
\[
\|(\eins-\pi(a_n))\pi(b)\| = \|\pi((\eins-a_n)b)\| = \lim_{k\to\infty} \|(\eins-h_k)(\eins-a_n)b\|
\]
Using that $h_n$ is quasicentral, we may thus choose the $n_k$ such that 
\[
\|[a_k,(\eins-h_{n_k})^{1/2}]\|\leq 1/k
\] 
and such that 
\[
\|(\eins-h_{n_k})(\eins-a_k)b\|\leq \|(\eins-\pi(a_k))\pi(b)\|+1/k.
\] 
It then follows that
\[
\begin{array}{lll}
\multicolumn{3}{l}{ \| \big( \eins-h_{n_k}-(\eins-h_{n_k})^{1/2}a_k(\eins-h_{n_k})^{1/2} \big) b\| } \\
\hspace{2mm} & \leq & 1/k + \| \big( \eins-h_{n_k}-(\eins-h_{n_k})a_k \big) b\| \\
&=& 1/k + \|(\eins-h_{n_k})(\eins-a_k)b\| \\
& \leq & 2/k + \|(\eins-\pi(a_k))\pi(b)\| \quad\stackrel{k\to\infty}{\longrightarrow} 0.
\end{array}
\]
\end{proof}

\begin{lemma}[\cf{Kirchberg04}{1.17}]
\label{ce-lemma}
Let $G$ be a second-countable, locally compact group. Let $D_1$ and $D_2$ be two separable, unital \cstar-algebras and $\gamma_i : G\curvearrowright D_i$ an action for $i=1,2$. Let
\[
\xymatrix{
0 \ar[r]& (J,\alpha) \ar[r] &  (A,\alpha) \ar[r]^\pi &  (B,\beta) \ar[r] & 0
}
\]
be a short exact sequence of separable $G$-\cstar-dynamical systems. Suppose that there exist unital and equivariant $*$-homomorphisms 
\[
\kappa: (D_1,\gamma_1)\to \big( F_{\infty,\beta}(B), \tilde{\beta}_\infty \big) \quad\text{and}\quad \kappa': (D_2,\gamma_2) \to \big( F_{\infty,\alpha}(J), \tilde{\alpha}_\infty \big).
\] 
Then there exists a unital and equivariant $*$-homomorphism 
\[
\Psi: \big( \CE(D_1,D_2), \CE(\gamma_1,\gamma_2) \big) \to \big( F_{\infty,\alpha}(A), \tilde{\alpha}_\infty \big)
\]
with $\tilde{\pi}_\infty\circ\Psi = \kappa\circ\ev_0$.
\end{lemma}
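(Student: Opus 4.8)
The plan is to build $\Psi$ through the equivariant universal property of $\big(\CE(D_1,D_2),\CE(\gamma_1,\gamma_2)\big)$ established in Proposition \ref{G-universal}. Concretely, it suffices to produce two equivariant c.p.c.\ order zero maps
\[
\eta_1: (D_1,\gamma_1)\to \big(F_{\infty,\alpha}(A),\tilde\alpha_\infty\big),\qquad \eta_2: (D_2,\gamma_2)\to \big(F_{\infty,\alpha}(A),\tilde\alpha_\infty\big)
\]
with commuting ranges and $\eta_1(\eins)+\eta_2(\eins)=\eins$, and to arrange in addition that $\tilde\pi_\infty\circ\eta_1=\kappa$ and $\tilde\pi_\infty\circ\eta_2=0$. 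Granting this, Proposition \ref{G-universal} yields a unique unital equivariant $*$-homomorphism $\Psi=\eta$ with $\eta\circ\mu_i=\eta_i$. Since $\ev_0\circ\mu_1=\id_{D_1}$ and $\ev_0\circ\mu_2=0$, both $\tilde\pi_\infty\circ\Psi$ and $\kappa\circ\ev_0$ are unital $*$-homomorphisms whose compositions with $\mu_1,\mu_2$ agree (they both give the pair $(\kappa,0)$, which has commuting ranges summing to the unit); by the uniqueness part of Proposition \ref{G-universal} they coincide, which is exactly $\tilde\pi_\infty\circ\Psi=\kappa\circ\ev_0$. So the whole problem reduces to constructing $\eta_1$ and $\eta_2$.

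For $\eta_1$, I would first identify $\tilde\pi_\infty$ as a strongly locally semi-split surjection with $G$-$\sigma$-ideal kernel. By Proposition \ref{sequence-exact} the induced sequence $0\to J_{\infty,\alpha}\to A_{\infty,\alpha}\xrightarrow{\pi_\infty} B_{\infty,\beta}\to 0$ is exact and equivariant, and $J_{\infty,\alpha}$ is a $G$-$\sigma$-ideal by Example \ref{ideal-sequences}. Passing to relative commutants of the (separable) constant copies of $A$ and $B$ via Proposition \ref{strong lift}(i), and then dividing out the annihilator $G$-$\sigma$-ideals of Example \ref{ann-sigma}, Lemma \ref{F(A)-alternativ} identifies the resulting quotients with $F_{\infty,\alpha}(A)$ and $F_{\infty,\beta}(B)$ and exhibits $\tilde\pi_\infty$ as a strongly locally semi-split surjection whose kernel is a $G$-$\sigma$-ideal. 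Applying Proposition \ref{strong lift}(ii) to the separable, $\tilde\beta_\infty$-invariant subsystem $\kappa(D_1)\subset F_{\infty,\beta}(B)$ produces an equivariant c.p.c.\ order zero lift of the inclusion, and precomposing with $\kappa$ gives $\eta_1$ with $\tilde\pi_\infty\circ\eta_1=\kappa$.

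The construction of $\eta_2$ is the main obstacle, for two reasons: its image must sit in the relative commutant of $\eta_1(D_1)$ with support matching $\eins-\eta_1(\eins)$ so that $\eta_1(\eins)+\eta_2(\eins)=\eins$, and it must be transferred from the relative central sequence algebra of $J$, where $\kappa'$ lives, into that of $A$. This transfer is the essential point: an element of $J_{\infty,\alpha}$ commuting with $J$ need not commute with $A$, and one repairs this by compressing with a quasicentral approximate unit. I would use Lemma \ref{fixed-approx-unit} to choose an approximate unit $(e_n)$ of $J$ that is quasicentral relative to $A$ and asymptotically $\alpha$-fixed (so all compressions remain in the continuous part $A_{\infty,\alpha}$), lift $\kappa'$ to an asymptotically multiplicative, asymptotically equivariant sequence of maps into $J$, and form the compressions $e_n^{1/2}(\,\cdot\,)e_n^{1/2}$; quasicentrality of $(e_n)$ renders these asymptotically central in $A$, yielding the order zero map $\eta_2$, whose image lies in $J_\infty$ so that $\tilde\pi_\infty\circ\eta_2=0$ automatically. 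To match the supports, I would lift $\kappa(\eins)=\eins$ to positive contractions $a_n\in A$ with $\pi(a_n)$ an approximate unit of $B$ and read off $\eta_1(\eins)$ from $(\eins-e_n)^{1/2}a_n(\eins-e_n)^{1/2}$; Lemma \ref{approx units} then guarantees, after passing to a suitable subsequence, that $e_{n_k}+(\eins-e_{n_k})^{1/2}a_k(\eins-e_{n_k})^{1/2}$ is an approximate unit of $A$, hence represents $\eins$ in $F_{\infty,\alpha}(A)$, which is precisely $\eta_1(\eins)+\eta_2(\eins)=\eins$.

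Finally, I would carry out the above inside a single diagonal sequence so that the countably many requirements all hold in the limit: the order zero relations for $\eta_1$ and $\eta_2$, the commuting ranges $[\eta_1(D_1),\eta_2(D_2)]=0$, asymptotic $\alpha$-invariance on an exhausting family of compact sets, the compatibilities $\tilde\pi_\infty\circ\eta_1=\kappa$ and $\tilde\pi_\infty\circ\eta_2=0$, and the unit relation coming from Lemma \ref{approx units}. The commuting ranges in particular are forced by a standard reindexation making the $J$-directed data defining $\eta_2$ relatively central to the data defining $\eta_1$. With $\eta_1$ and $\eta_2$ so obtained, Proposition \ref{G-universal} delivers $\Psi$, and the identity $\tilde\pi_\infty\circ\Psi=\kappa\circ\ev_0$ is the bookkeeping already described in the first paragraph. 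I expect the reindexation bundling all these constraints together, and especially the simultaneous control of the order zero structure and of the two complementary supports via Lemma \ref{approx units}, to be the technical heart of the argument.
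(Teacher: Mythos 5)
Your overall architecture is the paper's: produce equivariant c.p.c.\ order zero maps $\eta_1,\eta_2$ into $F_{\infty,\alpha}(A)$ with commuting ranges and $\eta_1(\eins)+\eta_2(\eins)=\eins$, feed them into Proposition \ref{G-universal}, and obtain $\tilde{\pi}_\infty\circ\Psi=\kappa\circ\ev_0$ from the uniqueness clause (this last bookkeeping, which the paper leaves implicit, is correct in your version). Your construction of $\eta_1$ is also the paper's first step: $\kappa$ admits an equivariant c.p.c.\ order zero lift because $A_{\infty,\alpha}\cap A'\to B_{\infty,\beta}\cap B'\to F_{\infty,\beta}(B)$ is a composition of surjections with $G$-$\sigma$-ideal kernels (\ref{sequence-exact}, \ref{ideal-sequences}, \ref{strong lift}, \ref{ann-sigma}, \ref{F(A)-alternativ}), hence strongly locally semi-split. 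One quibble: $\kappa'$ cannot be lifted to an ``asymptotically multiplicative sequence of maps into $J$'' --- no Choi--Effros-type theorem applies since $D_2$ is not assumed nuclear; the same $\sigma$-ideal machinery instead gives an equivariant c.p.c.\ order zero map $\psi_2''\colon (D_2,\gamma_2)\to(J_{\infty,\alpha}\cap J',\alpha_\infty)$ lifting $\kappa'$, whose values one then represents by sequences in $J$. That much is fixable.

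The genuine gap is the order of choices in the construction of $\eta_2$. The defining properties of any such lift, namely $\psi_2''(\eins)c=c$ and $[\psi_2''(x),c]=0$, hold only against the \emph{constant} copy of $J$ inside $J_\infty$, whereas both your compression element $e=[(e_{n_k})_k]$ and the representatives of $\eta_1$ are genuinely non-constant sequences. If, as you propose, the lift of $\kappa'$ is fixed first and only afterwards compressed by a quasicentral approximate unit, nothing forces $e^{1/2}\psi_2''(\eins)e^{1/2}$ to be the class of $(e_{n_k})_k$: positive contractions in $J_{\infty,\alpha}\cap J'$ acting as a unit on the constant copy of $J$ (for instance classes of quasicentral approximate units of $J$) typically fail to act as units on one another, and Lemma \ref{approx units} chooses the subsequence $(n_k)$ with no reference whatsoever to the $\kappa'$-data. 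So your key assertion that $e_{n_k}+(\eins-e_{n_k})^{1/2}a_k(\eins-e_{n_k})^{1/2}$ being an approximate unit of $A$ ``is precisely $\eta_1(\eins)+\eta_2(\eins)=\eins$'' is unjustified --- it presupposes $\eta_2(\eins)=[(e_{n_k})_k]$ --- and the same issue breaks $[\eta_1(D_1),\eta_2(D_2)]=0$. Your closing appeal to ``a standard reindexation'' is aimed only at the commuting ranges; the unit relation needs it just as much, and this is exactly the technical heart of the lemma. The paper resolves it by reversing the order: having built the $D_1$-side data $\psi$ and $e_0$ (via Lemma \ref{approx units}), it first fixes, using that $J_{\infty,\alpha}$ is a $G$-$\sigma$-ideal, a positive contraction $h_1\in\big(J_{\infty,\alpha}\cap C'\big)^{\alpha_\infty}$ acting as an \emph{exact} unit on $J_{\infty,\alpha}\cap C$ for $C=\mathrm{C}^*(A\cup\psi(D_1)\cup\{e_0\})$, and only \emph{then} lifts $\kappa'$ --- via \ref{reindex1} applied to $C_0=\mathrm{C}^*(h_1C\cup\{h_1\})$ together with \ref{ann-sigma}, \ref{F(A)-alternativ} and \ref{strong lift} --- to a map $\psi_2'$ commuting with $C_0$ and satisfying $\psi_2'(\eins)c=c$ for all $c\in C_0$, in particular acting unitally on $h_1$ itself; then $\psi_2=h_1\psi_2'$ has the right support. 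Your plan can be repaired equivalently by a nested diagonalization in which the index of the $\kappa'$-representatives in the $k$-th entry is chosen \emph{after} $e_{n_k}$ and after the $k$-th entries of the $\eta_1$-representatives, but as written the construction of $\eta_2$ fails.
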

\begin{proof}
Let $\kappa: (D_1,\gamma_1) \to \big( F_{\infty,\beta}(B), \tilde{\beta}_\infty \big)$ be as above.
Combining \ref{F(A)-alternativ}, \ref{sequence-exact}, \ref{ideal-sequences} and \ref{strong lift}\ref{strong-lift:1}, we see that the two canonical maps
\[
A_{\infty,\alpha}\cap A'\to B_{\infty,\beta}\cap B' \quad\text{and}\quad B_{\infty,\beta}\cap B'\to F_{\infty,\beta}(B)
\]
are surjective. By \ref{ideal-sequences}, \ref{strong lift}\ref{strong-lift:1} and \ref{ann-sigma}, the kernels of both these maps are $G$-$\sigma$-ideals, respectively. So by \ref{strong lift}\ref{strong-lift:2}, they are strongly locally semi-split, and so is their composition. Let 
\[
\psi': (D_1,\gamma_1) \to (A_{\infty,\alpha}\cap A',\alpha_\infty)
\] 
be an equivariant c.p.c.\ order zero map lifting $\kappa$ in the sense that 
\[
\kappa(x)=\pi_\infty(\psi'(x))+\ann(B,B_\infty)
\] 
for all $x\in D_1$. Then the positive contraction $\pi_\infty(\psi'(\eins))\in \big( B_{\infty,\beta} \big)^{\beta_\infty}$ acts as a unit on $B$. 
By \ref{fixed-approx-unit}, we can choose a countable approximate unit $h_n\in J$ that is quasicentral relative to $A$ and satisfies $\max_{g\in K}~ \|\alpha_g(h_n)-h_n\|\to 0$ for all compact sets $K\subset G$. By passing to some suitable subsequence $h_{n_k}$, we can combine \ref{approx units} with a reindexation trick and obtain a positive contraction $h_0\in (J_{\infty,\alpha})^{\alpha_\infty}$ commuting with both $\psi'(D_1)$ and $A$, and such that 
\[
e_0=h_0+(\eins-h_0)\psi'(\eins)\in (A_{\infty,\alpha})^{\alpha_\infty}
\] 
acts as a unit on $A$. Consider the equivariant c.p.c.~order zero map 
\[
\psi=(\eins-h_0)\cdot \psi': (D_1,\gamma_1)\to (A_{\infty,\alpha}\cap A', \alpha_\infty),
\] 
which is still a lift for $\kappa$.

Let $C$ be the $\alpha_\infty$-invariant \cstar-subalgebra of $A_{\infty,\alpha}$ that is generated by $A\cup\psi(D_1)\cup\set{e_0}$. The ideal $J_{\infty,\alpha}\subset A_{\infty,\alpha}$ is a $G$-$\sigma$-ideal by \ref{ideal-sequences}. Find a positive contraction $h_1\in \big( J_{\infty,\alpha}\cap C' \big)^{\alpha_\infty}$ satisfying $h_1c=c$ for all $c\in J_{\infty,\alpha}\cap C$. Then $e_0-\psi(\eins)=h_0\in J_{\infty,\alpha}\cap C$, so $(\eins-h_1)(e_0-\psi(\eins))=0$. We obtain another equivariant c.p.c.~order zero map 
\[
\psi_1=(\eins-h_1)\cdot\psi: (D_1,\gamma_1)\to (A_{\infty,\alpha}\cap A',\alpha_\infty).
\] 
Note that we still have $\kappa(x)=\pi_\infty(\psi_1(x))+\ann(B,B_\infty)$ for all $x\in D_1$. Let $C_0\subset J_{\infty,\alpha}$ be the $\alpha_\infty$-invariant \cstar-algebra generated by $h_1C\cup\set{h_1}$.
 
By assumption, there exists a unital and equivariant $*$-homomorphism from $(D_2,\gamma_2)$ to $\big( F_{\infty,\alpha}(J), \tilde{\alpha}_\infty \big)$. By \ref{reindex1}, there is thus also a unital $*$-homomorphism from $(D_2,\gamma_2)$ to $\big( F_\alpha(C_0,J_{\infty}), \tilde{\alpha}_\infty \big)$. Combining \ref{ann-sigma}, \ref{F(A)-alternativ} and \ref{strong lift}\ref{strong-lift:2}, we see that the equivariant map
\[
J_{\infty,\alpha}\cap C'\to F_\alpha(C_0,J_\infty)
\]
is surjective and strongly locally semi-split.
So there is an equivariant c.p.c.~order zero map $\psi_2': (D_2,\gamma_2)\to \big( J_{\infty,\alpha}\cap C_0', \alpha_\infty \big)$ satisfying $\psi_2'(\eins)c=c$ for all $c\in C_0$. Consider the equivariant c.p.c.~order zero map $\psi_2: (D_2,\gamma_2)\to (J_{\infty,\alpha},\alpha_\infty)$ given by $\psi_2(x)=h_1\psi_2'(x)$ for all $x\in D_2$. By design, its image commutes with all of $C$, so both with $A$ and the image of $\psi_1$. Observe now that for every $a\in A$, we have
\[
\begin{array}{ccl}
\big( \psi_1(\eins)+\psi_2(\eins) \big)a &=& \big( (\eins-h_1)\psi(\eins)+h_1 \big) a \\
&=& \big( (\eins-h_1)e_0+h_1 \big)a ~=~ a.
\end{array}
\]
In particular, the two resulting equivariant c.p.c.~order zero maps
\[
\tilde{\psi}_i: (D_i, \gamma_i) \to \big( F_{\infty,\alpha}(A), \tilde{\alpha}_\infty \big),\quad \tilde{\psi}_i(x)=\psi_i(x)+\ann(A,A_\infty),\quad i=1,2
\]
have commuting ranges and satisfy $\tilde{\psi}_1(\eins)+\tilde{\psi}_2(\eins)=\eins$. By \ref{G-universal}, this induces a unital and equivariant $*$-homomorphism 
\[
\Psi: \big( \CE(D_1,D_2), \CE(\gamma_1,\gamma_2) \big) \to \big( F_{\infty,\alpha}(A), \tilde{\alpha}_\infty \big)
\]
with the desired property.
\end{proof}

\begin{nota}
For the rest of this section, let us abbreviate $D^{(2)}=\CE(D,D)$ and $\gamma^{(2)}=\CE(\gamma,\gamma)$ for a unital \cstar-dynamical system $(D,\gamma)$.
\end{nota}

Here comes the main result of this section:

\begin{theorem}[cf.~{\cite[4.5]{Kirchberg04} and \cite[4.3]{TomsWinter07}}] 
\label{ext}
Let $G$ be a second-countable, locally compact group. Let $\CD$ be a separable, unital \cstar-algebra and $\gamma: G\curvearrowright\CD$ a semi-strongly self-absorbing action. The following are equivalent:
\begin{enumerate}[label=\textup{(\roman*)},leftmargin=*]
\item The class of all separable, $\gamma$-absorbing $G$-\cstar-dynamical systems is closed under equivariant extensions. \label{ext:1}
\item The action $\gamma^{(2)}: G\curvearrowright\CD^{(2)}$ is $\gamma$-absorbing. \label{ext:2}
\item $\gamma$ is unitarily regular. (In fact, any $\delta<\eps/4$ can be inserted in \ref{unireg}.) \label{ext:3}
\item $\gamma$ has strongly asymptotically $G$-inner half-flip. \label{ext:4}
\item There exists a unital and equivariant $*$-homomorphism from $(\CD,\gamma)$ to $\big( \CD^{(2)}_{\infty,\gamma^{(2)}}, \gamma^{(2)}_\infty \big)$. \label{ext:5}
\end{enumerate}
\end{theorem}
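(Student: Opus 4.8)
The plan is to prove the five conditions equivalent by establishing the chain
$\text{(i)}\Leftrightarrow\text{(ii)}\Leftrightarrow\text{(v)}\Leftrightarrow\text{(iv)}\Leftrightarrow\text{(iii)}$. Conditions (i) and (ii) form a permanence/absorption cluster governed by the CE-lemma \ref{ce-lemma} and the equivariant McDuff theorem \ref{equi-McDuff}, while (iii) and (iv) form a unitary cluster governed by the strong uniqueness theorem \ref{asue} and a commutator homotopy argument. Condition (v) serves as the bridge: it is the sequence-algebra shadow of the half-flip homotopy that $\CE(\CD,\CD)$ was designed to encode.

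For (i)$\Rightarrow$(ii) I would realize $(\CD^{(2)},\gamma^{(2)})$ as an iterated equivariant extension of $\gamma$-absorbing systems. The evaluation $\ev_0$ exhibits $(\CD,\gamma)$ as a quotient with kernel $J_0=\set{f\mid f(0)=0}$, and $\ev_1$ in turn exhibits $(\CD,\gamma)$ as a quotient of $J_0$ with kernel the suspension $\big(S(\CD\otimes\CD),\id\otimes\gamma\otimes\gamma\big)$. Since $\gamma\cc\gamma\otimes\gamma$ forces $\gamma\otimes\gamma\cc\gamma\otimes\gamma\otimes\gamma$, tensoring this cocycle conjugacy with $\CC_0(\IR)$ shows the suspension is $\gamma$-absorbing, and two applications of (i) then give (ii). For (ii)$\Rightarrow$(i), given an extension $0\to(J,\alpha)\to(A,\alpha)\to(B,\beta)\to 0$ with $J$ and $B$ both $\gamma$-absorbing, \ref{equi-McDuff} supplies unital equivariant maps into $F_{\infty,\beta}(B)$ and $F_{\infty,\alpha}(J)$; feeding these into \ref{ce-lemma} produces a unital equivariant $\Psi\colon(\CD^{(2)},\gamma^{(2)})\to F_{\infty,\alpha}(A)$. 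Precomposing with the map $(\CD,\gamma)\to F_{\infty,\gamma^{(2)}}(\CD^{(2)})$ furnished by (ii), together with a reindexation turning $\Psi$ into an induced map $F_{\infty,\gamma^{(2)}}(\CD^{(2)})\to F_{\infty,\alpha}(A)$, yields a unital equivariant map $(\CD,\gamma)\to F_{\infty,\alpha}(A)$, so $A$ is $\gamma$-absorbing by \ref{equi-McDuff}. Correctly assembling the CE-lemma output with this reindexation is the \textbf{main obstacle} of the whole proof.

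For (iii)$\Rightarrow$(iv) I would apply \ref{asue} to the two half-flip embeddings $(\CD,\gamma)\to(\CD\otimes\CD,\gamma\otimes\gamma)$: its hypotheses hold because $\gamma\otimes\gamma\scc\gamma\otimes\gamma\otimes\gamma$ and $\gamma$ is unitarily regular, and its conclusion is exactly the strong asymptotic $G$-innerness of the half-flip. For (iv)$\Rightarrow$(iii) I would run the classical commutator argument equivariantly: given $u,v\in\CU(\CD^\gamma_{\delta,K})$, the strong half-flip path $w$ yields a continuous family $v_t$ connecting $v\otimes\eins$ to $\eins\otimes v$ through approximately fixed unitaries, and $t\mapsto(u\otimes\eins)\,v_t\,(u^*\otimes\eins)\,v_t^*$ connects $uvu^*v^*\otimes\eins$ to $(u\otimes\eins)(\eins\otimes v)(u^*\otimes\eins)(\eins\otimes v^*)=\eins$. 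Pulling this path back along an approximate left inverse $\CD\otimes\CD\to\CD$ from \ref{approx-inverse} places $uvu^*v^*$ in $\CU_0(\CD^\gamma_{\eps,K})$; since the path is a four-fold product of $\delta$-fixed unitaries, tracking constants yields precisely $\delta<\eps/4$. Note this genuinely needs the strong path from (iv), not merely the approximately $G$-inner half-flip \ref{aGi-hf} that every semi-strongly self-absorbing action already has.

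It remains to cross the two clusters via the bridges (iv)$\Leftrightarrow$(v) and (v)$\Leftrightarrow$(ii). For (iv)$\Rightarrow$(v) I would reparametrize the half-flip path: for each $n$ the assignment $d\mapsto\big[s\mapsto w(\rho_n(s))(d\otimes\eins)w(\rho_n(s))^*\big]$, corrected near $s=1$ so as to land exactly in $\eins\otimes\CD$, is an element of $\CE(\CD,\CD)$, and the sequence of these assembles to a unital equivariant $*$-homomorphism into $\CD^{(2)}_{\infty,\gamma^{(2)}}$, with equivariance coming from \eqref{sasGue:e3} and \eqref{sasGue:e4}. Conversely (v)$\Rightarrow$(iv) reads off the endpoints $\ev_0,\ev_1$ of such a map, normalizes them to the standard embedding via \ref{approx-uniqueness}, and straightens the resulting homotopy of half-flip embeddings into an honest unitary path using \ref{dw-lemma}. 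Finally, for (v)$\Leftrightarrow$(ii) I would exploit that $\CD$ is (semi-)strongly self-absorbing to pass between a map into the sequence algebra $\CD^{(2)}_{\infty,\gamma^{(2)}}$ and a map into the relative central sequence algebra $F_{\infty,\gamma^{(2)}}(\CD^{(2)})$, the latter being the McDuff criterion \ref{equi-McDuff} for (ii); this is a reindexation and lifting argument in which the relevant annihilators are $G$-$\sigma$-ideals, so the equivariant order-zero lifts of \ref{strong lift} can be upgraded to $*$-homomorphisms against the self-absorbing structure of $\CD$.
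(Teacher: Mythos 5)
Your toolbox is the right one, and several legs of your proposal are sound: (i)$\Rightarrow$(ii) via the extension structure of $\CD^{(2)}$, (iii)$\Rightarrow$(iv) via Theorem \ref{asue}, and (iv)$\Rightarrow$(v) via reparametrizing the half-flip path all match the paper, and your direct commutator argument for (iv)$\Rightarrow$(iii) is correct and even recovers the constant $\delta<\eps/4$ (the paper instead proves (ii)$\Rightarrow$(iii) by essentially the same computation, sourced from a genuine, approximately equivariant unital $*$-homomorphism $\psi\colon\CD\to\CD^{(2)}$ which condition (ii) supplies through \ref{approx-inverse}). The genuine gap is that neither of your proposed exits from condition (v) works, and your architecture needs at least one of them: without a valid edge from the cluster $\{$(iii),(iv),(v)$\}$ back into $\{$(i),(ii)$\}$, you have shown (i)$\Leftrightarrow$(ii), (iii)$\Leftrightarrow$(iv), and that both imply (v) --- but not the equivalence of the five conditions.

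Concretely, for (v)$\Rightarrow$(ii): passing from a unital map into $\CD^{(2)}_{\infty,\gamma^{(2)}}$ to one into $F_{\infty,\gamma^{(2)}}(\CD^{(2)})$ by ``reindexation and lifting'' is not a valid principle. Reindexation preserves centrality relative to separable subalgebras but cannot create centrality relative to $\CD^{(2)}$ itself; already non-equivariantly, a unital embedding of $\CD$ into $B_\infty$ does not imply $\CD$-stability of $B$. The centrality has to come from the extension structure of $\CD^{(2)}$: the paper's proof of (v)$\Rightarrow$(i) feeds a $\gamma$-absorbing ideal and quotient into Lemma \ref{ce-lemma} (this is where the $G$-$\sigma$-ideal machinery of \ref{strong lift} actually enters), then composes the resulting map $\CD^{(2)}\to F_{\infty,\alpha}(A)$ \emph{componentwise} with the map from (v), landing in $\big(F_{\infty,\alpha}(A)\big)_{\infty,\tilde\alpha_\infty}$, and resolves the double sequence algebra via the multiplication map $\theta$ and \cite[3.6, 4.2]{BarlakSzabo15}. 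Note that this composition is canonical precisely because the input from (v) lands in the sequence algebra; by contrast, the ``induced map $F_{\infty,\gamma^{(2)}}(\CD^{(2)})\to F_{\infty,\alpha}(A)$'' posited in your (ii)$\Rightarrow$(i) does not exist as stated, though that step can be repaired the same way. For (v)$\Rightarrow$(iv): the homotopy you want to read off does not exist at the sequence-algebra level, since an element of $\CD^{(2)}_{\infty,\gamma^{(2)}}$ is a class of sequences of paths with no equicontinuity, so $t\mapsto\ev_t(x)$ is in general not norm-continuous into $(\CD\otimes\CD)_\infty$; and extracting finite-level homomorphisms $\CD\to\CD^{(2)}$ to which \ref{approx-uniqueness} and \ref{dw-lemma} could be applied requires a lifting result like \ref{lift-from-D}, which is only available when the target system already absorbs $\gamma$ --- i.e., condition (ii), the very thing you are trying to reach. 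The repair is the paper's cycle (i)$\Rightarrow$(ii)$\Rightarrow$(iii)$\Rightarrow$(iv)$\Rightarrow$(v)$\Rightarrow$(i): run your commutator argument with the finite-level homomorphism $\CD\to\CD^{(2)}$ that (ii) provides (this gives (ii)$\Rightarrow$(iii)), and close the loop with the CE-lemma composition just described as (v)$\Rightarrow$(i).
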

\begin{proof}
\ref{ext:1}$\implies$\ref{ext:2}: Since we have a canonical equivariant short exact sequence of the form
\[
\xymatrix{
0 \ar[r] & \big( C_0(0,1)\otimes\CD, \id\otimes\gamma \big) \ar[r] & (\CD^{(2)},\gamma^{(2)}) \ar[r]^{\ev_0\oplus\ev_1} & (\CD\oplus\CD,\gamma\oplus\gamma) \ar[r] & 0, 
}
\]
this implication is trivial.

\ref{ext:2}$\implies$\ref{ext:3}: Let $\eps>0$ and $K\subset G$ be a compact set. Let $0<\delta_0<\eps/4$ be some number and $u,v\in\CU(\CD^\gamma_{\delta_0,K})$ two unitaries. Let $\eta>0$ be some number to be specified later. Apply \ref{approx-uniqueness} and choose a pair $(\delta, K_1)$ for $(A,\alpha)=(\CD,\gamma)$ and the triple $(\eta, K, F)$, where $F=\set{uvu^*v^*}$. Without loss of generality, we may assume $K\subset K_1$ and $\delta\leq\eta<1$. Since $\gamma^{(2)}$ is $\gamma$-absorbing, we can find\footnote{The existence follows, for example, from composing a $*$-homomorphism as in \ref{approx-inverse} with the second-factor embedding of $\CD$ into $\CD^{(2)}\otimes\CD$.} a $(\delta,K_1)$-approximately equivariant $*$-homomorphism $\psi: \CD\to\CD^{(2)}$. Since $\CD^{(2)}\subset\CC\big([0,1],\CD\otimes\CD\big)$ canonically, let us represent $\psi$ by a continuous family of $*$-homomorphisms $\psi_t: \CD\to\CD\otimes\CD$ for $0\leq t\leq 1$ with $\psi_0(\CD)\subset\CD\otimes\eins$ and $\psi_1(\CD)\subset\eins\otimes\CD$. Then $\psi_t$ is also $(\delta, K_1)$-approximately equivariant for every $0\leq t\leq 1$. By the choice of $(\delta, K_1)$, it follows that there exists $w\in\CU(\CD^\gamma_{\eta,K})$ with $\ad(w\otimes\eins)\circ\psi_0(uvu^*v^*)=_\eta uvu^*v^*\otimes\eins$. By replacing each map $\psi_t$ by $\ad(w\otimes\eins)\circ\psi_t$, we get that $\psi$ is $(3\eta, K)$-approximately equivariant and satisfies $\psi_0(uvu^*v^*)=_\eta uvu^*v^*\otimes\eins$. Note that each $*$-homomorphism $\psi_t: \CD\to\CD\otimes\CD$ must then also be $(3\eta, K)$-approximately equivariant and we still have $\psi_0(\CD)\subset\CD\otimes\eins$ and $\psi_1(\CD)\subset\eins\otimes\CD$. 

Consider the unitary path $z_t: [0,1]\to\CU(\CD\otimes\CD)$ given by $z(t)=\psi_0(u)\psi_{1-t}(v)\psi_0(u^*)\psi_{1-t}(v^*)$. Then $z(0)=\eins$ and $z(1)=\psi_0(uvu^*v^*)$. Moreover, since each $\psi_t$ is $(3\eta, K)$-approximately equivariant and $u,v\in\CU(\CD^\gamma_{\delta_0,K})$, it follows that $\psi_t(u), \psi_t(v)\in \CU\big( (\CD\otimes\CD)^{(\gamma\otimes\gamma)}_{\delta_0+3\eta,K} \big)$ for every $0\leq t\leq 1$. So we have $z(t)\in\CU\big( (\CD\otimes\CD)^{(\gamma\otimes\gamma)}_{4\delta_0+12\eta,K} \big)$ for all $0\leq t\leq 1$, since it is a product of four such elements. Now apply \ref{approx-inverse} and find a unital and $(\eta, K)$-approximately equivariant $*$-homomorphism $\kappa: \CD\otimes\CD\to \CD$ satisfying $\kappa(uvu^*v^*\otimes\eins)=_\eta uvu^*v^*$. Then the unitary path $y=\kappa\circ z: [0,1]\to\CU(\CD)$ satisfies $y(t)\in\CU(\CD^\gamma_{4\eps+13\eta,K})$ for all $0\leq t\leq 1$, and moreover
\[
y(0)=\eins,\quad y(1)=\kappa(\psi_0(uvu^*v^*)) =_\eta \kappa(uvu^*v^*) =_\eta uvu^*v^*.
\]
It follows that $uvu^*v^*$ is $2\eta$-close to the unitary $y(1)\in\CU_0(\CD^\gamma_{4\delta_0+13\eta,K})$. Since $\eta<1$, it follows by \ref{rem:1-unitaries}\ref{1-unitaries:3} that $uvu^*v^*\in\CU_0(\CD^\gamma_{4\delta_0+17\eta, K})$. Since $\eta$ may be chosen independently of $\delta_0$, we may choose $\eta\leq\frac{\eps-4\delta_0}{17}$, and then it follows that $4\delta_0+17\eta\leq\eps$. This shows our claim.

\ref{ext:3}$\implies$\ref{ext:4}: This follows directly from \ref{asue}.

\ref{ext:4}$\implies$\ref{ext:5}: Pick sequences $\eps_n\searrow 0$ and $K_n\nearrow G$. As $(\CD,\gamma)$ has strongly asymptotically $G$-inner half-flip, there exist in particular unitary paths $w_n: [0,1)\to\CU\big( (\CD\otimes\CD)^{(\gamma\otimes\gamma)}_{\eps_n/2, K_n} \big)$ with $w_n(0)=\eins$ and such that
\[
\lim_{t\to 1} w_n(t)(x\otimes\eins)w_n(t)^* = \eins\otimes x\quad\text{for all}~x\in\CD~\text{and}~n\in\IN.
\]
Consider the sequence of $*$-homomorphisms
\[
\psi_n: \CD\to\CC\big([0,1], \CD\otimes\CD \big),\quad \psi_n(x)(t) = \begin{cases} w_n(t)(x\otimes\eins)w_n(t)^* &,\quad 0\leq t<1 \\ \eins\otimes x &,\quad t=1. \end{cases}
\] 
Then by choice of the paths $w_n$, each $*$-homomorphism $\psi_n$ has image in $\CD^{(2)}$ and is $(\eps_n, K_n)$-approximately equivariant with respect to $\gamma$ and $\gamma^{(2)}$. By the choice of $\eps_n$ and $K_n$, this implies the existence of a unital and equivariant $*$-homomorphism from $(\CD,\gamma)$ to $\big( \CD^{(2)}_{\infty,\gamma^{(2)}}, \gamma^{(2)}_\infty \big)$.

\ref{ext:5}$\implies$\ref{ext:1}: Let
\[
\xymatrix{
0 \ar[r]& (J,\alpha) \ar[r] &  (A,\alpha) \ar[r]^\pi &  (B,\beta) \ar[r] & 0
}
\]
be a short exact sequence of separable $G$-\cstar-dynamical systems with $\alpha|_J\cc \alpha|_J\otimes\gamma$ and $\beta\cc\beta\otimes\gamma$. Then there exist unital and equivariant $*$-homomorphisms
\[
(\CD,\gamma)\to \big( F_{\infty,\beta}(B), \tilde{\beta}_\infty \big) \quad\text{and}\quad (\CD,\gamma)\to \big( F_{\infty,\alpha}(J), \tilde{\alpha}_\infty \big).
\] 
By \ref{ce-lemma}, it follows that there exists a unital and equivariant $*$-homomorphism from $(\CD^{(2)},\gamma^{(2)})$ to $\big( F_{\infty,\alpha}(A), \tilde{\alpha}_\infty \big)$. By condition \ref{ext:5}, we can form sequence algebras on both sides and obtain a unital and equivariant $*$-homomorphism $\psi: (\CD,\gamma)\to\Big( \big( F_{\infty,\alpha}(A)\big)_{\infty,\tilde{\alpha}_\infty}, (\tilde{\alpha}_\infty)_\infty \Big)$. 

Note that we have a map $A \otimes_{\max} F_{\infty,\alpha}(A)_{\infty,\tilde{\alpha}_\infty} \to \big( A \otimes_{\max} F_{\infty,\alpha}(A) \big)_{\infty, (\alpha\otimes\tilde{\alpha}_\infty)}$ in a natural way. Denote by $\theta: A \otimes_{\max} F_{\infty,\alpha}(A) \to A_{\infty,\alpha}$ the (equivariant) $*$-homomorphism induced by multiplication. We can use this to obtain a commutative diagram of the form
\[
\xymatrix{
A \ar[rrrr] \ar[rd]_{\id_A\otimes \eins} & & & & (A_{\infty,\alpha})_{\infty,\alpha_\infty} \\
	 & A\otimes \CD  \ar[rd]_{\id_A\otimes\psi} \ar@{-->}[rrru]& & & \\
	 & & \big( A \otimes_{\max} F_{\infty,\alpha}(A) \big)_{\infty, (\alpha\otimes\tilde{\alpha}_\infty)} \ar[rruu]_{\theta_\infty} & &
}
\]
Note that all the $*$-homomorphisms in this diagram are equivariant with respect to the obvious actions. Applying \cite[3.6, 4.2]{BarlakSzabo15}, it follows that there is also a unital and equivariant $*$-homomorphism from $(\CD,\gamma)$ to $\big( F_{\infty,\alpha}(A), \tilde{\alpha}_\infty \big)$. Thus $\alpha\cc\alpha\otimes\gamma$ and this finishes the proof.
\end{proof}


\section{Concluding remarks}

\noindent
In view of \ref{ext}, let us ask:

\begin{question} \label{Q-K1}
Is every semi-strongly self-absorbing action automatically unitarily regular? Is already one of conditions \ref{jiang-su-k1}\ref{jiang-su-k1:1} or \ref{jiang-su-k1}\ref{jiang-su-k1:2} automatic for every semi-strongly self-absorbing action? 
\end{question}

As every equivariantly $\CZ$-stable action satisfies all of these properties, we shall treat some (non-)examples of equivariantly $\CZ$-stable, semi-strongly self-absorbing actions and shed light on the difficulty of this question. First, we shall discuss a case where equivariant $\CZ$-stability is indeed automatic:

\begin{rem} 
Let $G$ be a countable, amenable group. Let $\CD$ be a separable, unital \cstar-algebra and $\gamma: G\curvearrowright\CD$ a semi-strongly self-absorbing action. If $\CD$ is infinite, then $\CD$ is a Kirchberg algebra. It is shown in \cite{Szabo16_K} that all $G$-actions on Kirchberg algebras are equivariantly $\CO_\infty$-absorbing, so this is in particular the case for $\gamma$. 

If $\CD$ is finite, then it has unique trace. As $\gamma$ is semi-strongly self-absorbing, we have that for all $g\in G$, the automorphism $\gamma_g\in\Aut(\CD)$ is either trivial or strongly outer. If $N=\ker(\gamma)$ is the kernel of $\gamma$, then we get a well-defined action $\bar{\gamma}: G/N\curvearrowright\CD,~ \bar{\gamma}_{gN}=\gamma_g$ induced by $\gamma$, which is pointwise strongly outer. If $G$ has property (Q) in Matui-Sato's sense \cite[2.4]{MatuiSato14}, then so does $G/N$, and thus $\bar{\gamma}\cc\bar{\gamma}\otimes\id_\CZ$ by \cite[4.10]{MatuiSato14}. But then it follows trivially that $\gamma$ was equivariantly $\CZ$-stable to begin with.

Now every Weiss-tileable group has property (Q) by \cite[5.10]{Wang14} and there is no example of a countable, amenable group that is known not to be Weiss-tileable. In particular, we do not know of any example of a semi-strongly self-absorbing action of a countable, amenable group that is not equivariantly $\CZ$-stable. As we will see further below, the situation is rather different for non-discrete acting groups, where such examples do exist.
\end{rem}

The following elementary construction yields the existence of faithful and strongly self-absorbing actions of large classes of locally compact groups on strongly self-absorbing \cstar-algebras. Although this was hinted at in \cite[Section 5]{Szabo15ssa}, it was not made explicit.

\begin{prop}
Let $D$ be a separable, unital \cstar-algebra with approximately inner flip. Let $G$ be a second-countable, locally compact group and $u: G\to\CU(\CD)$ a continuous unitary representation. Then the action
\[
\bigotimes_\IN\ad(u): G\curvearrowright\bigotimes_\IN D
\]
is strongly self-absorbing.
\end{prop}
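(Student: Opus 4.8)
The plan is to verify the definition of a strongly self-absorbing action directly for $A:=\bigotimes_\IN D$ equipped with $\gamma:=\bigotimes_\IN\ad(u)$. Since a separable, unital \cstar-algebra with approximately inner flip is automatically simple and nuclear, the infinite tensor product $A$ and the diagonal action $\gamma$ are unambiguously defined, and $A$ is again separable and unital. A bijection $\IN\sqcup\IN\cong\IN$ induces an isomorphism $\Phi:(A\otimes A,\gamma\otimes\gamma)\to(A,\gamma)$; as the product action depends only on the index set up to bijection and uses the same factorwise automorphism $\ad(u)$, this $\Phi$ is equivariant. It therefore suffices to exhibit an equivariant isomorphism to which the first-factor embedding $\id_A\otimes\eins_A$ is approximately $G$-unitarily equivalent, and I would take $\Phi^{-1}$. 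Transporting along the equivariant $\Phi$, the whole task reduces to showing that the equivariant endomorphism $\lambda:=\Phi\circ(\id_A\otimes\eins_A):(A,\gamma)\to(A,\gamma)$, which spreads the $k$-th tensor factor into position $2k-1$ and fills the even positions with $\eins$, is approximately $G$-unitarily equivalent to $\id_A$.

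The crucial point, and the only place where the specific structure of $\gamma$ enters, is that the flip $\sigma$ on $(D\otimes D,\ad(u)\otimes\ad(u))$ is approximately $G$-inner. I would choose unitaries $v_k\in\CU(D\otimes D)$ with $\ad(v_k)\to\sigma$ in point-norm, which exist because $D$ has approximately inner flip. Since $\sigma$ fixes every symmetric element, $\sigma(u_g\otimes u_g)=u_g\otimes u_g$; and as the automorphisms $\ad(v_k)$ are isometric, their point-norm convergence to $\sigma$ is uniform on the compact set $\set{u_g\otimes u_g \mid g\in K}$ (note $g\mapsto u_g$ is norm continuous, as strict and norm continuity coincide on the unitary group of a unital \cstar-algebra). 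Evaluating at these elements gives
\[
\sup_{g\in K}~\|[v_k,\,u_g\otimes u_g]\| \stackrel{k\to\infty}{\longrightarrow} 0,
\]
which is precisely the assertion that $v_k$ is approximately fixed by $\ad(u_g)\otimes\ad(u_g)$ uniformly on $K$. Thus the $v_k$ simultaneously implement the flip \emph{and} are approximately $\gamma$-fixed. This is exactly where the inner structure of $\gamma$ is indispensable: for a general action there is no reason why flip-implementing unitaries should be approximately fixed, whereas here it falls out of the fact that the flip fixes the implementing unitaries $u_g\otimes u_g$.

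From here I would propagate the statement to $A$: inserting $v_k$ into two adjacent tensor coordinates realizes the flip of those factors as an approximately $G$-inner automorphism of $A$, the implementing unitary being approximately $\gamma$-fixed by the estimate above (it commutes exactly with $\ad(u)$ in every other coordinate). Since a product of approximately $G$-inner automorphisms is again approximately $G$-inner — products of approximately fixed unitaries stay approximately fixed, cf.\ \ref{rem:1-unitaries}\ref{1-unitaries:1} — and adjacent transpositions generate all finite permutations of the tensor factors, every finite factor permutation of $A$ is approximately $G$-inner. Given $\eps>0$, a compact set $K\subset G$ and a finite set $F\fin A$ supported in the first $m$ factors, a suitable de-interleaving permutation of the first $2m-1$ factors carries $\lambda(x)$ back to $x$ for all $x\in F$; implementing it by a $\gamma$-almost-fixed unitary $W\in\CU(A)$ yields $\ad(W)(\lambda(x))=_\eps x$ for $x\in F$ while $\max_{g\in K}\|\gamma_g(W)-W\|\leq\eps$. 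A standard diagonal sequence over an exhaustion of $A$ by finite sets, of $G$ by compact sets, and with tolerances tending to zero then produces unitaries witnessing that $\lambda$ is approximately $G$-unitarily equivalent to $\id_A$; conjugating these by $\Phi^{-1}$ (which preserves approximate $G$-fixedness, $\Phi$ being equivariant) shows that $\id_A\otimes\eins_A$ is approximately $G$-unitarily equivalent to the isomorphism $\Phi^{-1}$, so $\gamma$ is strongly self-absorbing. I expect the main obstacle to be conceptual rather than computational: one must resist implementing the full flip of $A\otimes A$ by an infinite product of the $v_k$'s (where the approximation errors accumulate uncontrollably) and instead handle each finite set $F$ with a finite factor permutation, the equivariance of whose implementer is guaranteed by the flip-fixedness computation above.
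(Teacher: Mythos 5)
Your proposal is correct, and its crucial step --- the observation that unitaries $v_k$ implementing the flip on $D\otimes D$ are automatically approximately fixed by $\ad(u_g)\otimes\ad(u_g)$, uniformly on compact sets, because the flip fixes the symmetric unitaries $u_g\otimes u_g$ --- is precisely the paper's proof. Where you diverge is in what is done with this fact: the paper stops there, concluding that $(D,\ad(u))$ has approximately $G$-inner flip, and then simply invokes the prequel's result \cite[3.3]{Szabo15ssa}, which says that the infinite tensor product of a system with approximately $G$-inner flip is a strongly self-absorbing action. You instead reprove that implication from scratch in this special case: the interleaving isomorphism $\Phi$, the reduction to showing that $\lambda=\Phi\circ(\id_A\otimes\eins_A)$ is approximately $G$-unitarily equivalent to $\id_A$, and the realization of the de-interleaving permutation on finitely supported elements as a product of adjacent transpositions, each implemented by approximately $\gamma$-fixed unitaries. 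That unpacking is sound --- compositions of approximately $G$-inner automorphisms are approximately $G$-inner by \ref{rem:1-unitaries}\ref{1-unitaries:1} and a standard telescoping estimate, and density of finitely supported elements plus a diagonal argument over exhaustions finishes it --- and it buys self-containedness: your argument does not lean on the prequel's tensor-product machinery. What it costs is length and generality: the paper's proof is two lines precisely because the tensor-product mechanism was established once and for all in \cite{Szabo15ssa}, so only the genuinely new input, the flip-fixedness computation, needs to be recorded here; your route essentially re-derives that general mechanism inside the proof.
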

\begin{proof}
Let $v_n\in D\otimes D$ be a sequence of unitaries approximately implementing the flip automorphism. Given a compact set $K\subset G$, the subset $\set{u_g\otimes u_g \mid g\in K}\subset\CD\otimes\CD$ is also compact, and thus
\[
\max_{g\in K}~ \|\ad(u_g\otimes u_g)(v_n)-v_n\| = \max_{g\in K}~\|v_n(u_g\otimes u_g)v_n^*-u_g\otimes u_g\| \stackrel{n\to\infty}{\longrightarrow} 0.
\]
This shows that the system $(D,\ad(u))$ has approximately $G$-inner flip. The infinite tensor product of this system is then strongly self-absorbing by \cite[3.3]{Szabo15ssa}.
\end{proof}

\begin{example}
The circle action
\[
\gamma:=\bigotimes_\IN\ad\matrix{1 & 0 \\ 0 & z}: \IT\curvearrowright \bigotimes_\IN M_2 = M_{2^\infty}
\]
is strongly self-absorbing by the above, but not equivariantly $\CZ$-stable. If it were, then the fixed-point algebra $(M_{2^\infty})^\gamma$ would need to be $\CZ$-stable, as it embeds into the crossed product as a corner. However, this fixed-point algebra is known as the GICAR algebra (see \cite[Section 5]{Bratteli72}) and is well-known to have characters. On the other hand, the GICAR algebra is AF, and so all unitaries in the fixed point algebra $\big( (M_{2^\infty})_{\infty,\gamma}\big)^{\gamma_\infty} = \big( (M_{2^\infty})^\gamma \big)_\infty$ are homotopic to $\eins$. This fixed-point algebra is thus $K_1$-injective. In particular, $\gamma$ is unitarily regular by \ref{jiang-su-k1}, but not equivariantly $\CZ$-stable. 
\end{example}

\begin{rem}
Considering the above example, it stands to reason that Question \ref{Q-K1} might be very difficult in general. In applications, it is usually unclear how to verify $K_1$-injectivity for a \cstar-algebra (or its sequence algebra) without appealing to $\CZ$-stability or pure infiniteness. Already in the classical theory, nobody has (so far) provided a direct proof of the fact that strongly self-absorbing \cstar-algebras are $K_1$-injective. The only known proof makes use of Winter's result \cite{Winter11} that they are in fact $\CZ$-stable. The same is true for the fact that in every strongly self-absorbing \cstar-algebra $\CD$, the commutator subgroup of $\CU(\CD)$ is in $\CU_0(\CD)$. Given the above example, trying to appeal automatically to equivariant $\CZ$-stability is no suitable approach to solve Question \ref{Q-K1} for (semi-)strongly self-absorbing actions in general.
\end{rem}

\begin{rem}
Let us observe that \ref{ce-lemma} can be used to easily obtain extension results about certain invariants of group actions, such as Rokhlin dimension. So let $G$ be a second-countable, locally compact group and $\alpha: G\curvearrowright A$ an action on a separable \cstar-algebra. Let $J\subset A$ be an $\alpha$-invariant ideal and $B=A/J$ the quotient, with induced action $\beta$. Let $\gamma: G\curvearrowright D$ be an action on a separable, unital \cstar-algebra.

Suppose that there exist equivariant c.p.c.\ order zero maps
\[
\phi_0,\dots,\phi_{d_1}: (D,\gamma) \to \big( F_{\infty,\alpha}(J), \tilde{\alpha}_\infty \big)
\]
and
\[
\psi_0,\dots,\psi_{d_2}: (D,\gamma) \to \big( F_{\infty,\beta}(B), \tilde{\beta}_\infty \big)
\]
satisfying
\[
\eins=\phi_0(\eins)+\dots+\phi_{d_1}(\eins)\quad\text{and}\quad\eins=\psi_0(\eins)+\dots+\psi_{d_1}(\eins).
\]
Applying \ref{ce-lemma} and \ref{G-universal} to 
\[
D_1=\mathrm{C}^*(\phi_i(D) \mid 0\leq i\leq d_1) \quad\text{and}\quad D_2=\mathrm{C}^*(\psi_i(D) \mid 0\leq i\leq d_2)
\] 
with the obvious actions, it follows that there exist equivariant c.p.c.\ order zero maps
\[
\kappa_0,\dots,\kappa_{1+d_1+d_2}: (D,\gamma)\to \big( F_{\infty,\alpha}(A), \tilde{\alpha}_\infty \big)
\]
with
\[
\eins=\kappa_0(\eins)+\dots+\kappa_{1+d_1+d_2}(\eins).
\]
Moreover, if the collections $\set{\phi_i}_i$ and $\set{\psi_i}_i$ have pairwise commuting ranges, then we may require that the maps $\set{\kappa_i}_i$ have pairwise commuting ranges as well.
Let us list some special cases and implications of this observation:
\begin{enumerate}[label={(\roman*)},leftmargin=*]
\item Let $H\subset G$ be a closed and cocompact subgroup. Consider $D=\CC(G/H)$ equipped with the canonical $G$-shift. Then we obtain
\[
\dimrok(\alpha, H) \leq \dimrok(\alpha|_J, H)+\dimrok(\beta,H)+1
\] 
and
\[
\dimrokc(\alpha, H) \leq \dimrokc(\alpha|_J, H)+\dimrokc(\beta,H)+1,
\]
where $\dimrok(\alpha, H)$ denotes the Rokhlin dimension of $\alpha$ relative to $H$, as defined in \cite[Section 5]{HirshbergSzaboWinterWu16}. The expression $\dimrokc(\alpha, H)$ denotes the analogous notion where one requires commuting towers.
\item If $G=\IR$, then this implies
\[
\dimrok(\alpha) \leq \dimrok(\alpha|_J)+\dimrok(\beta)+1
\] 
and
\[
\dimrokc(\alpha) \leq \dimrokc(\alpha|_J)+\dimrokc(\beta)+1,
\] 
where these values denote the Rokhlin dimension of $\alpha$ as a flow, with or without commuting towers, in the sense of \cite[Section 2+5]{HirshbergSzaboWinterWu16}.
\item If $G$ is discrete and residually finite, then we get
\[
\dimrok(\alpha) \leq \dimrok(\alpha|_J)+\dimrok(\beta)+1
\] 
and
\[
\dimrokc(\alpha) \leq \dimrokc(\alpha|_J)+\dimrokc(\beta)+1,
\] 
where these values denote the Rokhlin dimension of $\alpha$, with or without commuting towers, in the sense of \cite[Section 4+9]{SzaboWuZacharias15}. This recovers and generalizes an extension result for Rokhlin dimension of finite group and integer actions \cite[2.10, 3.5]{HirshbergPhillips15} due to Hirshberg-Phillips.
\item Assume that $G$ is compact. Let $\theta: G\curvearrowright X$ be a continuous action on a metrizable, compact space. Suppose that both $\alpha|_J$ and $\beta$ have the $(X,\theta)$-Rokhlin property in the sense of \cite[1.6]{HirshbergPhillips15}. (Note that although this definition required unitality of the underlying \cstar-algebra, we can extend it by using the corrected central sequence algebra, if the underlying \cstar-algebra is non-unital.) Then it follows from \ref{join} that $\alpha$ has the $(X\star X,\theta\star\theta)$-Rokhlin property, where $X\star X$ is the join of $X$ with itself and $\theta\star\theta$ is the diagonal action induced by $\theta$. If one extends the definition of the $(X,\theta)$-Rokhlin property further to include locally compact group actions (the original definition \cite[1.6]{HirshbergPhillips15} of Hirshberg-Phillips assumes compactness of $G$), then the same statement remains true in that context.
\end{enumerate}
\end{rem}


\bibliographystyle{gabor}
\bibliography{master}

\end{document}